\theoremstyle{plain}
\newtheorem{theorem}{Theorem}[section]
\newtheorem{lemma}[theorem]{Lemma}
\newtheorem{definition-theorem}[theorem]{Definition-Theorem}
\newtheorem{proposition}[theorem]{Proposition}
\newtheorem{corollary}[theorem]{Corollary}
\theoremstyle{definition}
\newtheorem{definition}[theorem]{Definition}
\newtheorem{example}[theorem]{Example}
\newtheorem{remark}[theorem]{Remark}
\newtheorem{notation}[theorem]{Notation}
\newcommand \bth[1] { \begin{theorem}\label{t#1} }
\newcommand \ble[1] { \begin{lemma}\label{l#1} }
\newcommand \bpr[1] { \begin{proposition}\label{p#1} }
\newcommand \bco[1] { \begin{corollary}\label{c#1} }
\newcommand \bde[1] { \begin{definition}\label{d#1}\rm }
\newcommand \bex[1] { \begin{example}\label{e#1}\rm }
\newcommand \bre[1] { \begin{remark}\label{r#1}\rm }
\newcommand \bnota[1] {\begin{notation}\label{n#1}\rm }
\newcommand {\ele} { \end{lemma} }
\newcommand {\epr} { \end{proposition} }
\newcommand {\eco} { \end{corollary} }
\newcommand {\ede} { \end{definition} }
\newcommand {\eex} { \end{example} }
\newcommand {\ere} { \end{remark} }
\newcommand {\enota} { \end{notation} }
\def \Id { {\mathrm{Id}} }
\DeclareMathOperator \tr { {\mathrm{tr}} }
\DeclareMathOperator \Hom { {\mathrm{Hom}} }
\DeclareMathOperator \Ext { { \mathrm{Ext}}}
\DeclareMathOperator \Ind { {\mathrm{Ind}} }
\DeclareMathOperator \Res { {\mathrm{Res}} }
\DeclareMathOperator \sgn { { \mathrm{sgn}}}
\DeclareMathOperator \im { { {\mathrm im}}}
\begin{document}
%\tableofcontents
\setlength{\baselineskip}{1.2\baselineskip}
%%%%%%%%%%%%%%%%%%%%%%%%%%%%%%%%%%%%%%%%%%%%%%%%%%%%%%%%%%%%%%%%%%%%%%%%%%%
%%%%%%%%%%%%%%%%%%%%%%    Title    %%%%%%%%%%%%%%%%%%%%%%%%%%%%%%%%%%%%%%%%
\title[Duality for Ext-groups]
{Duality for Ext-groups and extensions of discrete series for graded Hecke algebras}
\author[Kei Yuen Chan]{Kei Yuen Chan}
\address{ Korteweg-de Vries Institute for Mathematics, Universiteit van Amsterdam}
\email{K.Y.Chan@uva.nl}

\maketitle 
\begin{abstract}
In this paper, we study extensions of graded affine Hecke algebra modules. In particular, based on an explicit projective resolution on graded affine Hecke algebra modules, we prove a duality result for $\mathrm{Ext}$-groups. This duality result with an $\mathrm{Ind}$-$\mathrm{Res}$ resolution gives an algebraic proof of the fact that all higher $\mathrm{Ext}$-groups between a tempered module and a discrete series vanish. 
\end{abstract}

\section{Introduction} \label{s intro}

\subsection{}
Graded affine Hecke algebras were defined by Lusztig \cite{Lu} for the study of the representations of affine Hecke algebras and $p$-adic groups. The relation between affine Hecke algebras and their graded ones can be thought as an analog of the relation between Lie groups and Lie algebras, and so graded affine Hecke algebras are simpler in certain aspects. 

The classification of irreducible graded Hecke algebra modules has been studied extensively. A notable result is the Kazhdan-Lusztig geometric classification \cite{KL} for equal parameter cases. In arbitrary parameters, general results include the Langlands classification \cite{Ev} and a classification of discrete series by Opdam-Solleveld \cite{OS1}. There are also some other classification results \cite{CK, Ka, Kri, KR, Sl}.

%The classification of irreducible $\mathbb{H}$-modules has been well understood in the literature. In the equal parameter case, Lusztig-Kazhdan classified all the irreducible modules by using the equivariant homology method. In arbitrary parameters, one approach is to use the Langlands classification \cite{Ev} reducing the classification problem to classifying all tempered modules. Since all tempered modules are parabolically inudced from a discrete series twisted by an unitary character, the problem can be further divided into two aprts: classifying all discrete series and decomposing relevant parabolically induced modules. The former one is solved by Opdam-Solleveld \cite{OS1} in which they classified by using the orthogonality of discrete series in the Euler-Poincar\'e pairing of  affine Hecke algebra modules. The latter one is solved by Opdam-Delorme \cite{DO2}, in which they use the idea of the $R$-groups introduced by Knapp-Stein.

This paper is to study another aspect of graded affine Hecke algebra modules, that is extensions between modules. The extension problem is a natural question after thorough understanding on the classification of irreducible graded Hecke algebra modules. Our goal in this paper is to establish some general results for extensions, and we do not use any explicit classification of simple modules in this paper. 

There are some related studies of the extension problem in the literature \cite{AP, BW, Ka2, Me, OS, OS2, Or,Pr, SS, So, Vi}. While our work is motivated from some known results in the setting of $p$-adic groups and affine Hecke algebras, our approach is self-contained in the theory of graded affine Hecke algebras. Moreover, because of the algebraic nature of our approach, results apply to the graded Hecke algebra of non-crystallographic types (see \cite{Kri} and \cite{KR}) and complex parameters, which seems to be new in the literature. This extends some results such as the ones in \cite{CT} to a general graded Hecke algebra. It may be interesting to consider our approach in some similar algebraic structures such as degenerate affine Hecke-Clifford algebras \cite{Na} and graded Hecke algebras for complex reflection groups \cite{SW}. 

\subsection{}
There are two main results in this paper. The first one establishes a duality of $\mathrm{Ext}$-groups. The second one is to apply the duality result with a resolution arisen from $\mathrm{Ind}$-$\mathrm{Res}$ functor to compute extensions between a tempered module and a discrete series. 

The duality result involves two main ingredients. Let $\mathbb{H}$ be a graded affine Hecke algebra (see Definition \ref{def graded affine}). The first ingredient is an explicit construction of a projective resolution on $\mathbb{H}$-modules, which makes computations possible. The projective resolution is an analogue of the classical Koszul resolution for relative Lie algebra cohomology. The second main ingredient is three operations $*$, $\bullet$, $\iota$ on $\mathbb{H}$ (see Section \ref{sec duals} and Section \ref{ss IM invo} for the detailed definitions).

Those three operations play some roles in the literature. The first anti-involution $*$ arises naturally from the study of unitary duals for the Hecke algebra of a $p$-adic group (see \cite{BM, BM1}). We remark that our definition indeed differs from the one in \cite{BM} by a complex conjugate, which is more convenient for our study. The $*$-dual (in our notation) of an $\mathbb{H}$-module indeed corresponds to the contragredient representation in the $p$-adic group level (see \cite[Section 5]{BM1}). The second anti-involution $\bullet$ is studied in a recent paper of Barbasch-Ciubotaru \cite{BC} as an Hecke algebra analogue of the  compact-star operation for $(\mathfrak{g}, K)$-modules in \cite{ALTV, Ye}. The $\bullet$-operation is also studied by Opdam \cite{Op0} in the Macdonald theory for affine Hecke algebras. This $\bullet$-operation naturally appears in the content of Iwahori-Hecke algebras via the projective (non-admissible) modules defined by Bernstein (see \cite{BC2}). The last operation $\iota$ on $\mathbb{H}$ is the Iwahori-Masumoto involution, which is shown by Evens-Mirkovi\'c \cite{EM} to have close connection with the geometric Fourier-Deligne transform.

For each of the operations $*, \bullet, \iota$, it induces a map from the set of $\mathbb{H}$-modules to the set of $\mathbb{H}$-modules. For an $\mathbb{H}$-module $X$, we denote by $X^*$, $X^{\bullet}$ and $\iota(X)$  (see Section \ref{sec duals} and Section \ref{ss IM invo}) for the corresponding dual $\mathbb{H}$-modules respectively. We also define $\mathbb{D}(X)=\iota(X^{\bullet})^*$. 

Our first main theorem is the following duality on the $\mathrm{Ext}$-groups:

\begin{theorem} \label{thm poin dua intro} (Theorem \ref{thm poin dua}, Theorem \ref{thm dua YP}) Let $\mathbb{H}$ be the graded affine Hecke algebra associated to a root datum $( R, V, R^{\vee}, V^{\vee}, \Pi)$ and a parameter function $k: \Pi\rightarrow \mathbb{C}$ (Definition \ref{def graded affine}). Let $n=\dim V$.  Let $X$ and $Y$ be finite dimensional $\mathbb{H}$-modules. Then we have the following:
\begin{enumerate}
\item[(1)] there exists a natural non-degenerate pairing 
\[  \mathrm{Ext}^i_{\mathbb{H}}(X, Y) \times \mathrm{Ext}_{\mathbb{H}}^{n-i}(X^*, \iota(Y)^{\bullet})  \rightarrow \mathbb{C}.
\]
\item[(2)] the Yoneda product
\[ \mathscr{Y}_i: \mathrm{Ext}^{n-i}_{\mathbb{H}}(Y, \mathbb{D}(X))  \otimes \mathrm{Ext}^i_{\mathbb{H}}(X, Y) \rightarrow \mathrm{Ext}_{\mathbb{H}}^n (X, \mathbb{D}(X)) \]
is a nondegenerate pairing.
\end{enumerate}
Here the $\mathrm{Ext}^i_{\mathbb{H}}$-groups are taken in the category of $\mathbb{H}$-modules.
\end{theorem}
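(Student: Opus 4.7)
The plan is to model the argument on Poincar\'e duality for relative Lie algebra cohomology. The first ingredient is the Koszul-type projective resolution alluded to in the introduction: for a finite dimensional $\mathbb{H}$-module $X$, one constructs an explicit resolution
\[ \cdots \to \mathbb{H} \otimes_{\mathbb{C}[W]} \bigwedge^i V \otimes X \to \cdots \to \mathbb{H} \otimes_{\mathbb{C}[W]} X \to X \to 0, \]
where $W$ acts diagonally on $\bigwedge^\bullet V \otimes X$ and the differential is a Hecke algebra analogue of the Chevalley--Eilenberg differential, written using the $\mathbb{H}$-commutation relations between $V$ and $\mathbb{C}[W]$. Each term is projective thanks to the PBW statement that $\mathbb{H}$ is free as a right $\mathbb{C}[W]$-module. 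Consequently
\[ \mathrm{Ext}^i_{\mathbb{H}}(X,Y) \cong H^i\bigl( \Hom_{\mathbb{C}[W]}(\bigwedge^\bullet V \otimes X, Y) \bigr). \]

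For part (1), I would set up a pairing at the level of cochains. On $i$-cochains it combines three ingredients: the canonical evaluation $X \otimes X^* \to \mathbb{C}$, the wedge product $\bigwedge^i V \otimes \bigwedge^{n-i} V \to \bigwedge^n V$, and a natural $W$-invariant pairing between $Y$ and $\iota(Y)^\bullet$ coming from the underlying vector-space identifications that define $\iota$ and $\bullet$. After projecting $\bigwedge^n V$ onto $\sgn_W$ and averaging over $W$, this yields in each degree a perfect $\mathbb{C}$-bilinear pairing
\[ \Hom_{\mathbb{C}[W]}(\bigwedge^i V \otimes X, Y) \times \Hom_{\mathbb{C}[W]}(\bigwedge^{n-i} V \otimes X^*, \iota(Y)^\bullet) \to \mathbb{C}. \]

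The technical core is to verify that this is a pairing of complexes, i.e.\ compatible with the Koszul differentials up to sign, so that it descends to cohomology. The three involutions play complementary roles: $*$ converts the left $\mathbb{H}$-action on $X$ into a right action so the two sides can be paired; $\bullet$ absorbs the $W$-twist arising when one compares the dualized differential of the resolution of $X^*$ with the original differential on the resolution of $X$, which is where the cross-terms from the Hecke relations enter; and $\iota$ accounts for the sign character appearing through the top form $\bigwedge^n V \cong \sgn_W$. Lining up these three twists so that every sign, anti-involution and $W$-action cancels is the main bookkeeping obstacle I anticipate. Once this is done, $\mathbb{C}$-duality in each finite dimensional degree immediately produces the claimed non-degenerate Ext-pairing.

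For part (2), I apply (1) in top degree with $Y$ replaced by $\mathbb{D}(X)$. Using that $*$ and $\bullet$ are anti-involutions and $\iota$ is an involution, all commuting with one another up to natural isomorphism, one obtains $\iota(\mathbb{D}(X))^\bullet \simeq X^*$; pairing against $\mathrm{id}_{X^*} \in \Hom_{\mathbb{H}}(X^*, X^*)$ then extracts a canonical trace $\mathrm{tr}\colon \mathrm{Ext}^n_{\mathbb{H}}(X, \mathbb{D}(X)) \to \mathbb{C}$. The same identifications induce an isomorphism $\mathrm{Ext}^{n-i}_{\mathbb{H}}(X^*, \iota(Y)^\bullet) \cong \mathrm{Ext}^{n-i}_{\mathbb{H}}(Y, \mathbb{D}(X))$, and a standard diagram chase through injective resolutions identifies the pairing of (1) with $\mathrm{tr} \circ \mathscr{Y}_i$. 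Non-degeneracy of $\mathscr{Y}_i$ therefore follows from that of (1).
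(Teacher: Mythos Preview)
Your plan for part (1) is essentially the paper's own argument: compute $\mathrm{Ext}$ via the Koszul-type resolution, build a cochain-level pairing from the three obvious pieces, check $W$-invariance and the adjoint relation with the differential, then pass to cohomology. One refinement worth knowing: the paper rewrites the differential using the elements $\widetilde{v}=v-\tfrac12\sum_{\alpha>0}k_\alpha\langle v,\alpha^\vee\rangle t_{s_\alpha}$, which satisfy $t_w\widetilde{v}=\widetilde{w(v)}t_w$, $\widetilde{v}^{\,*}=-\widetilde{v}$, $\widetilde{v}^{\,\bullet}=\widetilde{v}$, $\iota(\widetilde{v})=-\widetilde{v}$. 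These identities are what make the adjoint computation clean; your description of which involution handles which sign is a little off (in the paper $\iota$ is what gives $W$-invariance of the pairing, while the $*/\bullet$ duality is what produces the correct adjoint of $D$), but the overall scheme is right.

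Part (2) is where your sketch is too optimistic. You propose to extract a trace $\mathrm{tr}\colon \mathrm{Ext}^n_{\mathbb H}(X,\mathbb D(X))\to\mathbb C$ from (1), invoke the isomorphism $\mathrm{Ext}^{n-i}_{\mathbb H}(X^*,\iota(Y)^\bullet)\cong\mathrm{Ext}^{n-i}_{\mathbb H}(Y,\mathbb D(X))$, and then say a ``standard diagram chase through injective resolutions'' shows the pairing of (1) equals $\mathrm{tr}\circ\mathscr Y_i$. That last step is not standard here. The isomorphism you are using is itself proved at the level of the explicit Koszul complexes (by swapping tensor factors in $(X^*\otimes Y^*\otimes(\wedge^iV)^\vee)^W$), not by an abstract adjunction, so there is no a priori reason it intertwines the cochain pairing of (1) with composition in the derived category. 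The paper devotes an entire section to this: it recasts the pairing as a trace $\mathrm{Tr}_i$ on $\Hom_{\mathbb H}(P_i,({}^\iota P_{n-i})^*)$, proves a transfer lemma $\mathrm{Tr}_i({}^\iota d_{n-i-1}^*\circ\alpha)=\pm\,\mathrm{Tr}_{i+1}(\alpha\circ d_i)$, and then, given $\eta\in\mathrm{Ext}^i(X,Y)$ and a suitable $\eta'$, builds the Yoneda extension explicitly via iterated pullbacks, constructing lifts $F_k\colon P_k\to {}^\iota Z_{n-k}^*$ step by step and tracking $\mathrm{Tr}$ through each stage to conclude that the resulting class in $\mathrm{Ext}^n(X,\mathbb D(X))$ is nonzero. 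None of this is a routine chase; it is the technical heart of the theorem. Your outline would need either this explicit construction or a genuinely categorical argument (e.g.\ showing the pairing of (1) is a morphism of $\delta$-functors in $Y$ and then invoking a uniqueness statement), neither of which you have supplied.
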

\noindent
(For some comments on the formulation and the proof of Theorem \ref{thm poin dua intro}, see Remark \ref{rmk comment dua result}.) Theorem \ref{thm poin dua intro} is an analogue of the Poincar\'e duality for real reductive groups (see \cite[Ch.I Proposition 2.9]{BW}, \cite[Theorem 6.10]{Kn}). Moreover, statements (1) and (2) are compatible in the sense that there exists a natural linear functional $\mathscr D_X: \mathrm{Ext}^n_{\mathbb{H}}(X, \mathbb{D}(X)) \rightarrow \mathbb{C}$ such that $\mathscr D_X \circ \mathscr Y_i$ agrees with the pairing in (1) (see Theorem \ref{thm dua YP}(3)). 

%There is also a duality result in $p$-adic group setting by Schneider-Stuhler \cite[Duality theorem in pg 133]{SS}.

The second result of this paper is to apply the duality to compute extensions between a tempered module and a discrete series. Those tempered modules and discrete series are defined algebraically in terms of weights (Definition \ref{def temp ds}) and correspond to tempered modules and discrete series of $p$-adic groups when the parameter function is positive and equal. Since discrete series are basic building blocks of irreducible $\mathbb{H}$-modules, it may be important to first understand the extensions among them. Our second result states that:

\begin{theorem} \label{thm ds ext intro} (Theorem \ref{thm ds ext})
Let $\mathbb{H}$ be the graded affine Hecke algebra associated to a root datum $( R, V, R^{\vee}, V^{\vee}, \Pi)$ and a parameter function $k: \Pi \rightarrow \mathbb{C}$ (Definition \ref{def graded affine}). Assume $R$ spans $V$. Let $X$ be an irreducible tempered module and let $Y$ be an irreducible discrete series (Definition \ref{def temp ds}). Then
\[     \mathrm{Ext}^i_{\mathbb{H}}(X, Y) =\left\{ \begin{array}{cc}
                                                       \mathbb{C} & \quad \mbox{ if $X \cong Y$ and $i=0$ } \\
																											 0       & \quad \mbox{ otherwise . }
																					\end{array}  \right.
\]

\end{theorem}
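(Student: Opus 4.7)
The plan is to combine the Langlands classification of irreducible tempered modules, Frobenius reciprocity for $\Ext$-groups, and the duality of Theorem \ref{thm poin dua intro}, together with an induction on $n = \dim V$. The $i = 0$ statement follows from Schur's lemma since both modules are irreducible. For the higher-$\Ext$ vanishing, one proceeds by induction as follows. By the Langlands classification, any irreducible tempered $X$ appears as a direct summand of a parabolically induced module $\Ind_{\mathbb{H}_M}^{\mathbb{H}}\sigma$ for some parabolic Levi subalgebra $\mathbb{H}_M$ and an irreducible discrete series $\sigma$ of $\mathbb{H}_M$, with $M$ equal to the full set of simple roots iff $X$ is itself a discrete series. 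Because $\Ind$ is exact (by PBW flatness of $\mathbb{H}$ over $\mathbb{H}_M$), its right adjoint $\Res_M$ preserves injectives, so Frobenius reciprocity upgrades to all degrees:
\[ \Ext^i_{\mathbb{H}}(\Ind_{\mathbb{H}_M}\sigma, Y) \cong \Ext^i_{\mathbb{H}_M}(\sigma, \Res_M Y). \]

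When $\mathbb{H}_M$ is a proper parabolic subalgebra (equivalently, $X$ is not a discrete series), I would invoke the Casselman-type criterion for the discrete series $Y$: every generalized weight of $\Res_M Y$ lies strictly in the open negative chamber of $V$. Decomposing $\Res_M Y$ by the action of the center of $\mathbb{H}_M$ into blocks, then into Jordan-H\"older factors, each simple constituent is an irreducible discrete series of $\mathbb{H}_M$. The induction hypothesis on the lower-rank algebra $\mathbb{H}_M$ then yields the vanishing of $\Ext^i_{\mathbb{H}_M}(\sigma, \cdot)$ for $i \geq 1$ on each such factor, and the long exact sequence in $\Ext$ propagates this to $\Ext^{\geq 1}_{\mathbb{H}_M}(\sigma, \Res_M Y) = 0$, hence $\Ext^{\geq 1}_{\mathbb{H}}(X, Y) = 0$ as a direct summand. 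The remaining case $M = \Pi$, where $X$ is itself an irreducible discrete series, is handled through the duality. By part (1) of Theorem \ref{thm poin dua intro},
\[ \Ext^i_{\mathbb{H}}(X, Y) \cong \Ext^{n-i}_{\mathbb{H}}(X^*, \iota(Y)^{\bullet})^*. \]
Applying the $\Ind$-$\Res$ resolution to $X^*$, writing it as an iterated cone of modules $\Ind_{\mathbb{H}_{M'}}^{\mathbb{H}}\Res_{M'}X^*$ for proper parabolics $M' \subsetneq \Pi$, and then $\Hom(-, \iota(Y)^{\bullet})$ combined with Frobenius reciprocity reduces each spectral sequence entry to $\Ext^{\bullet}$ over the smaller algebra $\mathbb{H}_{M'}$. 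These vanish in degrees $< n$ on the $X^*$ side (equivalently $i > 0$) by the induction hypothesis, leaving only the top-degree contribution matching the $i = 0$ output of Schur.

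The main obstacle is the proper-Levi analysis: one must rigorously show that every simple constituent of $\Res_M Y$ is a discrete series of $\mathbb{H}_M$, which requires a careful weight and central-character computation via the Casselman criterion together with the compatibility between parabolic restriction and the central character of $\mathbb{H}_M$. A related subtlety appears in the discrete-series case, where the cancellations in the $\Ind$-$\Res$ spectral sequence in degrees $< n$ must be tracked, requiring a precise understanding of how the operations $\iota$ and $\bullet$ interact with the discrete-series weight condition.
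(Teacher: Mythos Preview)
Your proposal splits into two cases and tries to glue them by induction on $\dim V$, but both branches have a gap at exactly the point you flag as the ``main obstacle,'' and the paper's proof avoids the case split and the induction entirely.

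In the branch where $X$ is itself discrete series, you dualize to $\Ext^{n-i}_{\mathbb H}(X^*,\iota(Y)^{\bullet})$, run the Ind--Res resolution, and then want the proper-Levi terms to vanish ``by the induction hypothesis.'' But $\iota(Y)^{\bullet}$ has every weight in the \emph{open positive} chamber (the $\bullet$-dual preserves weights and $\iota$ negates them), so neither it nor any of its restrictions is tempered or discrete series; the inductive statement $\Ext^{\ge 1}(\text{tempered},\text{DS})=0$ simply does not apply here. In the other branch, you need the simple constituents of $\Res_M Y$ to be discrete series of $\mathbb H_M$; but in the paper's conventions $\mathbb H_M$ contains all of $S(V)$, so for proper $M$ its root system does not span $V$ and the discrete-series condition of Definition~\ref{def temp ds} forces every weight into $\Span\{\alpha^\vee:\alpha\in M\}$, which the weights of $Y$ do not satisfy. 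One can try to split off the central factor, but then one must still check that the projections land in the open negative $M$-chamber, which is not automatic from the $\Pi$-condition.

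The paper's proof (Theorem~\ref{thm ds ext}) treats all tempered $X$ uniformly, with no induction. It applies the Ind--Res resolution to $X$ and computes $\Ext^\bullet_{\mathbb H}(X,\mathbb D(Y))$ by a spectral sequence whose proper-Levi contributions $\Ext^q_{\mathbb H}(\Ind_{\mathbb H_J}\Res_{\mathbb H_J}X,\mathbb D(Y))$ vanish in \emph{all} degrees by a one-line central-character argument (Lemma~\ref{lem proper vanishing}): for $\beta\in\Pi\setminus J$ the fundamental weight $\omega_{\beta^\vee}$ is $W_J$-fixed and pairs $\le 0$ with every weight of $X$ but $>0$ with every weight of $\mathbb D(Y)$, so $\Res_{\mathbb H_J}X$ and $\Res_{\mathbb H_J}\mathbb D(Y)$ share no $\mathbb H_J$-central character. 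This replaces your inductive step, and the spectral sequence collapses to give $\Ext^i_{\mathbb H}(X,\mathbb D(Y))=0$ for $i\le n-1$. A single application of the duality (Theorem~\ref{thm poin dua}) then yields $\Ext^i_{\mathbb H}(X,Y)=0$ for $i\ge 1$.

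The Langlands-classification route you sketch is indeed the alternative proof the author alludes to in the introduction (from the thesis), but making it go through requires more bookkeeping about what ``discrete series of $\mathbb H_M$'' means and why restriction preserves it. The central-character lemma is what lets the paper bypass all of that.
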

In other words, discrete series are projective modules in the category of tempered modules. From this, one can establish an upper bound on the number of isomorphism classes of discrete series (Corollary \ref{cor dimension bound}). This is important in the classification of simple $\mathbb{H}$-modules. 

 The statement for affine Hecke algebra setting is proven by Opdam-Solleveld \cite{OS}, and the one for $p$-adic group setting is proven  by Meyer \cite{Me}, in which they used Schwartz algebras. The method we prove Theorem \ref{thm ds ext intro} is different from theirs nevertheless. Our strategy makes use of a resolution arisen from the $\mathrm{Ind}$-$\mathrm{Res}$-functors (see Section \ref{s kato resolution}) and also makes use of Theorem \ref{thm poin dua intro}. Thus we provide another point of view for the result. We remark that there is another proof for Theorem \ref{thm ds ext intro} in the author's thesis \cite[Chapter 8]{Ch}, which uses the Langland's classification and properties of parabolically induced modules.

\subsection{} This paper is organized as follows. Section \ref{s intro} is the introduction. Section \ref{s prelim} is devoted to set-up basic notations and recall the definition of the graded affine Hecke algebra. Section \ref{s resol h mod} is to construct an explicit projective resolution for $\mathbb{H}$-modules. Section \ref{s dua thm} proves a duality result for $\mathrm{Ext}$-groups by using the resolution in Section \ref{s resol h mod}. Section \ref{s dua YP} extends the duality in Section \ref{s dua thm} to the level of the Yoneda product. Section \ref{s kato resolution} studies an Ind-Res resolution and then connects the dual module induced by the involution $\mathbb{D}$ and a dual module induced from the Ind-Res resolution. Section \ref{s ext ds} computes the extensions of discrete series by using results in Sections \ref{s dua thm} and \ref{s kato resolution}. Section \ref{s epp} applies the Euler-Poincar\'e pairing to give applications.

\subsection{Acknowledgments} The author would like to thank Dan Ciubotaru, Eric Opdam, Gordan Savin and Peter Trapa for many useful discussions. The author would also like to thank Gordan Savin for several useful comments and discussions on the Aubert involution, which makes improvements for this paper. The author would also like to thank the referee for useful comments and suggestions. Part of this paper is from the author's PhD thesis \cite{Ch}. The author was supported by ERC-advanced grant no. 268105 and the Croucher Postdoctoral Fellowship during the review process.

\section{Preliminaries} \label{s prelim}
\subsection{Root systems and basic notations} \label{ss basic notation}
Let $R$ be a reduced (not necessarily crystallographic) root system. Let $\Pi$ be a fixed choice of simple roots in $R$. Then $\Pi$ determines the set $R^+$ of positive roots. Let $W$ be the finite reflection group of $R$. Let $V_0$ be a real vector space containing $R$. ($R$ does not necessarily span $V$.) For any $\alpha \in \Pi$, let $s_{\alpha}$ be the simple reflection in $W$ associated to $\alpha$ (i.e. $\alpha \in V_0$ is in the $-1$-eigenspace of $s_{\alpha}$). For $\alpha \in R$, let $\alpha^{\vee} \in \Hom_{\mathbb{R}}(V_0, \mathbb{R})$ such that
\[   s_{\alpha}(v) = v-\langle v, \alpha^{\vee} \rangle \alpha, \]
where $\langle v, \alpha^{\vee} \rangle=\alpha^{\vee}(v)$. Let $R^{\vee} \subset \Hom_{\mathbb{R}}(V_0, \mathbb{R})$ be the collection of all $\alpha^{\vee}$. Let $V_0^{\vee}= \Hom_{\mathbb{R}}(V_0, \mathbb{R})$.

By extending the scalars, let $V=\mathbb{C} \otimes_{\mathbb{R}} V_0$ and let $V^{\vee}=\mathbb{C} \otimes_{\mathbb{R}} V_0^{\vee}$. We call $(R, V, R^{\vee}, V^{\vee}, \Pi)$ to be a root datum.

%Let $w_0$ be the longest element in $W$. 

%For any subset $J$ of $\Pi$, define $V_J$ to be the complex subspace of $V$ spanned by simple roots in $J$. Let $R_J = V_J \cap R$. Let $R_J^{\vee}= \left\{ \alpha^{\vee} \in R^{\vee}: \alpha \in R_J \right\}$. Let $V_J^{\vee}$ be the subspace of $V^{\vee}$ spanned by the coroots in $R_J^{\vee}$. Let $W_J$ be the subgroup of $W$ generated by the elements $s_{\alpha}$ for $\alpha \in J$. Define 
%\[  V_J^{\bot} = \left\{  v \in V : \langle v, v_1^{\vee} \rangle =0 \quad \mbox{ for all $v_1^{\vee} \in V_J^{\vee} $ } \right\}, \]
%and
%\[ (V_J^{\vee})^{\bot} = \left\{ v^{\vee} \in V^{\vee} : \langle v_1, v^{\vee} \rangle =0 \quad \mbox{ for all $v_1 \in V_J $ } \right\}.\]

\subsection{Graded affine Hecke algebras}

Let $k: \Pi \rightarrow \mathbb{C}$ be a parameter function such that $k(\alpha)=k(\alpha')$ if $\alpha$ and $\alpha'$ are in the same $W$-orbit. We shall simply write $k_{\alpha}$ for $k(\alpha)$.

\begin{definition} \label{def graded affine} \cite[Section 4]{Lu}
The graded affine Hecke algebra $\mathbb{H}=\mathbb{H}(\Pi,k)$ associated to a root datum $(R, V, R^{\vee}, V^{\vee}, \Pi)$ and a parameter function $k$ is an associative algebra with an unit over $\mathbb{C}$ generated by the symbols $\left\{ t_w :w \in W \right\}$ and $\left\{ f_w: w \in V \right\}$ satisfying the following relations:
\begin{enumerate}
\item[(1)] The map $w  \mapsto t_w$ from $\mathbb{C}[W]=\bigoplus_{w\in W} \mathbb{C}w  \rightarrow \mathbb{H }$ is an algebra injection,
\item[(2)] The map $v \mapsto f_v$ from $S(V) \rightarrow \mathbb{H}$ is an algebra injection,
\end{enumerate}
For simplicity, we shall simply write $v$ for $f_v$ from now on.
\begin{enumerate}
\item[(3)] the generators satisfy the following relation: for $\alpha \in \Pi$ and $v \in V$,
\[    t_{s_{\alpha}}v-s_{\alpha}(v)t_{s_{\alpha}}=k_{\alpha}\langle v, \alpha^{\vee} \rangle .\]
\end{enumerate}

\end{definition}

\subsection{$\mathrm{Ext}_{\mathbb{H}}$-groups and central characters}

Let $Z(\mathbb{H})$ be the center of $\mathbb{H}$. An $\mathbb{H}$-module $X$ is said to have a {\it central character} if there exists a function $\chi: Z(\mathbb{H}) \rightarrow \mathbb{C}$ such that every $z \in Z(\mathbb{H})$ acts by the scalar $\chi(z)$ on $X$. If $X$ is an irreducible $\mathbb{H}$-module, then $X$ is finite-dimensional and so has a central character by Schur's Lemma.

The center $Z(\mathbb{H})$ of $\mathbb{H}$ is $S(V)^W$ (i.e. the set of $W$-invariant polynomials) \cite[Proposition 4.5]{Lu}. Suppose $X$ is an $\mathbb{H}$-module with the central character $\chi$. Then there exists a $W$-orbit $O$ in $V^{\vee}$ such that $\chi(z)=\gamma(z)$ for any $\gamma \in O$. We shall also say the $W$-orbit $O=W\gamma$ is the central character of $X$. 

For $\mathbb{H}$-modules $X$ and $Y$, $\mathrm{Ext}^i_{\mathbb{H}}(X,Y)$ are the higher extensions of $X$ and $Y$ in the category of $\mathbb{H}$-modules. The consideration of central characters will play a crucial role in computing some $\mathrm{Ext}$-groups later. In particular, we shall use the following results later. 
%Here $\Ext_{\mathbb{H}}$-groups are taken in the category of $\mathbb{H}$-modules.

\begin{proposition} \label{prop char ext 0}
Let $X$ and $Y$ be $\mathbb{H}$-modules. If $X$ and $Y$ have distinct central characters, then $\Ext^i_{\mathbb{H}}(X, Y)=0$ for all $i$. 

\end{proposition}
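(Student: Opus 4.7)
The plan is to exploit the standard fact that the center $Z(\mathbb{H})$ acts on every $\Ext$-group, and that this action can be computed either from the first or the second argument, giving the same answer. Distinct central characters then force an invertible scalar to act as zero.

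More concretely, first I would fix a projective resolution $P_\bullet \to X$ in the category of $\mathbb{H}$-modules, so that $\Ext^i_{\mathbb{H}}(X,Y) = H^i(\Hom_{\mathbb{H}}(P_\bullet, Y))$. For each $z \in Z(\mathbb{H}) = S(V)^W$, multiplication by $z$ is an $\mathbb{H}$-module endomorphism of every $\mathbb{H}$-module, so it defines two chain endomorphisms of the complex $\Hom_{\mathbb{H}}(P_\bullet, Y)$: one by pre-composition (coming from $z$ acting on each $P_j$, which lifts the action of $z$ on $X$) and one by post-composition (coming from $z$ acting on $Y$). A routine check shows these two endomorphisms of the Hom-complex are equal on the nose, since $f(z \cdot p) = z \cdot f(p)$ for every $\mathbb{H}$-linear $f$ and every $z \in Z(\mathbb{H})$.

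Next I would use the central character hypothesis. Since $X$ has central character $\chi_X$, the action of $z$ on $X$ (and hence the lift to $P_\bullet$) induces multiplication by the scalar $\chi_X(z)$ on $\Ext^i_{\mathbb{H}}(X,Y)$. Since $Y$ has central character $\chi_Y$, the post-composition action induces multiplication by $\chi_Y(z)$. By the previous step these two induced maps agree, so $\chi_X(z) = \chi_Y(z)$ as operators on $\Ext^i_{\mathbb{H}}(X,Y)$. Equivalently, the scalar $\chi_X(z) - \chi_Y(z)$ annihilates $\Ext^i_{\mathbb{H}}(X,Y)$.

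Finally, since the central characters of $X$ and $Y$ are distinct, I can choose some $z \in Z(\mathbb{H})$ with $\chi_X(z) \neq \chi_Y(z)$. Then $\chi_X(z) - \chi_Y(z)$ is a nonzero complex number annihilating $\Ext^i_{\mathbb{H}}(X,Y)$, forcing $\Ext^i_{\mathbb{H}}(X,Y) = 0$. There is no real obstacle here; the only thing to be careful about is the equality (not merely chain-homotopy) of the two $Z(\mathbb{H})$-actions on the Hom-complex, which is immediate from centrality of $z$ and $\mathbb{H}$-linearity of cochains.
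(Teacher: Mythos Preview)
Your argument is correct and is precisely the standard one: the paper does not give its own proof but refers to \cite[Theorem I.4.1]{BW}, whose argument in the $(\mathfrak{g},K)$-module setting is exactly this comparison of the two $Z(\mathbb{H})$-actions on $\Ext$. One small clarification worth making explicit: pre-composition by $z$ on $\Hom_{\mathbb{H}}(P_\bullet,Y)$ is not literally the scalar $\chi_X(z)$ on cochains, but it induces $\chi_X(z)$ on cohomology because multiplication by $z$ on $P_\bullet$ and multiplication by the scalar $\chi_X(z)$ on $P_\bullet$ are two lifts of the same endomorphism of $X$, hence chain-homotopic; you invoke this implicitly via functoriality of $\Ext^i(-,Y)$, which is fine.
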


\begin{proof}
See for example \cite[Theorem I. 4.1]{BW}, whose proof can be modified to our setting.
\end{proof}

\section{Koszul-type resolution for $\mathbb{H}$-modules} \label{s resol h mod}

In this section, we construct an explicit projective resolution for graded affine Hecke algebra modules, which is the main tool for proving the duality for $\mathrm{Ext}$-groups in Section \ref{s dua thm}.

We keep using the notation in Section \ref{s prelim}. Let $(R, V, R^{\vee}, V^{\vee}, \Pi)$ be a root datum and let $W$ be the real reflection group associated to $R$. Let $k: \Pi \rightarrow \mathbb{C}$ be a parameter function. Let $\mathbb{H}$ be the graded affine Hecke algebra associated to $\Pi$ and $k$.

%For any two complex vector spaces $X_1$ and $X_2$, we shall simply write $X_1 \otimes X_2$ for $X_1 \otimes_{\mathbb{C}}X_2$. 

\subsection{Projective objects and injective objects}
In this section, we construct some projective objects and injective objects, which will be used to construct explicit resolutions for $\mathbb{H}$-modules in the next sections. We do not need those injective objects in this paper anyway.

Let $X$ be an $\mathbb{H}$-module and let $U$ be a finite-dimensional $\mathbb{C}[W]$-module. $\mathbb{H}$ acts on the space $\mathbb{H} \otimes_{\mathbb{C}[W]}U$ by the left multiplication on the first factor while $\mathbb{H}$ acts on the space $\mathrm{Hom}_W(\mathbb{H}, U)$ by the right translation, explicitly that is for $f \in\mathrm{Hom}_W(\mathbb{H}, U)$, the action of $h' \in \mathbb{H}$ is given by:
\[   (h'.f)(h)=f(hh') , \quad \mbox{ for all $h \in \mathbb{H}$ }.\]

Denote by $\mathrm{Res}_W$ the restriction functor from $\mathbb{H}$-modules to $\mathbb{C}[W]$-modules.

%We skip the detail for the following two results. One may refer to the context around \cite[Chapter VII Corollary 7.20]{Kn} for a similar proof.
\begin{lemma} (Frobenius reciprocity) 
Let $X$ be an $\mathbb{H}$-module. Let $U$ be a $\mathbb{C}[W]$-module. Then
\[   \mathrm{Hom}_{\mathbb{H}}(X, \mathrm{Hom}_{W}(\mathbb{H}, U))=\mathrm{Hom}_{W}(\mathrm{Res}_WX, U) ,\]
and
\[ \mathrm{Hom}_{\mathbb{H}}(\mathbb{H}\otimes_{\mathbb{C}[W]}U, X)=\mathrm{Hom}_{W}(U, \mathrm{Res}_WX) .\]

\end{lemma}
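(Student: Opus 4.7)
The plan is to present both statements as instances of the standard tensor-hom adjunction, applied to the subalgebra inclusion $\mathbb{C}[W] \hookrightarrow \mathbb{H}$, which is guaranteed by Definition \ref{def graded affine}(1). Under this inclusion, $\mathbb{H}$ acquires the structure of an $(\mathbb{H},\mathbb{C}[W])$-bimodule and of a $(\mathbb{C}[W],\mathbb{H})$-bimodule, and the functor $\mathrm{Res}_W$ is precisely the restriction of scalars along $\mathbb{C}[W] \hookrightarrow \mathbb{H}$. In both parts, one constructs the natural unit/counit maps explicitly and verifies that they are mutually inverse; no feature of $\mathbb{H}$ beyond the bimodule structure is used.

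For the second isomorphism, given a $W$-map $\psi: U \to \mathrm{Res}_W X$, I would define $\Psi: \mathbb{H}\otimes_{\mathbb{C}[W]} U \to X$ by $\Psi(h \otimes u) = h\cdot \psi(u)$. The $W$-linearity of $\psi$ shows that $\Psi$ is well-defined on the balanced tensor product, and the left $\mathbb{H}$-action on the first factor makes $\Psi$ an $\mathbb{H}$-module map. The inverse assigns to $\Psi$ the map $u \mapsto \Psi(1 \otimes u)$, and the two constructions are mutually inverse by a direct check using $h \otimes u = h \cdot (1\otimes u)$.

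For the first isomorphism, given an $\mathbb{H}$-module map $\phi: X \to \mathrm{Hom}_W(\mathbb{H}, U)$, I would define $\psi: \mathrm{Res}_W X \to U$ by $\psi(x) = \phi(x)(1)$. Its $W$-equivariance follows from the chain $\phi(wx)(1) = (w\cdot \phi(x))(1) = \phi(x)(w) = w \cdot \phi(x)(1)$, where the second equality uses the right-translation action on $\mathrm{Hom}_W(\mathbb{H},U)$ and the third uses the $W$-linearity of $\phi(x)$. Conversely, given a $W$-map $\psi:\mathrm{Res}_W X \to U$, I would set $\phi(x)(h) = \psi(hx)$; the identity $\phi(x)(wh) = \psi(whx) = w\,\psi(hx) = w\cdot \phi(x)(h)$ shows that $\phi(x)$ is $W$-linear, and $\phi(h'x)(h) = \psi(hh'x) = \phi(x)(hh') = (h'\cdot\phi(x))(h)$ shows that $\phi$ is $\mathbb{H}$-linear.

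Since everything reduces to elementary direct verifications on the generators of $\mathbb{H}$, I do not anticipate any substantive obstacle; the only input really needed is the algebra embedding $\mathbb{C}[W] \hookrightarrow \mathbb{H}$ of Definition \ref{def graded affine}(1). The naturality in $X$ (and in $U$) of both bijections is then automatic from the constructions.
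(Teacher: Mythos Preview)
Your proposal is correct and follows essentially the same approach as the paper: the paper defines the maps $F(f)(x)=f(x)(1)$ and $G(f)(x)(h)=f(hx)$ for the first isomorphism and says the second is similar, which matches your explicit constructions exactly. Your write-up is in fact more detailed than the paper's, which simply states that the verifications are straightforward.
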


\begin{proof}
Let $F :\mathrm{Hom}_{\mathfrak{R}(\mathbb{H})}(X, \mathrm{Hom}_W(\mathbb{H}, U)) \rightarrow \mathrm{Hom}_{W}(\mathrm{Res}_WX, U)$ given by 
\[(F(f))(x)=(f(x))(1). \]
 Let $G:  \mathrm{Hom}_{W}(\mathrm{Res}_WX, U) \rightarrow \mathrm{Hom}_{\mathfrak{R}(\mathbb{H})}(X, \mathrm{Hom}_W(\mathbb{H}, U))$ given by 
\[ ((G(f)(x))(h)=f(hx). \]
It is straightforward to verify $F$ and $G$ are linear isomorphisms. This proves the second equation. The proof for the second one is similar.
\end{proof}

\begin{lemma} \label{lem proj inj obj}
Let $U$ be a $\mathbb{C}[W]$-module. Then $\mathbb{H} \otimes_{\mathbb{C}[W]}U$ is projective and $\mathrm{Hom}_{W}(\mathbb{H}, U)$ is injective.

\end{lemma}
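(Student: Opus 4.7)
The plan is a standard adjunction argument that reduces projectivity/injectivity over $\mathbb{H}$ to projectivity/injectivity over $\mathbb{C}[W]$, where everything is trivial because $W$ is a finite group and we work over $\mathbb{C}$, so that $\mathbb{C}[W]$ is semisimple. The only ingredient beyond the previous lemma is the observation that the restriction functor $\mathrm{Res}_W$ is exact, which is immediate since restriction along the inclusion $\mathbb{C}[W] \hookrightarrow \mathbb{H}$ only forgets part of the module structure.

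First I would treat the projective statement. The Frobenius reciprocity from the previous lemma gives a natural isomorphism of functors
\[
\mathrm{Hom}_{\mathbb{H}}(\mathbb{H}\otimes_{\mathbb{C}[W]}U,\,-) \;\cong\; \mathrm{Hom}_{W}(U,\,\mathrm{Res}_W(-)).
\]
To show the left-hand side is exact, it suffices to check that the right-hand side is exact. The functor $\mathrm{Res}_W$ is exact, and since $\mathbb{C}[W]$ is semisimple the $\mathbb{C}[W]$-module $U$ is projective, so $\mathrm{Hom}_W(U,-)$ is exact on $\mathbb{C}[W]$-modules. Composing, the right-hand side is exact, hence $\mathbb{H}\otimes_{\mathbb{C}[W]}U$ is projective in the category of $\mathbb{H}$-modules.

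The injective statement is dual: again by the previous lemma,
\[
\mathrm{Hom}_{\mathbb{H}}(-,\,\mathrm{Hom}_W(\mathbb{H},U)) \;\cong\; \mathrm{Hom}_{W}(\mathrm{Res}_W(-),\,U).
\]
The functor $\mathrm{Res}_W$ is exact, and $U$ is injective as a $\mathbb{C}[W]$-module by semisimplicity of $\mathbb{C}[W]$, so $\mathrm{Hom}_W(-,U)$ is exact. Composing, $\mathrm{Hom}_{\mathbb{H}}(-,\mathrm{Hom}_W(\mathbb{H},U))$ is exact, which means $\mathrm{Hom}_W(\mathbb{H},U)$ is injective in the category of $\mathbb{H}$-modules.

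There is no real obstacle here; the argument is essentially forced by the adjunction combined with semisimplicity of $\mathbb{C}[W]$. The only thing worth emphasizing in the write-up is that one does not need any PBW-type freeness of $\mathbb{H}$ over $\mathbb{C}[W]$ for this particular lemma — the exactness of restriction is automatic, and semisimplicity of $\mathbb{C}[W]$ handles the rest — so the proof is quite short and purely formal.
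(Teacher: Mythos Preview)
Your proof is correct and essentially identical to the paper's: both use the Frobenius reciprocity from the previous lemma, exactness of $\mathrm{Res}_W$, and semisimplicity of $\mathbb{C}[W]$ to conclude exactness of the relevant $\mathrm{Hom}$ functors. The paper only sketches the injective case as ``similar,'' whereas you spell it out, but the argument is the same.
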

\begin{proof}
We consider $\mathbb{H} \otimes_{\mathbb{C}[W]}U$. Every $\mathbb{C}[W]$-module is projective and so $\mathrm{Hom}_W(U, .)$ is an exact functor. The functor $\mathrm{Res}_W$ is also exact. Thus we have $\mathrm{Hom}_{W}(U, \mathrm{Res}_W . )$, which is the composition of the two functors $\mathrm{Hom}_W(U,.)$ and $\mathrm{Res}_W$, is also exact. Hence by the Frobenius reciprocity, we also have $\mathrm{Hom}_{\mathbb{H}}(\mathbb{H} \otimes_{\mathbb{C}[W]}U,.)$ is exact. Thus $\mathbb{H} \otimes_{\mathbb{C}[W]}U$ is projective. The proof for $\mathrm{Hom}_{W}(\mathbb{H}, U)$ being injective is similar.
\end{proof}

\subsection{Koszul-type resolution on $\mathbb{H}$-modules} \label{sec kos resol}

%The category of finite dimensional Hecke algebra does not have enough projective objects. For any $W$-representation $U$, $\mathbb{H} \otimes_{\mathbb{C}[W]} U$ is a projective object in the category of $\mathbb{H}$-modules. 

Let $X$ be an $\mathbb{H}$-module. Define a sequence of $\mathbb{H}$-maps $d_i$ as follows:
\begin{align}  \nonumber 0 \rightarrow \mathbb{H} \otimes_{\mathbb{C}[W]}(\Res_WX \otimes \wedge^n V)   \nonumber
      \stackrel{d_{n-1}}{\rightarrow} \ldots & \stackrel{d_{i}}{\rightarrow} \mathbb{H} \otimes_{\mathbb{C}[W]}(\Res_WX \otimes \wedge^i V)  \\  
		 \label{eqn projective resolution}
			 & \stackrel{d_{i-1}}{\rightarrow} \ldots  \stackrel{d_0}{\rightarrow} \mathbb{H} \otimes_{\mathbb{C}[W]}(\Res_WX) \stackrel{\epsilon}{\rightarrow}  X \rightarrow 0
\end{align}
such that $\epsilon: \mathbb{H} \otimes_{\mathbb{C}[W]}\Res_WX \rightarrow X$ given by
\[  \epsilon(h \otimes x)= h.x
\]
and for $i \geq 1$, $d_{i}: \mathbb{H} \otimes_{\mathbb{C}[W]} (\Res_WX \otimes \wedge^{i+1} V) \rightarrow  \mathbb{H} \otimes_{\mathbb{C}[W]} (\Res_WX \otimes \wedge^{i} V)$ given by
 \begin{align*}  
&d_{i}(h\otimes (x \otimes v_1 \wedge\ldots \wedge v_{i+1}))  \\
= & \sum_{j=1}^{i+1} (-1)^{j+1}(h v_j \otimes x \otimes v_1 \wedge \ldots \wedge \widehat{v}_j \wedge \ldots \wedge v_{i+1} -h \otimes v_j. x \otimes v_1 \wedge \ldots \wedge \widehat{v}_j \wedge \ldots \wedge v_{i+1} ) .
\end{align*}
If we want to emphasize the role of $X$, we shall write $d_{i,X}$ for $d_i$. We also write $d_{-1}$ for $\epsilon$.
In priori, we do not know $d_i$ is a well-defined $\mathbb{H}$-map, but we prove in the following:

\begin{lemma} \label{lem well defined d}
The above $d_i$ are well-defined $\mathbb{H}$-maps and $d^2=0$ i.e. (\ref{eqn projective resolution}) is a well-defined complex.
\end{lemma}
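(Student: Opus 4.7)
The plan is to verify three things in turn: (i) the formula defining $d_i$ respects $\mathbb{C}[W]$-balancing, so it descends to a well-defined map on the tensor product over $\mathbb{C}[W]$; (ii) the resulting map is $\mathbb{H}$-linear; (iii) $d_{i-1}\circ d_i=0$. Item (ii) will be immediate from the formula, since the $\mathbb{H}$-action on source and target is by left multiplication on the first factor, whereas $d_i$ only multiplies the first factor on the right by elements of $V$, which commutes with the left action.

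For (i), since $W$ is generated by simple reflections, it suffices to verify
\[
d_i\bigl(ht_{s_\alpha}\otimes (x\otimes v_1\wedge\cdots\wedge v_{i+1})\bigr)=d_i\bigl(h\otimes (s_\alpha.x)\otimes (s_\alpha v_1\wedge\cdots\wedge s_\alpha v_{i+1})\bigr)
\]
in the target space for every $\alpha\in\Pi$. My plan is to expand the left-hand side using the defining relation $t_{s_\alpha}v_j=s_\alpha(v_j)t_{s_\alpha}+k_\alpha\langle v_j,\alpha^\vee\rangle$ and use the $\mathbb{C}[W]$-balancing available in the target to move each $t_{s_\alpha}$ across the tensor symbol, turning it into the diagonal $W$-action on $X\otimes\wedge^{i+1}V$. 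The principal terms then match the right-hand side, and the residual is a multiple of $k_\alpha$ of the form
\[
k_\alpha\sum_{j=1}^{i+1}(-1)^{j+1}\langle v_j,\alpha^\vee\rangle\, h\otimes x\otimes(\hat v_j-s_\alpha\hat v_j).
\]
Expanding $s_\alpha\hat v_j$ via $s_\alpha v_l=v_l-\langle v_l,\alpha^\vee\rangle\alpha$ and discarding wedge terms that contain two or more copies of $\alpha$, this collapses to a double sum over ordered pairs $j\neq l$. The contributions of $(j,l)$ and $(l,j)$ will cancel, because moving $\alpha$ between positions $j$ and $l$ inside an $i$-fold wedge produces a relative sign $(-1)^{l-1-j}$ (for $j<l$), giving $(-1)^{j+1}+(-1)^{l+1}(-1)^{l-1-j}=(-1)^{j+1}+(-1)^{j}=0$.

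For (iii), I will compute $d_{i-1}\circ d_i$ directly on $h\otimes x\otimes v_1\wedge\cdots\wedge v_{i+1}$. Each output term falls into one of three shapes: $hv_jv_l\otimes x\otimes\Omega_{j,l}$, $hv_j\otimes v_l.x\otimes\Omega_{j,l}$, or $h\otimes v_jv_l.x\otimes\Omega_{j,l}$, where $\Omega_{j,l}$ denotes the $(i-1)$-fold wedge obtained by removing both $v_j$ and $v_l$. Because $S(V)\hookrightarrow\mathbb{H}$ is an algebra inclusion, $v_jv_l=v_lv_j$ in $\mathbb{H}$, and $\Omega_{j,l}=\Omega_{l,j}$; each such term then collects contributions from the two ordered pairs $(j,l)$ and $(l,j)$, whose signs differ by an overall factor of $-1$ via the classical Koszul sign bookkeeping, yielding pairwise cancellation. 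The main obstacle in the whole argument will be the sign analysis in (i); step (iii) is essentially the classical Koszul computation for the symmetric algebra $S(V)$ coupled with the $V$-action on $X$, and the non-commutativity of $\mathbb{H}$ never enters because only elements of $V$ (which commute in $\mathbb{H}$) appear as right multipliers on $h$.
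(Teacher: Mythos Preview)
Your argument is correct; it just takes a different route from the paper's proof. For well-definedness, the paper proceeds by induction on $i$: it sets $P^w=d_i(t_w\otimes x\otimes v_1\wedge\cdots\wedge v_{i+1})$ and $P_w=d_i(1\otimes t_w.x\otimes w(v_1)\wedge\cdots\wedge w(v_{i+1}))$, observes that $P^w-P_w$ lies in the subspace $1\otimes(\Res_WX\otimes\wedge^iV)$ (because $t_wv-w(v)t_w\in\mathbb{C}[W]$), and then checks from the original expressions that $d_{i-1}(P^w-P_w)=0$. Since $d_{i-1}$ is well-defined by induction and is manifestly injective on $1\otimes(\Res_WX\otimes\wedge^iV)$ when this is written in a basis of $V$, this forces $P^w=P_w$. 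The paper then simply says that $d^2=0$ is a straightforward verification.

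Your approach instead reduces to simple reflections and carries out the commutator calculation explicitly, showing that the $k_\alpha$-residual cancels pairwise by a wedge sign count; and your treatment of $d^2=0$ spells out the standard Koszul cancellation, correctly noting that only the commutativity of $S(V)\subset\mathbb{H}$ is used. Your method avoids the induction and is more hands-on, at the cost of having to keep careful track of the wedge signs; the paper's induction sidesteps that bookkeeping but needs the injectivity observation for $d_{i-1}$ on the $1\otimes(\cdot)$ subspace. Either way the content is the same. One small slip: in part (i) you write ``the diagonal $W$-action on $X\otimes\wedge^{i+1}V$'', but the balancing you use is in the \emph{target}, so it is $X\otimes\wedge^{i}V$.
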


\begin{proof}
We proceed by induction on $i$. It is easy to see that $\epsilon$ is well-defined. For convenience, we set $d_{-1}=\epsilon$. We now assume $i \geq 0$. To show $d_i$ is independent of the choice of a representative in $\mathbb{H} \otimes_{\mathbb{C}[W]}(\mathrm{Res}_WX \otimes \wedge^{i+1} V)$, the only non-trivial thing is to show
\begin{align} \label{eqn di map}
    d_i(t_w \otimes (x \otimes v_1\wedge \ldots \wedge v_{i+1})) =d_i(1 \otimes (t_w.x \otimes w(v_1)\wedge \ldots \wedge w(v_{i+1}))).
\end{align}
For simplicity, set
\begin{eqnarray*}
  P^w &=& d_i(t_w \otimes (x \otimes v_1\wedge \ldots \wedge v_{i+1}))  \\
      &=& t_w\sum_{j=1}^{i+1} (-1)^{j+1}(v_j \otimes (x \otimes v_1 \wedge \ldots \wedge \widehat{v}_j \wedge \ldots \wedge v_{i+1} )-  1 \otimes (v_j.x \otimes v_1\wedge \ldots \wedge \widehat{v}_j \wedge \ldots \wedge v_{i+1} ))
\end{eqnarray*}
and
\begin{eqnarray*}
  P_w &=& d_i(1 \otimes (t_w.x \otimes w(v_1)\wedge \ldots \wedge w(v_{i+1}))  \\
      &=& \sum_{j=1}^{i+1} (-1)^{j+1}w(v_j) \otimes (t_w.x \otimes w(v_1) \wedge \ldots \wedge \widehat{w(v_j)} \wedge \ldots \wedge w(v_{i+1})) \\
			& & \quad - \sum_{j=1}^{i+1} (-1)^{j+1}  \otimes (w(v_j)t_w.x \otimes w(v_1)\wedge \ldots \wedge \widehat{w(v_j)} \wedge \ldots \wedge w(v_{i+1})
\end{eqnarray*}
To show the equation (\ref{eqn di map}), it is equivalent to show $P^w=P_w$. Regard $\mathbb{C}[W]$ as a natural subalgebra of $\mathbb{H}$. By using the fact that $t_wv-w(v)t_w \in \mathbb{C}[W]$ for $w \in W$,  $P^w -P_w$ is an element of the form $1 \otimes u$ for some $u \in \mathrm{Res}_WX \otimes \wedge^i V$. Thus it suffices to show that $u=0$. To this end, a direct computation (from the original expressions of $P^w$ and $P_w$) shows that $d_{i-1}(P^w-P_w)=0$. By induction hypothesis, $d_{i-1}$ is well-defined and so $d_{i-1}(1 \otimes u)=d_{i-1}(P^w-P_w)=0$. Write $1 \otimes u$ of the form
\begin{align}\label{eqn expression d} 
1 \otimes u & = \sum_{1 \leq r_1 <  \ldots  < r_{i} \leq n}  1 \otimes (x_{r_1,\ldots, r_{i}}) \otimes e_{r_1}\wedge \ldots  \wedge e_{r_{i}},
\end{align}
where $x_{r_1,\ldots, r_{i}} \in \mathrm{Res}_WX$ and $e_1, \ldots, e_n$ is a fixed basis of $V$. By a direct computation of $d_{i-1}(1 \otimes u)$ from the expression (\ref{eqn expression d}), we have
\begin{align*}
& d_{i-1}(1 \otimes u) \\
 = &     \sum_{1 \leq r_1 <  \ldots < r_{i} \leq n} \sum_{j=1}^{i}(-1)^{j+1} e_{r_j} \otimes (x_{r_1,\ldots, r_{i}}) \otimes e_{r_1}\wedge \ldots \wedge \widehat{e}_{r_j} \wedge \ldots \wedge e_{r_{i}})  \\
                     & \quad -\sum_{1 \leq r_1 < \ldots < r_{i} \leq n}\sum_{j=1}^{i}(-1)^{j+1} 1 \otimes  e_{r_j}.(x_{r_1,\ldots, r_{i}}) \otimes e_{r_1}\wedge \ldots \wedge \widehat{e}_{r_j} \wedge \ldots \wedge e_{r_{i}})
\end{align*}
We have seen that $d_{i-1}(1\otimes u)=0$ and so $u=0$ by using linearly independence arguments.

Verifying $d^2=0$ is straightforward. 
%Indeed, for $i \geq 2$,
%\begin{align*}
% & d_{i-1}d_i(h \otimes x\otimes v_1 \wedge \ldots v_i) \\
%= & d_{i-1}\sum_{j=1}^i (-1)^{j+1}(h v_j \otimes x \otimes v_1 \wedge \ldots \wedge \widehat{v}_j \wedge \ldots \wedge v_i -h \otimes v_j. x \otimes v_1 \wedge \ldots \wedge \widehat{v}_j \wedge \ldots \wedge v_i )  \\
%=& \sum_{k<j}^i (-1)^{k+j}(h v_jv_k \otimes x \otimes v_1 \wedge \ldots \widehat{v}_k \ldots \widehat{v}_j \ldots \wedge v_i -h v_k\otimes v_j. x \otimes v_1 \wedge \ldots \widehat{v}_k \ldots \widehat{v}_j \ldots \wedge v_i )  \\
%& -\sum_{k<j}^i (-1)^{k+j}(h v_j\otimes v_k .x \otimes v_1 \wedge \ldots \widehat{v}_k \ldots \widehat{v}_j \ldots \wedge v_i -h \otimes v_kv_j. x \otimes v_1 \wedge \ldots \widehat{v}_k \ldots \widehat{v}_j \ldots \wedge v_i )  \\
%& +\sum_{j<k}^i (-1)^{k+j-1}(h v_jv_k \otimes x \otimes v_1 \wedge \ldots \widehat{v}_j \ldots \widehat{v}_k \ldots \wedge v_i -h v_k\otimes v_j x \otimes v_1 \wedge \ldots \widehat{v}_j \ldots \widehat{v}_k \ldots \wedge v_i )  \\
%& -\sum_{j<k}^i (-1)^{k+j-1}(h v_j\otimes v_k .x \otimes v_1 \wedge \ldots \widehat{v}_j \ldots \widehat{v}_k \ldots \wedge v_i -h \otimes v_kv_j. x \otimes v_1 \wedge \ldots \widehat{v}_j \ldots \widehat{v}_k \ldots \wedge v_i )  \\
%=&0
%\end{align*}
\end{proof}

\begin{theorem} \label{thm projective resol}
\begin{enumerate}
\item[(1)] For any $\mathbb{H}$-module $X$, the complex (\ref{eqn projective resolution}) forms a projective resolution for $X$.
\item[(2)] The global dimension of $\mathbb{H}$ is $\dim V$.
\end{enumerate}
\end{theorem}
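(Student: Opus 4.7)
The plan is to establish projectivity of each term in the sequence, prove exactness via a spectral sequence argument on the PBW filtration, and then pin down the global dimension by matching with a top-degree Ext computation. Projectivity is essentially free: for each $i$, the space $\mathrm{Res}_W X \otimes \wedge^i V$ is a finite dimensional $\mathbb{C}[W]$-module, so \leref{em proj inj obj} immediately gives that $\mathbb{H}\otimes_{\mathbb{C}[W]}(\mathrm{Res}_W X \otimes \wedge^i V)$ is a projective $\mathbb{H}$-module.

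For exactness, I would filter $\mathbb{H}$ by the PBW filtration $F^k\mathbb{H} = \{\sum_w t_w p_w : \deg p_w \leq k\}$, which is exhaustive, bounded below, and has $\mathrm{gr}\,\mathbb{H} \cong S(V)\rtimes\mathbb{C}[W]$. Put the shifted filtration $F^p C_i := F^{p-i}\mathbb{H}\otimes_{\mathbb{C}[W]}(\mathrm{Res}_W X \otimes \wedge^i V)$ on the complex. The crucial point is that in $d_i$, the first-type summands $hv_j\otimes\cdots$ stay in $F^p C_i$ while the second-type summands $h\otimes v_j.x\otimes\cdots$ drop into $F^{p-1}C_i$, so only the first-type terms survive on the associated graded. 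Identifying $\mathrm{gr}\,\mathbb{H}\otimes_{\mathbb{C}[W]}(-)$ with $S(V)\otimes(-)$, the associated graded complex becomes the classical Koszul complex $(S(V)\otimes\wedge^\bullet V)\otimes_{\mathbb{C}}\mathrm{Res}_W X$, which is exact except in degree zero where it computes $\mathrm{Res}_W X$. Since the filtration is bounded below, the resulting spectral sequence converges and collapses at $E^1$ (its page being supported at a single spot), yielding $H_i(C_\bullet)=0$ for $i\geq 1$ and identifying $H_0(C_\bullet)$ with $X$ via the augmentation $\epsilon$.

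For (2), the bound $\mathrm{gl.dim}\,\mathbb{H}\leq \dim V$ is immediate from (1). For the matching lower bound, I would apply $\mathrm{Hom}_{\mathbb{H}}(-,X)$ to (\ref{eqn projective resolution}) for a suitably chosen module $X$ and verify that the top Ext is nonvanishing. By Frobenius reciprocity this reduces to a $W$-equivariant cokernel computation for $\mathrm{Hom}_W(\mathrm{Res}_W X\otimes\wedge^{n-1}V,\mathrm{Res}_W X)\to\mathrm{Hom}_W(\mathrm{Res}_W X\otimes\wedge^n V,\mathrm{Res}_W X)$, which can be carried out explicitly for $X$ a generic principal series (where $\mathrm{Res}_W X\cong\mathbb{C}[W]$), giving $\mathrm{Ext}^n_{\mathbb{H}}(X,X)\neq 0$ and hence $\mathrm{gl.dim}\,\mathbb{H}\geq n$.

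The main obstacle is the filtration step: verifying cleanly that the second-type summand of $d_i$ really lands in $F^{p-1}C_i$ after passing through the $\otimes_{\mathbb{C}[W]}$ quotient requires careful disentangling of the $W$-action on $V$ from the defining relations of $\mathbb{H}$, particularly because one must confirm that the identification is well-defined on the level of representatives (cf. the check already carried out in \leref{em well defined d}). Once this calibration is correct, the rest is standard Koszul and spectral sequence machinery.
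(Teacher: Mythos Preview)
Your approach is correct and essentially matches the paper's. For (1), the paper also filters by polynomial degree and reduces to the classical Koszul complex on the associated graded; it phrases the passage from graded to filtered via an induction on long exact sequences rather than invoking a spectral sequence, but the content is identical. The ``main obstacle'' you flag is not a real obstacle: the second-type summand $h\otimes v_j.x\otimes\cdots$ has $h$ sitting in $F^{p-i-1}\mathbb{H}$ by hypothesis, hence lies in $F^{p-1}C_i$ by definition of the shifted filtration, and the $\otimes_{\mathbb{C}[W]}$ quotient causes no trouble since $\mathbb{C}[W]$ preserves filtration degree.

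For (2) the paper takes a cleaner route than your cokernel computation: it sets $X=\Ind_{S(V)}^{\mathbb{H}}\mathbb{C}v_\gamma$ for $\gamma$ regular and uses Frobenius reciprocity for $S(V)$-induction (exact since $\mathbb{H}$ is free over $S(V)$) to get $\Ext^i_{\mathbb{H}}(X,X)\cong\Ext^i_{S(V)}(\mathbb{C}v_\gamma,\Res_{S(V)}X)$. Regularity of $\gamma$ splits $\Res_{S(V)}X$ as $\bigoplus_{w}\mathbb{C}v_{w\gamma}$, and the only summand contributing is $\mathbb{C}v_\gamma$, so this reduces to $\Ext^i_{S(V)}(\mathbb{C}v_\gamma,\mathbb{C}v_\gamma)\cong\wedge^i V^\vee\neq 0$. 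Your direct computation via the $W$-equivariant cokernel would also work but requires tracking the $\widetilde{v}$-action explicitly, whereas the paper's argument sidesteps this entirely.
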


\begin{proof}  

 From Lemma \ref{lem well defined d}, it remains to show the exactness for (1). This can be proven by an argument which imposes a filtration on $\mathbb{H}$ and uses a long exact sequence (see for example \cite[Section 5.3.8]{HP} or \cite[Chapter IV Section 6]{Kn}). We provide some detail. Let $\mathbb{H}^r$ be the (vector) subspace of $\mathbb{H}$ spanned by the elements of the form
\[        t_w v_1^{n_1}\ldots v_l^{n_l} \quad \mbox{ for $w \in W$, $v_1, \ldots, v_l \in V$ },\]
with $n_1 +n_2+\ldots+n_l \leq r$. Note that $\mathbb{H}^r$ is still (left and right) invariant under the action of $W$. Let
\[  \mathbb{E}^{r,s} = \mathbb{H}^r \otimes_{\mathbb{C}[W]}  (\mathrm{Res}_W X \otimes \wedge^s V) .\]
Then the differential $d_{s-1}$ defines a map from $\mathbb{E}^{r,s}$ to $\mathbb{E}^{r+1,s-1}$. For convenience, also set $\mathbb{E}^{r+s+1,-1}=X$ and there is a map from $\mathbb{E}^{r+s,0}$ to $X$ and $d_{-1}=\epsilon$. Then for a positive integer $p$, we denote by $\mathcal E(p)$ the complex  $\left\{ \mathbb{E}^{r,s}, d_{s-1} \right\}_{r+s=p, s\geq -1}$. We now define a graded structure. Let
\[  \mathbb{F}^{r,s}=\mathbb{E}^{r,s}/ \mathbb{E}^{r-1,s} .
\]
and let $\overline{d}_{r,s}:\mathbb{F}^{r,s} \rightarrow \mathbb{F}^{r+1, s-1} $ be the induced map from $d_{s-1}$. Then $\left\{ \mathbb{F}^{r,s}, \overline{d}_{r,s} \right\}_{r+s=p}$ forms a complex for each $p$. Denote by $\mathcal F(p)$ for such complex. In fact, $\mathcal F(p)$ forms a standard Koszul complex and hence the homology $H^i(\mathcal F(p))=0$ for all $i$.

Now consider the following short exact sequences of the chain of complexes for $p \geq 1$:

\[\xymatrix{  & 0 \ar[d]  & 0 \ar[d]& 0\ar[d]  &  &   \\
0 & X \ar[d] \ar[l] & \mathbb{E}^{p-1,0} \ar[d]\ar[l]& \mathbb{E}^{p-2,1}\ar[d] \ar[l] & \cdots \ar[l] &   \\
0  & X\ar[d] \ar[l] & \mathbb{E}^{p,0} \ar[d]\ar[l] & \mathbb{E}^{p-1,1} \ar[d] \ar[l] & \cdots \ar[l] & \\
              0 & 0  \ar[l] \ar[d]     & \mathbb{F}^{p,0} \ar[l] \ar[d]  & \mathbb{F}^{p-1,0}     \ar[d]         \ar[l] & \cdots       \ar[l] &   \\
							 & 0       & 0         & 0              &  &   }
\]
The vertical map from $\mathbb{E}^{p-s-1,s}$ to  $\mathbb{E}^{p-s,s}$ is the natural inclusion map. Then we have the associated long exact sequence:
\[   \ldots \rightarrow H^{k+1}(\mathcal F(p)) \rightarrow H^k(\mathcal E(p-1))\rightarrow H^k(\mathcal E(p)) \rightarrow H^k(\mathcal F(p)) \rightarrow \ldots 
\]
Since $ H^k(\mathcal F(p))=H^{k+1}(\mathcal F(p))=0$, $H^k(\mathcal E(p-1))\cong H^k(\mathcal E(p))$. It remains to see $H^k(\mathcal E(0))=0$ for all $k$, but it follows from definitions.

We now prove (2). By (1), the global dimension of $\mathbb{H}$ is less than or equal to $\dim V$. It remains to prove the global dimension attains the upper bound. Let $\gamma \in V^{\vee}$ be a regular element and let $v_{\gamma}$ be a vector with weight $\gamma \in V^{\vee}$. Define $X=\Ind_{S(V)}^{\mathbb{H}}\mathbb{C}v_{\gamma}$. By Frobenius reciprocity and using $\gamma$ is regular, $\Ext_{\mathbb{H}}^i(X,X)=\Ext_{S(V)}^i(\mathbb{C}v_{\gamma}, \mathbb{C}v_{\gamma})\neq 0$ for all $i \leq \dim V$. This shows the global dimension has to be $\dim V$. 
\end{proof}

%As a consequence, we recover the following result in \cite{CT} for general finite-dimensional $\mathbb{H}$-modules.

\subsection{Alternate form of the Koszul-type resolution}

In this section, we give another form of the differential map $d_i$, which involves the terms $\widetilde{v}$ defined in (\ref{eqn v titlde}) below. There are some advantages for computations later. 

For $v \in V$, we define the following element in $\mathbb{H}$:
\begin{eqnarray} \label{eqn v titlde}
    \widetilde{v}&= v- \frac{1}{2} \sum_{\alpha \in R^+} k_{\alpha} \langle v, \alpha^{\vee} \rangle t_{s_{\alpha}} .
\end{eqnarray}
This element is used by Drinfield \cite{Dr} for the study of Yangians and also used by Barbasch-Ciubotaru-Trapa \cite{BCT} for the Dirac cohomology for graded affine Hecke algebras. An important property of the element is the following, which can be proved by an induction on the length of $w \in W$:

\begin{lemma} \cite[Proposition 2.10]{BCT} \label{lem tilde element}
 For any $w \in W$ and $v \in V$, $t_w\widetilde{v}=\widetilde{w(v)}t_w$.
\end{lemma}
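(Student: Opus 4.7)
The plan is to induct on $\ell(w)$, reducing to the case of simple reflections via the group-algebra relation $t_w t_{w'} = t_{ww'}$ (which holds in $\mathbb{H}$ since $w \mapsto t_w$ is a $\mathbb{C}$-algebra injection from $\mathbb{C}[W]$). The base case $w = e$ is trivial, and the inductive step is immediate: if $\ell(s_\alpha w') > \ell(w')$ for simple $\alpha$ then $t_{s_\alpha w'} = t_{s_\alpha} t_{w'}$, so
\[
t_{s_\alpha w'}\widetilde v = t_{s_\alpha}\bigl(t_{w'}\widetilde v\bigr) = t_{s_\alpha}\widetilde{w'(v)}\,t_{w'} = \widetilde{s_\alpha w'(v)}\,t_{s_\alpha}t_{w'} = \widetilde{(s_\alpha w')(v)}\,t_{s_\alpha w'},
\]
using the inductive hypothesis once and the $s_\alpha$-case once.

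Thus everything reduces to verifying $t_{s_\alpha}\widetilde v = \widetilde{s_\alpha(v)}\, t_{s_\alpha}$ for $\alpha \in \Pi$. Unfolding $\widetilde v$ via \eqref{eqn v titlde}, the claim splits into comparing the ``polynomial'' part with the ``reflection sum'' part. For the polynomial part, Definition \ref{def graded affine}(3) gives
\[
t_{s_\alpha} v = s_\alpha(v)\, t_{s_\alpha} + k_\alpha \langle v, \alpha^\vee\rangle.
\]
For the reflection-sum part, I would use $t_{s_\alpha} t_{s_\beta} = t_{s_\alpha s_\beta} = t_{s_{s_\alpha(\beta)} s_\alpha} = t_{s_{s_\alpha(\beta)}} t_{s_\alpha}$, together with the two standard facts that $s_\alpha$ permutes $R^+ \setminus \{\alpha\}$ (sending $\alpha \mapsto -\alpha$) and preserves the parameter function under the identification $k_{s_\alpha(\beta)} = k_\beta$. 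Reindexing $\gamma = s_\alpha(\beta)$ and using $\langle v,\beta^\vee\rangle = \langle s_\alpha(v), s_\alpha(\beta)^\vee\rangle$ converts
\[
\tfrac{1}{2}\sum_{\beta \in R^+} k_\beta \langle v,\beta^\vee\rangle\, t_{s_\alpha} t_{s_\beta}
\]
into a sum over $\gamma \in R^+\setminus\{\alpha\}$ of the form $\tfrac{1}{2} k_\gamma \langle s_\alpha(v),\gamma^\vee\rangle\, t_{s_\gamma}t_{s_\alpha}$, plus the single ``diagonal'' contribution $\tfrac{1}{2}k_\alpha\langle v,\alpha^\vee\rangle$ coming from $\beta=\alpha$ (using $t_{s_\alpha}^2 = 1$).

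The sum appearing in $\widetilde{s_\alpha(v)}\, t_{s_\alpha}$ differs only in the single term $\beta = \alpha$, where $\langle s_\alpha(v),\alpha^\vee\rangle = -\langle v,\alpha^\vee\rangle$. The resulting scalar discrepancy between the two sides is therefore $k_\alpha\langle v,\alpha^\vee\rangle$, which is exactly what the defining commutation relation contributes on the polynomial side. The main obstacle is purely bookkeeping: tracking the $\beta = \alpha$ term (which, because $t_{s_\alpha}^2 = 1$, produces a scalar rather than a reflection) and making the three signs line up — namely $s_\alpha\alpha = -\alpha$, the minus sign in $\langle s_\alpha(v),\alpha^\vee\rangle$, and the $-\tfrac12$ in front of the sum in \eqref{eqn v titlde}. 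Once these are organized as above, the equality is immediate.
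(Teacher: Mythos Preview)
Your proposal is correct and follows exactly the approach the paper indicates: the paper does not spell out a proof but states that the lemma ``can be proved by an induction on the length of $w \in W$'' and cites \cite[Proposition 2.10]{BCT}, which is precisely what you do. Your handling of the base case $w=s_\alpha$ --- reindexing the reflection sum via $\gamma=s_\alpha(\beta)$, isolating the $\beta=\alpha$ term, and matching the scalar $k_\alpha\langle v,\alpha^\vee\rangle$ against the commutation relation --- is the standard computation and is carried out correctly.
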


%\begin{proof}
%It suffices to show for the case that $w$ is a simple reflection $s_{\beta} \in W$.
%\begin{eqnarray*}
% t_{s_{\beta}}\widetilde{v} &=&  t_{s_{\beta}}\left( v- \frac{1}{2} \sum_{\alpha \in R^+} k_{\alpha} \langle v, \alpha^{\vee} \rangle t_{s_{\alpha}} \right)  \\
%                             &=&  s_{\beta}(v)t_{s_{\beta}}+k_{\beta}\langle v, \beta^{\vee}\rangle -\frac{1}{2}k_{\beta}\langle v, \beta^{\vee} \rangle -\frac{1}{2}\sum_{\alpha \in R^+\setminus \left\{\beta\right\} }  k_{\alpha} \langle  v, \alpha^{\vee} \rangle t_{s_{\beta}(\alpha)}  t_{s_{\beta}} \\
%														&=&   s_{\beta}(v)t_{s_{\beta}} -\frac{1}{2}k_{\beta}\langle v, s_{\beta}(\beta^{\vee})\rangle  -\frac{1}{2}\sum_{\alpha \in R^+\setminus \left\{\beta\right\} }  k_{\alpha} \langle  v, s_{\beta}(\alpha^{\vee}) \rangle t_{s_\alpha} t_{s_{\beta}} \\
%														&=& s_{\beta}(v)t_{s_{\beta}}-\frac{1}{2} \sum_{\alpha \in R^+} k_{\alpha} \langle s_{\beta}(v), \alpha^{\vee} \rangle t_{s_\alpha}t_{s_{\beta}}  \\
%														&=& \widetilde{s_{\beta}(v)} t_{s_{\beta}}
%\end{eqnarray*}

%\end{proof}

We consider the maps
$\widetilde{d}_i: \mathbb{H} \otimes_{\mathbb{C}[W]} (\Res_WX \otimes \wedge^{i+1} V) \rightarrow \mathbb{H} \otimes_{\mathbb{C}[W]}(\Res_WX \otimes \wedge^{i}V)$ as follows:
\begin{align}
 &  \widetilde{d}_i(h\otimes (x \otimes v_1 \wedge\ldots \wedge v_{i+1})) \\
=&\sum_{j=1}^{i+1} (-1)^{j+1}\left( h \widetilde{v}_j \otimes x \otimes v_1 \wedge \ldots \widehat{v}_j \ldots \wedge v_i -h \otimes \widetilde{v}_j. x \otimes v_1 \wedge \ldots \widehat{v}_j \ldots \wedge v_{i+1} \right) .
\end{align}
This definition indeed coincides with the one in the previous subsection:

\begin{proposition} \label{prop equal d}
$\widetilde{d}_i=d_i$. 
\end{proposition}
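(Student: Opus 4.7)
The plan is to prove $\widetilde{d}_i = d_i$ by showing the difference vanishes, after organizing it as a sum over positive roots $\alpha \in R^+$ and then pairing up terms.

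First I would use the definitions to write $\widetilde{v}_j - v_j = -\tfrac{1}{2}\sum_{\alpha \in R^+} k_\alpha \langle v_j,\alpha^\vee\rangle\,t_{s_\alpha} \in \mathbb{C}[W]$ and conclude that, on a typical generator $h\otimes(x\otimes v_1\wedge\cdots\wedge v_{i+1})$,
\[
(\widetilde{d}_i - d_i) = \sum_{j=1}^{i+1}(-1)^{j+1}\sum_{\alpha\in R^+}\bigl(-\tfrac{1}{2}k_\alpha\langle v_j,\alpha^\vee\rangle\bigr)\bigl(h t_{s_\alpha}\otimes x\otimes\omega_j - h\otimes t_{s_\alpha}.x\otimes\omega_j\bigr),
\]
where $\omega_j = v_1\wedge\cdots\wedge\widehat{v_j}\wedge\cdots\wedge v_{i+1}$. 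Next, using the $\mathbb{C}[W]$-balancing in the tensor product over $\mathbb{C}[W]$ together with the diagonal $W$-action on $\mathrm{Res}_W X\otimes\wedge^i V$, I would rewrite $h t_{s_\alpha}\otimes x\otimes\omega_j = h\otimes t_{s_\alpha}.x\otimes s_\alpha(\omega_j)$, so that the bracket above becomes $h\otimes t_{s_\alpha}.x\otimes(s_\alpha(\omega_j)-\omega_j)$. Expanding $s_\alpha(v_l)=v_l-\langle v_l,\alpha^\vee\rangle \alpha$ by multilinearity of the wedge and using $\alpha\wedge\alpha=0$, only the single-$\alpha$-insertion terms survive, giving
\[
s_\alpha(\omega_j)-\omega_j \;=\; -\sum_{k\neq j}\langle v_k,\alpha^\vee\rangle\,\omega_{j,k},
\]
where $\omega_{j,k}$ denotes $\omega_j$ with the remaining factor $v_k$ replaced by $\alpha$.

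Combining these steps, the contribution of each $\alpha$ to $(\widetilde d_i-d_i)$ reduces to
\[
\tfrac{1}{2}k_\alpha\, h\otimes t_{s_\alpha}.x\otimes \sum_{j\neq k}(-1)^{j+1}\langle v_j,\alpha^\vee\rangle\langle v_k,\alpha^\vee\rangle\,\omega_{j,k}.
\]
To show the inner sum vanishes, I would pair the ordered pairs $(j,k)$ and $(k,j)$ with $j<k$. A direct count of the transpositions needed to move $\alpha$ from position $j$ in $\omega_{k,j}$ to position $k-1$ in $\omega_{j,k}$ (both within the length-$i$ surviving tuple) yields $\omega_{k,j}=(-1)^{k-1-j}\omega_{j,k}$. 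Therefore the coefficient of $\omega_{j,k}$ in the pair is
\[
(-1)^{j+1}+(-1)^{k+1}(-1)^{k-1-j} \;=\; (-1)^{j+1}+(-1)^{j}\;=\;0,
\]
so the contribution of each $\alpha$ vanishes, proving $\widetilde{d}_i = d_i$.

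The whole argument is essentially a bookkeeping computation; the only delicate point is the sign reconciliation in the pairing argument, which is where I expect the main (minor) obstacle to lie. Note that working with the original $d_i$ was preferable for the well-definedness proof in Lemma~\ref{lem well defined d} because it only involves elements of $V$, while $\widetilde{d}_i$ has the advantage, for later computations, that $\widetilde{v}$ interacts cleanly with $t_w$ via $t_w\widetilde{v}=\widetilde{w(v)}t_w$ from Lemma~\ref{lem tilde element}.
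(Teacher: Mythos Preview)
Your proof is correct and follows essentially the same approach as the paper: both expand $\widetilde{v}_j - v_j$ as a $\mathbb{C}[W]$-term, push $t_{s_\alpha}$ across the tensor via the diagonal $W$-action to produce $s_\alpha(\omega_j)-\omega_j$, expand this via multilinearity of the wedge, and then observe that the resulting double sum over $(j,k)$ cancels in pairs. The paper writes the remaining wedge factors as $s_\alpha(v_l)$ rather than $v_l$ (which is harmless since they differ by a multiple of $\alpha$, already present in the wedge) and then leaves the final cancellation to ``standard computations,'' whereas you carry out the $(j,k)\leftrightarrow(k,j)$ sign-pairing explicitly; otherwise the arguments are the same.
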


\begin{proof}
 Recall that for $v \in V$,
\[  \widetilde{v} =v-\frac{1}{2}\sum_{\alpha \in R^+} k_{\alpha} \langle  v, \alpha^{\vee} \rangle t_{s_{\alpha}} .\]
Then 
\begin{eqnarray*}
& &\widetilde{v}_{r} \otimes (x \otimes v_{1} \wedge \ldots \wedge \widehat{v}_{r} \wedge \ldots \wedge v_{i+1}) -1 \otimes (\widetilde{v}_r.x \otimes v_1 \wedge \ldots \wedge \widehat{v}_r \wedge \ldots \wedge v_{i+1}) \\
&=&v_{r} \otimes (x \otimes v_{1} \wedge \ldots \wedge \widehat{v}_{r} \wedge \ldots \wedge v_{i+1}) -1 \otimes (v_r. x \otimes v_1 \wedge \ldots \wedge \widehat{v}_r \wedge \ldots \wedge v_{i+1})\\
& & \quad -\frac{1}{2}\sum_{\alpha \in R^+}k_{\alpha}\langle  v_{r}, \alpha^{\vee} \rangle  \otimes (t_{s_{\alpha}}.x) \otimes s_{\alpha}(v_{1})\wedge \ldots \wedge s_{\alpha}(\widehat{v}_{r})\wedge \ldots \wedge s_{\alpha}(v_{i+1}) \\
& & \quad +\frac{1}{2}\sum_{\alpha \in R^+} k_{\alpha} \langle  v_{r}, \alpha^{\vee} \rangle  \otimes (t_{s_{\alpha}}.x)  \otimes v_{1}\wedge \ldots \wedge \widehat{v}_{r} \wedge \ldots \wedge v_{i+1} \\
&=& v_{r} \otimes (x \otimes v_{1} \wedge \ldots \wedge \widehat{v}_{r} \wedge \ldots \wedge v_{i+1})-1\otimes (v_{r}.x \otimes v_{1} \wedge \ldots \wedge \widehat{v}_{r} \wedge \ldots \wedge v_{i+1})\\
& & -\frac{1}{2}\sum_{\alpha \in R^+} \sum_{p < r}(-1)^{p} k_{\alpha}\langle  v_{r}, \alpha^{\vee} \rangle \langle  v_{p}, \alpha^{\vee}  \rangle  \otimes (t_{s_{\alpha}}.x) \otimes \alpha \wedge s_{\alpha}(v_{1})\wedge \ldots  s_{\alpha}(\widehat{v}_{p})\wedge \ldots s_{\alpha}(\widehat{v}_{r})\wedge \ldots \wedge s_{\alpha}(v_{i+1}) \\
& & -\frac{1}{2}\sum_{\alpha \in R^+} \sum_{ r<p} (-1)^{p-1}k_{\alpha}\langle  v_{r}, \alpha^{\vee} \rangle \langle  v_{p}, \alpha^{\vee}  \rangle  \otimes (t_{s_{\alpha}}.x)\otimes \alpha \wedge s_{\alpha}(v_1)\wedge \ldots  s_{\alpha}(\widehat{v}_{r})\wedge \ldots s_{\alpha}(\widehat{v}_{p})\wedge \ldots \wedge s_{\alpha}(v_{i+1}) \\
\end{eqnarray*}
The second equality follows from the expression of $\widetilde{v}_r$. Taking the alternating sum of the above expression with some standard computations can verify $\widetilde{d}_i=d_i$. 
\end{proof}

\subsection{Complex for computing $\mathrm{Ext}$-groups} \label{sec complex ext}

We now use the resolution in Section \ref{sec kos resol} to construct a complex for computing $\mathrm{Ext}$-groups. Let $X$ and $Y$ be $\mathbb{H}$-modules.

Then taking the $\mathrm{Hom}_{\mathbb{H}}(.,Y)$ functor on the projective resolution of $X$ as the one in (\ref{eqn projective resolution}), we have the induced maps
\[ d_i^{\vee} : \mathrm{Hom}_{\mathbb{H}}(\mathbb{H} \otimes_{\mathbb{C}[W]} (\mathrm{Res}_W X \otimes^iV), Y) \rightarrow \mathrm{Hom}_{\mathbb{H}}(\mathbb{H} \otimes_{\mathbb{C}[W}(\mathrm{Res}_WX \otimes^{i+1}V), Y)  .
\] 
We write $d_{i,X}^{\vee,Y}$ for $d_i^{\vee}$ if we need to emphasize the roles of $X$ and $Y$. Using Frobenius reciprocity, we have the induced maps,
\[d_{i,W}^{\vee}: \mathrm{Hom}_{\mathbb{H}}(\mathrm{Res}_W X \otimes \wedge^{i} V, \mathrm{Res}_WY) \rightarrow \mathrm{Hom}_{\mathbb{H}}(\mathrm{Res}_W X \otimes \wedge^{i+1} V, \mathrm{Res}_WY) .
\]
We again write $d_{i, X, W}^{\vee, Y}$ for $d_{i, W}^{\vee}$ if we need to emphasize the roles of $X$ and $Y$. Then by using the Frobenius reciprocity, we have an induced complex 
\begin{align}  \nonumber 0 \leftarrow \mathrm{Hom}_W(\Res_WX \otimes \wedge^n V, \mathrm{Res}_WY)   \nonumber
      \stackrel{d_{n-1, W}^{\vee}}{\leftarrow} \ldots & \stackrel{d_{i, W}^{\vee}}{\leftarrow} \mathrm{Hom}_{W}(\Res_WX \otimes \wedge^i V, \mathrm{Res}_WY)  \\  
		 \label{eqn resolution hom}
			 & \stackrel{d_{i-1, W}^{\vee}}{\leftarrow} \ldots  \stackrel{d_{0,W}^{\vee}}{\leftarrow} \mathrm{Hom}_{W}(\Res_WX, \mathrm{Res}_WY)  \stackrel{}{\leftarrow}  0,
\end{align}
where the map $d_{i,W}^{\vee}$ can be explicitly written as:
\begin{align*}  d_{i,W}^{\vee}(\eta)(x \otimes v_1 \wedge \dots \wedge v_{i+1})	 &=\sum_{j=1}^{i+1} (-1)^{j+1}\widetilde{v}_j.\eta(x \otimes v_1 \wedge \ldots \wedge \widehat{v}_j\wedge \ldots \wedge v_{i+1})  \\
  &    \quad \quad -\sum_{j=1}^{i+1} (-1)^{j+1}\eta(\widetilde{v}_j.x \otimes v_1 \wedge \ldots \wedge \widehat{v}_j\wedge \ldots \wedge v_{i+1}) ,
\end{align*}
where the action of $\widetilde{v}_j$ on the term $\eta(x \otimes v_1 \wedge \ldots \wedge \widehat{v}_j\wedge \ldots \wedge v_{i+1})$ is via the action of $\widetilde{v}_j$ on $Y$ and the action of $\widetilde{v}_j$ on the term $x$ is via the action of $\widetilde{v}_j$ on $X$.

Thus we obtain the following:
\begin{proposition} \label{prop complex ext gp}
Let $X$ and $Y$ be $\mathbb{H}$-modules. Then $\mathrm{Ext}^i_{\mathbb{H}}(X, Y)$ is naturally isomorphic to the $i$-th homology of the complex in (\ref{eqn resolution hom}).

\end{proposition}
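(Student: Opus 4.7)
The plan is to obtain the complex \eqref{eqn resolution hom} by applying $\mathrm{Hom}_\mathbb{H}(-,Y)$ to the projective resolution \eqref{eqn projective resolution} and then transporting everything through Frobenius reciprocity. Since every term $\mathbb{H}\otimes_{\mathbb{C}[W]}(\mathrm{Res}_WX\otimes\wedge^iV)$ is projective by Lemma \ref{lem proj inj obj}, and since this is a resolution of $X$ by Theorem \ref{thm projective resol}(1), the definition of $\mathrm{Ext}$ immediately identifies $\mathrm{Ext}^i_\mathbb{H}(X,Y)$ with the $i$-th cohomology of
\[
0\to \mathrm{Hom}_\mathbb{H}(\mathbb{H}\otimes_{\mathbb{C}[W]}\mathrm{Res}_WX,Y)\stackrel{d_0^\vee}{\to}\cdots\stackrel{d_{n-1}^\vee}{\to}\mathrm{Hom}_\mathbb{H}(\mathbb{H}\otimes_{\mathbb{C}[W]}(\mathrm{Res}_WX\otimes\wedge^nV),Y)\to 0.
\]

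Next, I would invoke the Frobenius reciprocity isomorphism of the preceding lemma, with $U=\mathrm{Res}_WX\otimes\wedge^iV$, to get a natural $\mathbb{C}$-linear isomorphism
\[
\Phi_i:\mathrm{Hom}_\mathbb{H}\bigl(\mathbb{H}\otimes_{\mathbb{C}[W]}(\mathrm{Res}_WX\otimes\wedge^iV),Y\bigr)\stackrel{\sim}{\longrightarrow}\mathrm{Hom}_W(\mathrm{Res}_WX\otimes\wedge^iV,\mathrm{Res}_WY),
\]
sending $f$ to the map $u\mapsto f(1\otimes u)$. By naturality these $\Phi_i$ assemble into an isomorphism of graded vector spaces between the two complexes, so it only remains to check that the transported differential $\Phi_{i+1}\circ d_i^\vee\circ\Phi_i^{-1}$ matches the explicit formula for $d_{i,W}^\vee$ given above \eqref{eqn resolution hom}.

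The heart of the argument is this last compatibility check, and here I would use the alternate form $\widetilde d_i$ from Proposition \ref{prop equal d} rather than $d_i$ itself. The key feature of $\widetilde v$ is Lemma \ref{lem tilde element}: $t_w\widetilde v=\widetilde{w(v)}t_w$, which means that each summand $h\widetilde v_j\otimes x\otimes(\cdots)$ and $h\otimes \widetilde v_j.x\otimes(\cdots)$ is well-defined modulo the $\mathbb{C}[W]$-balancing relation without producing correction terms from the cross-relation $t_{s_\alpha}v-s_\alpha(v)t_{s_\alpha}=k_\alpha\langle v,\alpha^\vee\rangle$. Concretely, given $\eta\in\mathrm{Hom}_W(\mathrm{Res}_WX\otimes\wedge^iV,\mathrm{Res}_WY)$ corresponding to $f\in\mathrm{Hom}_\mathbb{H}$ via $\Phi_i$, one computes
\begin{align*}
(\Phi_{i+1}(d_{i}^\vee f))(x\otimes v_1\wedge\cdots\wedge v_{i+1})
&= f\bigl(\widetilde d_i(1\otimes x\otimes v_1\wedge\cdots\wedge v_{i+1})\bigr)\\
&=\sum_{j=1}^{i+1}(-1)^{j+1}\Bigl(f(\widetilde v_j\otimes x\otimes v_1\wedge\cdots\widehat v_j\cdots)-f(1\otimes\widetilde v_j.x\otimes v_1\wedge\cdots\widehat v_j\cdots)\Bigr),
\end{align*}
and since $f$ is $\mathbb{H}$-linear the first term becomes $\widetilde v_j\cdot\eta(x\otimes v_1\wedge\cdots\widehat v_j\cdots)$ while the second becomes $\eta(\widetilde v_j.x\otimes v_1\wedge\cdots\widehat v_j\cdots)$. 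This recovers the stated formula for $d_{i,W}^\vee(\eta)$.

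The main obstacle I anticipate is a bookkeeping one: ensuring that the various terms appearing after pushing $\widetilde v_j$ across $\otimes_{\mathbb{C}[W]}$ really do match cleanly. That is precisely why one invokes $\widetilde v$ instead of $v$; with $v$ alone the computation would produce extra $t_{s_\alpha}$-contributions that must be seen to cancel in the alternating sum, as in the proof of Proposition \ref{prop equal d}. Working with $\widetilde v$ from the outset, via Lemma \ref{lem tilde element}, bypasses this and yields the displayed formula directly, completing the proof.
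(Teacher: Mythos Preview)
Your proposal is correct and follows essentially the same approach as the paper: apply $\mathrm{Hom}_{\mathbb{H}}(-,Y)$ to the Koszul-type resolution, use Frobenius reciprocity to pass to $\mathrm{Hom}_W$, and identify the transported differential via the alternate form $\widetilde d_i=d_i$ from Proposition~\ref{prop equal d}. The paper presents this material as the discussion immediately preceding the proposition rather than as a separate proof, but your write-up simply makes explicit the compatibility check that the paper leaves implicit.
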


An immediate consequence is the following:

\begin{corollary} \label{cor finite dim ext}
Let $X$ and $Y$ be finite-dimensional $\mathbb{H}$-modules. Then
\[ \mathrm{dim} \mathrm{Ext}^i_{\mathbb{H}}(X, Y) < \infty. \]
\end{corollary}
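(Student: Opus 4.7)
The plan is to deduce the corollary immediately from Proposition \ref{prop complex ext gp}, which identifies $\mathrm{Ext}^i_{\mathbb{H}}(X,Y)$ with the $i$-th cohomology of the complex
\[
\mathrm{Hom}_W(\mathrm{Res}_W X \otimes \wedge^i V, \mathrm{Res}_W Y), \quad 0 \leq i \leq n,
\]
with differentials $d_{i,W}^{\vee}$. Since extensions are computed as a subquotient of one of the terms of this complex, it suffices to show each term is finite-dimensional.

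The key observation is that finite-dimensionality of each cochain group is immediate from the hypotheses: $V$ is finite-dimensional, so $\wedge^i V$ is finite-dimensional; $\mathrm{Res}_W X$ and $\mathrm{Res}_W Y$ are finite-dimensional because $X$ and $Y$ are; hence the ambient space $\mathrm{Hom}_{\mathbb{C}}(\mathrm{Res}_W X \otimes \wedge^i V, \mathrm{Res}_W Y)$ is finite-dimensional, and $\mathrm{Hom}_W$ is a subspace of it.

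Therefore $\mathrm{Ext}^i_{\mathbb{H}}(X,Y)$, being a subquotient of the finite-dimensional space $\mathrm{Hom}_W(\mathrm{Res}_W X \otimes \wedge^i V, \mathrm{Res}_W Y)$, is finite-dimensional. There is no real obstacle here; the content of the corollary is entirely carried by the existence of the finite-length Koszul-type resolution of Theorem \ref{thm projective resol} with projective terms of the form $\mathbb{H} \otimes_{\mathbb{C}[W]}(\mathrm{Res}_W X \otimes \wedge^i V)$, so the proof will be one short paragraph invoking Proposition \ref{prop complex ext gp}.
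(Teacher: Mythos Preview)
Your proposal is correct and matches the paper's approach exactly: the paper simply states that the corollary is ``an immediate consequence'' of Proposition~\ref{prop complex ext gp}, and your argument spells out precisely why---each cochain group $\mathrm{Hom}_W(\mathrm{Res}_W X \otimes \wedge^i V, \mathrm{Res}_W Y)$ is finite-dimensional, so the cohomology is as well.
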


%\begin{proof}
%Since $X$ and $Y$ are finite dimensional, $\mathrm{Hom}_{W}(\Res_WX \otimes \wedge^i V, \mathrm{Res}_WY)$ is finite dimensional. Then the statement follows from Proposition \ref{prop complex ext gp}.
%\end{proof}

It is not hard to see that $d_{i,W}^{\vee}$ can be naturally extended to a map from $\mathrm{Hom}_{\mathbb{C}}(\Res_W X \otimes \wedge^iV, \mathrm{Res}_WY)$ to $\mathrm{Hom}_{\mathbb{C}}(\Res_WX \otimes \wedge^{i+1}V, \mathrm{Res}_WY)$. We denote the map by $\overline{d}_i^{\vee}$, which will be used in Section \ref{sec complex duals}.

\section{Duality for $\mathrm{Ext}$-groups} \label{s dua thm}

In this section, we prove a duality result for the $\mathrm{Ext}$-groups of graded affine Hecke algebra modules, which is an analogue of some classical dualities such as Poincar\'e duality or Serre duality (also see Poincar\'e duality for real reductive groups in \cite[Theorem 6.10]{Kn}). 

We keep using the notation from Section \ref{s resol h mod}.

\subsection{$\theta$-action and $\theta$-dual} \label{ss theta action}
We define an involution $\theta$ on $\mathbb{H}$ in this section. This $\theta$ is not needed in stating the duality result (Theorem \ref{thm poin dua}), but it closely relates to the $*$ and $\bullet$ operations defined in the next section.

Let $w_0$ be the longest element in $W$. Let $\theta$ be an involution on $\mathbb{H}$ characterized by
\begin{eqnarray}
\label{eqn involution} 
\theta(v)=-w_0(v) \mbox{ for any $v \in V$}, \mbox{ and } \quad \theta(t_w)=t_{w_0ww_0^{-1}} \mbox{ for any $w \in W$} ,
\end{eqnarray}
where $w_0$ acts on $v$ as the action on the reflection representation of $W$. Since $\theta(\Pi)=\Pi$, $\langle ., .\rangle$ is $W$-invariant and $k_{\alpha}=k_{\theta(\alpha)}$ for any $\alpha \in \Pi$, it is straightforward to verify $\theta$ defines an automorphism on $\mathbb{H}$.

Note that $\theta$ also induces an action on $V^{\vee}$, still denoted $\theta$. For $\alpha \in R$, since $w_0(\alpha^{\vee})=w_0(\alpha)^{\vee}$, we also have $\theta(\alpha^{\vee})=\theta(\alpha)^{\vee}$.

Recall that for $v \in V$, $\widetilde{v}$ is defined in (\ref{eqn v titlde}). The following lemma follows from the definitions.
\begin{lemma} \label{lem theta v tilde}
For any $v \in V$, $\theta(\widetilde{v})=\widetilde{\theta(v)}$.
\end{lemma}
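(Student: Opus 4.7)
The plan is a direct computation: expand both sides using the definition $\widetilde{v} = v - \tfrac{1}{2}\sum_{\alpha \in R^+} k_\alpha \langle v, \alpha^\vee\rangle t_{s_\alpha}$, apply the defining formulas for $\theta$, and then match the two expressions via a change of summation variable.

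First I would compute $\theta(\widetilde{v})$ by distributing $\theta$ across the defining sum. The linear term $v$ produces $\theta(v) = -w_0(v)$, which is exactly the linear term of $\widetilde{\theta(v)}$. For the reflection terms, using $\theta(t_{s_\alpha}) = t_{w_0 s_\alpha w_0^{-1}} = t_{s_{w_0(\alpha)}}$ (since conjugation by $w_0$ sends $s_\alpha$ to $s_{w_0(\alpha)}$), the sum becomes $-\tfrac{1}{2}\sum_{\alpha \in R^+} k_\alpha \langle v, \alpha^\vee\rangle t_{s_{w_0(\alpha)}}$.

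Next I would reparametrize by setting $\beta = -w_0(\alpha)$; since $w_0$ sends $R^+$ to $R^-$, this bijects $R^+$ with itself. Under this substitution I would use three compatibilities: $s_{w_0(\alpha)} = s_{-\beta} = s_\beta$; the $W$-invariance of $k$ together with $k_\gamma = k_{-\gamma}$ gives $k_\alpha = k_\beta$; and the identities $(-w_0(\beta))^\vee = -w_0(\beta^\vee)$ together with $w_0^{-1} = w_0$ give $\langle v, \alpha^\vee \rangle = -\langle w_0(v), \beta^\vee\rangle$. Inserting these produces
\[
\theta(\widetilde{v}) = -w_0(v) + \tfrac{1}{2}\sum_{\beta \in R^+} k_\beta \langle w_0(v), \beta^\vee \rangle t_{s_\beta}.
\]
On the other hand, applying the definition of $\widetilde{\,\cdot\,}$ to $\theta(v) = -w_0(v)$ gives the same expression, which finishes the proof.

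There is no real obstacle here; the only thing to watch is the bookkeeping of signs and the fact that $k$ extends $W$-equivariantly to all of $R$ (including negatives), so that the reindexing $\alpha \mapsto -w_0(\alpha)$ is compatible with the parameter function. The lemma will then be used downstream to ensure that the $\theta$-twist interacts well with the Drinfeld elements $\widetilde{v}$, which is needed when comparing the $\ast$, $\bullet$, and $\iota$ duals on the Koszul-type complex of Section \ref{sec complex ext}.
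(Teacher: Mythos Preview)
Your proposal is correct and is precisely the direct verification the paper has in mind; the paper's own proof consists of the single sentence that the lemma ``follows from the definitions,'' and your computation spells out exactly those definitions and the reindexing $\alpha\mapsto -w_0(\alpha)$ on $R^+$ that makes both sides match.
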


%\begin{proof}
%\begin{eqnarray*}
%  \theta(\widetilde{v}) &=& \theta\left(v-\frac{1}{2}\sum_{\alpha \in R^+}c_{\alpha}\langle \alpha, v \rangle s_{\alpha}\right) \\
%	                      &=& \theta(v)-\frac{1}{2}\sum_{\alpha \in R^+}c_{\alpha}\langle \alpha, v \rangle s_{\theta(\alpha)} \\
%											&=& \theta(v)-\frac{1}{2}\sum_{\alpha \in R^+}c_{\theta(\alpha)}\langle \theta(\alpha), v \rangle s_{\alpha} \\
%												&=& \theta(v)-\frac{1}{2}\sum_{\alpha \in R^+}c_{\alpha} \langle \alpha, \theta(v) \rangle s_{\alpha} \\
%												&=& \widetilde{\theta(v)} 
%\end{eqnarray*}
%\end{proof}

\begin{definition}
For an $\mathbb{H}$-module $X$, define $\theta(X)$ to be the $\mathbb{H}$-module such that $\theta(X)$ is isomorphic to $X$ as vector spaces and the $\mathbb{H}$-action is determined by:
\[   \pi_{\theta(X)}(h)x=\pi_X(\theta(h))x ,\]
where $\pi_X$ and $\pi_{\theta(X)}$ are the maps defining the action of $\mathbb{H}$ on $X$ and $\theta(X)$ respectively.
\end{definition}

\subsection{*-dual and $\bullet$-dual} \label{sec duals}

In this section, we study two anti-involutions on $\mathbb{H}$. These two anti-involutions are studied in \cite{BC}, but we make a slight variation for our need. More precisely, those anti-involutions are linear rather than Hermitian-linear. The linearity will make some construction easier. For instance, it is easier to make the identification of spaces in Section \ref{sec complex duals}.

Define $* : \mathbb{H} \rightarrow \mathbb{H}$ to be the linear anti-involution determined by 
\[    v^*=t_{w_0} \theta(v)t_{w_0}^{-1} \quad \mbox{ for $v \in V$} , \quad t_w^*=t_w^{-1} \quad \mbox{ for $w \in W$ }. 
\]

Define $\bullet: \mathbb{H} \rightarrow \mathbb{H}$ to be another linear anti-involution determined by
\[  v^{\bullet}=v \quad \mbox{ for $v \in V$ }, \quad t_w^{\bullet}=t_w^{-1} \quad \mbox{ for $w \in W$ }.
\]

\begin{definition} \label{def star bullet duals}
Let $X$ be an $\mathbb{H}$-module. A map $f: X \rightarrow \mathbb{C}$ is said to be a linear functional if $f(\lambda x_1+x_2)=\lambda f(x_1)+f(x_2)$ for any $x_1, x_2 \in X$ and $\lambda \in \mathbb{C}$. 
 The {\it $*$-dual of $X$}, denoted by $X^*$, is the space of linear functionals of $X$ with the action of $\mathbb{H}$ determined by
\begin{align} \label{eqn star hermitian}
  (h.f)(x)=f(h^*.x) \quad \mbox{ for any $x \in X$ }  . 
\end{align}
We similarly define {\it $\bullet$-dual} of $X$, denoted by $X^{\bullet}$, by replacing $h^*$ with $h^{\bullet}$ in equation (\ref{eqn star hermitian}).

When $X$ is a finite-dimensional $\mathbb{H}$-module, we have $(X^*)^*=X$ and $(X^{\bullet})^{\bullet}=X$.
\end{definition}

\begin{lemma} \label{lem star form}
Let $X$ be an $\mathbb{H}$-module. Define a bilinear pairing $\langle , \rangle_X^*: X^* \times X \rightarrow \mathbb{C}$ (resp. ${}^{\bullet}\langle , \rangle_X: X^{\bullet} \times X \rightarrow \mathbb{C}$) such that $\langle f, x \rangle^*_X=f(x)$ (resp. ${}^{\bullet}\langle f, x \rangle_X=f(x)$). (We reserve $\langle , \rangle^{\bullet}_X$ for the use of another pairing later.) Then 
\begin{enumerate}
\item[(1)] for $v \in V$, $\langle \widetilde{v}.f, x \rangle^*_X = \langle f, -\widetilde{v}.x \rangle^*_X$ (resp. ${}^{\bullet}\langle \widetilde{v}.f, x \rangle_X ={}^{\bullet}\langle f, \widetilde{v}.x \rangle_X$),
\item[(2)] for $w \in W$, $\langle t_w.f, x \rangle^*_X = \langle f, t_w^{-1}.x \rangle^*_X$ (resp. ${}^{\bullet}\langle t_w.f, x \rangle_X = {}^{\bullet}\langle f, t_w^{-1}.x \rangle_X$),
\item[(3)] $\langle , \rangle^*_X$ (resp. ${}^{\bullet}\langle , \rangle_X$) is non-degenerate. 
\end{enumerate}
\end{lemma}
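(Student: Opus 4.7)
The plan is to verify all three statements by direct manipulation of the definitions of the anti-involutions $*$ and $\bullet$ in Section \ref{sec duals} together with the defining identity $(h.f)(x) = f(h^{*}.x)$ from Definition \ref{def star bullet duals}.

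Part (2) is immediate: since $t_w^{*} = t_w^{-1}$, we have $\langle t_w.f, x\rangle_X^{*} = (t_w.f)(x) = f(t_w^{-1}.x) = \langle f, t_w^{-1}.x\rangle_X^{*}$, and the identical argument applies to ${}^{\bullet}\langle \cdot, \cdot\rangle_X$ because $t_w^{\bullet} = t_w^{-1}$ as well.

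For part (1) I would first reduce to computing $\widetilde{v}^{*}$ and $\widetilde{v}^{\bullet}$ inside $\mathbb{H}$. The $\bullet$ case is the easy one: applying $\bullet$ termwise to the expansion (\ref{eqn v titlde}), using $v^{\bullet} = v$ and $t_{s_\alpha}^{\bullet} = t_{s_\alpha}^{-1} = t_{s_\alpha}$ (as $s_\alpha^2 = 1$ in $W$), one immediately obtains $\widetilde{v}^{\bullet} = \widetilde{v}$, which yields the identity. The $*$ case needs the stronger claim $\widetilde{v}^{*} = -\widetilde{v}$. To see this I would apply $*$ termwise to (\ref{eqn v titlde}) to obtain $\widetilde{v}^{*} = t_{w_0}\theta(v)t_{w_0}^{-1} - \tfrac{1}{2}\sum_{\alpha \in R^+} k_\alpha \langle v, \alpha^{\vee}\rangle t_{s_\alpha}$, use $\theta(v) = -w_0(v)$, and then rewrite $t_{w_0} w_0(v)t_{w_0}^{-1}$ by means of the intertwining property $t_{w_0}\widetilde{w_0(v)}t_{w_0}^{-1} = \widetilde{v}$ from Lemma \ref{lem tilde element}. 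After re-indexing the resulting sum via $\beta = w_0(\alpha)$ and using $w_0(R^+) = R^-$, the $W$-invariance of $\langle \cdot, \cdot\rangle$, and the $W$-equivariance of $\alpha \mapsto k_\alpha$, the correction terms combine to produce $\widetilde{v}^{*} = -\widetilde{v}$.

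Part (3) is the standard nondegeneracy of the evaluation pairing between a vector space and its full linear dual: if $x \neq 0$ then extending $\{x\}$ to a basis produces an $f \in X^{*}$ with $f(x) = 1$, and $f \neq 0$ in $X^{*}$ means by definition that $f(x) \neq 0$ for some $x$; the same argument covers $X^{\bullet}$. The main obstacle is the $*$-case of part (1): although each step is elementary, the sign bookkeeping is delicate, since one must correctly track the interplay between the sign in $\theta(v) = -w_0(v)$, the sign flip coming from $w_0$ exchanging $R^+$ and $R^-$, and the fact that $t_{s_\alpha}^{*} = t_{s_\alpha}$, and verify that these three sources of signs combine so that $\widetilde{v}^{*} = -\widetilde{v}$ rather than $+\widetilde{v}$.
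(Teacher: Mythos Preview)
Your proposal is correct and follows essentially the same route as the paper: both arguments reduce part (1) to the identities $\widetilde{v}^{\bullet}=\widetilde{v}$ and $\widetilde{v}^{*}=-\widetilde{v}$, with the latter established via Lemma \ref{lem tilde element}. The only cosmetic difference is that the paper packages the $*$-computation as the single chain $(\widetilde{v})^{*}=t_{w_0}\theta(\widetilde{v})t_{w_0}^{-1}=\widetilde{w_0\theta(v)}=-\widetilde{v}$, invoking Lemma \ref{lem theta v tilde} to pass $\theta$ inside the tilde, whereas you carry out the equivalent re-indexing $\beta=-w_0(\alpha)$ by hand; the content is identical.
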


\begin{proof}
We first consider the $*$-operation. Note that $(\widetilde{v})^*=t_{w_0}\theta(\widetilde{v})t_{w_0}^{-1}=\widetilde{w_0\theta(v)}=-\widetilde{v}$, where the second equality follows from Lemma \ref{lem tilde element} and Lemma \ref{lem theta v tilde}. This implies (1). Other assertions follow immediately from the definitions. 

For the $\bullet$-operation, we have $(\widetilde{v})^{\bullet}=\widetilde{v}$ from the definitions. This implies (1). Other assertions again follow from the definitions.
\end{proof}

\begin{lemma} \label{lem two dual theta}
Let $X$ be an $\mathbb{H}$-module. Then $X^* \cong \theta(X)^{\bullet}$. 
\end{lemma}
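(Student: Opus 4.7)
The plan is to exhibit a concrete $\mathbb{H}$-module isomorphism $\Phi: X^{*} \to \theta(X)^{\bullet}$ by conjugation with the action of $t_{w_0}$. Both modules share the same underlying vector space, namely the linear dual of $X$, so the question reduces to identifying the right twist.

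The first step is the algebraic comparison of the anti-involutions. I would verify the identity
\[
\theta(h^{\bullet}) = t_{w_0}^{-1}\, h^{*}\, t_{w_0} \qquad \text{for all } h \in \mathbb{H}.
\]
Since both sides are anti-linear anti-involutions, it suffices to check this on the generators. For $v \in V$ one has $v^{*} = t_{w_0}\theta(v) t_{w_0}^{-1}$ by definition and $v^{\bullet}=v$, so both sides equal $\theta(v)$. For $w \in W$ one has $t_w^{*}=t_w^{-1}=t_w^{\bullet}$, and using $w_0^{-1}=w_0$, conjugation by $t_{w_0}^{-1}$ gives $t_{w_0 w^{-1} w_0}=\theta(t_w^{-1})=\theta(t_w^{\bullet})$.

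The second step is to define $\Phi: X^{*} \to \theta(X)^{\bullet}$ by
\[
\Phi(f)(x) = f(t_{w_0}\cdot x),
\]
where the action of $t_{w_0}$ is the original $\mathbb{H}$-action on $X$. Since $t_{w_0}$ acts invertibly on $X$, $\Phi$ is a linear isomorphism of the underlying dual spaces, with inverse $\Phi^{-1}(g)(x)=g(t_{w_0}^{-1}\cdot x)$.

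The final step is to check $\mathbb{H}$-equivariance. Unwinding the definition of the $\theta(X)^{\bullet}$-action, the action of $h$ on $g\in\theta(X)^{\bullet}$ satisfies $(h\cdot g)(x) = g(\theta(h^{\bullet})\cdot_X x)$. Therefore
\[
(h\cdot\Phi(f))(x) = \Phi(f)(\theta(h^{\bullet})\cdot_X x) = f(t_{w_0}\theta(h^{\bullet})\cdot_X x),
\]
while
\[
\Phi(h\cdot f)(x) = (h\cdot f)(t_{w_0}\cdot x) = f(h^{*}\, t_{w_0}\cdot x).
\]
The identity from the first step rewrites $h^{*}t_{w_0} = t_{w_0}\theta(h^{\bullet})$, and equality of the two expressions follows. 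There is no substantive obstacle here: the only thing to be careful about is keeping track of which action of $\mathbb{H}$ one is using (the original action on $X$ versus the $\theta$-twisted action on $\theta(X)$), and the use of $w_0^{-1}=w_0$ when simplifying the Weyl group conjugation.
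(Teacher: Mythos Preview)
Your proof is correct and takes essentially the same approach as the paper: both produce the isomorphism by twisting the linear dual with the action of $t_{w_0}$, and the verification reduces to the identity $h^{*}t_{w_0}=t_{w_0}\theta(h^{\bullet})$ on generators. Your version is in fact more complete, since you check both $v\in V$ and $t_w$, whereas the paper only displays the $t_w$ computation; one small slip is that you call the maps ``anti-linear'' when they are linear anti-involutions (the paper explicitly takes the linear, not Hermitian, versions), but this does not affect the argument.
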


\begin{proof}
We define a map $F: \theta(X)^{\bullet} \rightarrow X^*$, $f \mapsto t_{w_0}^{-1}.f$ (where $t_{w_0}$ acts on $f$ by the action of $\mathbb{H}$ on $\theta(X)^{\bullet}$. In order to distinguish the action of $\mathbb{H}$ on the modules $X^*$ and $\theta(X)^{\bullet}$, we use the $\pi_{X^*}:\mathbb{H} \rightarrow \mathrm{End}_{\mathbb{C}}(X^*)$ and $\pi_{\theta(X)^{\bullet}}:\mathbb{H} \rightarrow \mathrm{End}_{\mathbb{C}}(\theta(X)^{\bullet})$ to denote the corresponding actions of $\mathbb{H}$ respectively. Now it is straightforward to verify: for $w \in W$, 
\begin{align*}
 F(\pi_{X^*}(t_w)f)(x) &=(\pi_{\theta(X)^{\bullet}}(t_{w_0})\pi_{X^*}(t_w)f)(x)  \\
                       &= f(t_{w_0}t_w.x) \\
											 &= (\pi_{\theta(X)^{\bullet}}(t_w)\pi_{\theta(X)^{\bullet}}(t_{w_0})f)(x) \\
											 &= (\pi_{\theta(X)^{\bullet}}(t_w)F(f))(x) \qedhere
\end{align*}
\end{proof}
\subsection{Pairing for $\wedge^i V$ and $\wedge^{n-i} V$} \label{ss pairing wedge V}
Fix an ordered basis $e_1, \ldots, e_n$ for $V$. We define a non-degenerate bilinear pairing $\langle ,\rangle_{\wedge^i V}$ as 
\[  \wedge^i V  \times \wedge^{n-i} V \rightarrow \mathbb{C}
\]
determined by 
\[  \langle v_1 \wedge \ldots \wedge v_i, v_{i+1} \wedge \ldots \wedge v_n \rangle_{\wedge^i V} =\det(v_1 \wedge \ldots \wedge v_n) ,\]
where $\det$ is the determinant function for the fixed ordered basis $e_1, \ldots, e_n$. 

Define $(\wedge^iV)^{\vee}$ to be the dual space of $\wedge^iV$. For $\omega \in \wedge^{n-i}V$, define $\phi_{\omega} \in (\wedge^iV)^{\vee}$ by
\begin{align} \label{eqn map wedge v}
 \phi_{\omega}(\omega')=\langle \omega, \omega' \rangle_{\wedge^{n-i}V} 
\end{align}
By using $\det(w(v_1) \wedge \ldots \wedge w(v_n))=\sgn(w)\det(v_1 \wedge \ldots \wedge v_n)$ for any $w \in W$, we see the map $\omega \mapsto \phi_{\omega}$ from $\wedge^{n-i}V$ to $(\wedge^iV)^{\vee}$ defines a $W$-representation isomorphism from $\wedge^{n-i}V$ to $\sgn \otimes (\wedge^iV)^{\vee}$.

We also define a pairing $\langle , \rangle_{(\wedge^{n-i}V)^{\vee}}: (\wedge^{n-i}V)^{\vee} \times (\wedge^iV)^{\vee} \rightarrow \mathbb{C}$ such that
\[  \langle \phi_{\omega}, \phi_{\omega'} \rangle_{(\wedge^{n-i}V)^{\vee}}=\langle \omega, \omega' \rangle_{\wedge^iV} , 
\]
where $\omega \in \wedge^i V$ and $\omega' \in \wedge^{n-i} V$. By the definitions, we have the following two lemmas:
\begin{lemma} \label{lem equi inner wedge form}
For $\omega \in \wedge^iV$ and $\omega' \in \wedge^{n-i}V$,
\[  \langle \phi_{\omega}, \phi_{\omega'} \rangle_{_{(\wedge^{n-i}V)^{\vee}}} = \langle \omega, \omega' \rangle_{\wedge^i V}=\phi_{\omega}(\omega') .\]
\end{lemma}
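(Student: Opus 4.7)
The plan is to prove this lemma entirely by unraveling the definitions of the pairings $\langle \cdot, \cdot \rangle_{\wedge^i V}$ and $\langle \cdot, \cdot \rangle_{(\wedge^{n-i}V)^{\vee}}$, together with the dualization map $\omega \mapsto \phi_\omega$ introduced just above the lemma. The statement is essentially tautological; its role is to codify the compatibility between evaluation of $\phi_\omega$ on $\omega'$ and the transported pairing on the dual side, so that later arguments can freely move between these three viewpoints.

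For the first equality $\langle \phi_\omega, \phi_{\omega'} \rangle_{(\wedge^{n-i}V)^{\vee}} = \langle \omega, \omega' \rangle_{\wedge^i V}$, I would simply invoke the defining formula for $\langle \cdot, \cdot \rangle_{(\wedge^{n-i}V)^{\vee}}$ displayed immediately above the lemma. That formula was set up precisely to transport the pairing on $\wedge^i V \times \wedge^{n-i} V$ to the dual side using the isomorphism $\omega \mapsto \phi_\omega$, so nothing further is needed beyond citing the definition.

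For the second equality $\langle \omega, \omega' \rangle_{\wedge^i V} = \phi_\omega(\omega')$, I would invoke the definition of $\phi_\omega$ in the case $\omega \in \wedge^i V$. While equation (\ref{eqn map wedge v}) explicitly records the case $\omega \in \wedge^{n-i} V$, the analogous recipe applies to $\omega \in \wedge^i V$ and yields $\phi_\omega \in (\wedge^{n-i} V)^{\vee}$ with $\phi_\omega(\omega') = \langle \omega, \omega' \rangle_{\wedge^i V}$ for every $\omega' \in \wedge^{n-i} V$. Substituting this into the middle term of the lemma gives the second equality on the nose.

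There is no substantive obstacle here; the only care required is bookkeeping, namely tracking which subscript on $\langle \cdot, \cdot \rangle$ refers to which of the two spaces being paired, and confirming that the symmetric role played by $\wedge^i V$ and $\wedge^{n-i} V$ in the determinant pairing makes the construction $\omega \mapsto \phi_\omega$ well-defined on both sides. Once those conventions are fixed, both equalities are immediate from the respective definitions.
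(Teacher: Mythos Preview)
Your proposal is correct and matches the paper's approach: the paper states this lemma (together with the next one) as following directly ``by the definitions,'' without writing out any further argument. Your unpacking of the two equalities from the defining formulas for $\langle\cdot,\cdot\rangle_{(\wedge^{n-i}V)^\vee}$ and for $\phi_\omega$ is exactly the intended justification.
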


\begin{lemma} \label{lem property wedge form}
For any $w \in W$, 
\[\langle w. \phi_{v_1 \wedge \ldots \wedge v_i}, w. \phi_{v_{i+1} \wedge \ldots \wedge v_n} \rangle_{(\wedge^{n-i}V)^{\vee}}=\sgn(w) \langle \phi_{v_1\wedge \ldots \wedge v_i}, \phi_{v_{i+1} \wedge \ldots \wedge v_n} \rangle_{(\wedge^{n-i}V)^{\vee}}. \]
\end{lemma}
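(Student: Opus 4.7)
The plan is to reduce the claim to the fact that $W$ acts on the top exterior power $\wedge^n V$ through the sign character.

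First I would extract from the paragraph immediately preceding the lemma the transformation rule for the elements $\phi_\omega$ under $W$. Since $\omega \mapsto \phi_\omega$ is an isomorphism of $W$-modules from $\wedge^{n-i}V$ to $\sgn \otimes (\wedge^iV)^{\vee}$, unpacking the sign twist shows that, under the natural contragredient action on $(\wedge^iV)^{\vee}$, one has $w \cdot \phi_\omega = \sgn(w)\,\phi_{w\omega}$ for all $w \in W$. The analogous statement for $\omega \in \wedge^i V$ (with $\phi_\omega \in (\wedge^{n-i}V)^{\vee}$) follows by interchanging the roles of $i$ and $n-i$.

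Next I would apply this identity to both slots of the pairing on the left-hand side. The two resulting factors of $\sgn(w)$ multiply to $\sgn(w)^2 = 1$, so the left-hand side equals
\[
\langle \phi_{w(v_1)\wedge\ldots\wedge w(v_i)},\, \phi_{w(v_{i+1})\wedge\ldots\wedge w(v_n)}\rangle_{(\wedge^{n-i}V)^{\vee}}.
\]
By \leref{equi inner wedge form} this rewrites as $\langle w(v_1)\wedge\ldots\wedge w(v_i),\, w(v_{i+1})\wedge\ldots\wedge w(v_n)\rangle_{\wedge^iV}$, and the definition of the pairing on $\wedge^iV$ collapses it to $\det(w(v_1)\wedge\ldots\wedge w(v_n))$.

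Finally I would invoke the standard identity $\det(w(v_1)\wedge\ldots\wedge w(v_n)) = \sgn(w)\,\det(v_1\wedge\ldots\wedge v_n)$, valid because the determinant character of $W$ on the reflection representation $V$ is $\sgn$. Reading the chain of equalities in reverse turns $\sgn(w)\det(v_1\wedge\ldots\wedge v_n)$ into $\sgn(w)\langle \phi_{v_1\wedge\ldots\wedge v_i},\phi_{v_{i+1}\wedge\ldots\wedge v_n}\rangle_{(\wedge^{n-i}V)^{\vee}}$, which is the right-hand side. There is no real obstacle here; the only point requiring care is to interpret $w\cdot \phi_\omega$ as the natural contragredient action on $(\wedge^iV)^{\vee}$ rather than the $\sgn$-twisted action on $\sgn \otimes (\wedge^iV)^{\vee}$, which is the convention forced by the statement of the lemma itself.
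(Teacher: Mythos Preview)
Your proof is correct and follows essentially the same approach as the paper: both start from the identity $w.\phi_\omega=\sgn(w)\,\phi_{w.\omega}$ and then reduce to the determinant computation on $\wedge^n V$. The paper's proof simply records that identity and declares the rest a ``straightforward computation,'' which is precisely the chain of equalities you spell out.
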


\begin{proof}
As noted above, for any $w\in W$,
\[w. \phi_{v_1 \wedge \ldots \wedge v_i}=\sgn(w)\phi_{w.(v_1 \wedge \ldots \wedge v_i)} ,\quad w. \phi_{v_{i+1} \wedge \ldots \wedge v_n}=\sgn(w)\phi_{w.(v_{i+1} \wedge \ldots \wedge v_n)}.\] 
The lemma then follows from straightforward computations.
%\begin{align*}
%&\langle w. \phi_{v_1 \wedge \ldots \wedge v_i}, w. \phi_{v_{i+1} \wedge \ldots \wedge v_n} \rangle_{(\wedge^{n-i}V)^{\vee}} \\
%=& \langle w.(v_1\wedge \ldots \wedge v_i), w.(v_{i+1} \wedge \ldots \wedge v_n) \rangle_{(\wedge^{n-i}V)^{\vee}}  \\
%=& \langle w(v_1)\wedge \ldots \wedge w(v_i), w(v_{i+1}) \wedge \ldots \wedge w(v_n) \rangle_{(\wedge^{n-i}V)^{\vee}}  \\
%=& \sgn(w) \langle v_1\wedge \ldots \wedge v_i, v_{i+1} \wedge \ldots \wedge v_n \rangle_{(\wedge^{n-i}V)^{\vee}}  \\
%=& \sgn(w) \langle \phi_{v_1\wedge \ldots \wedge v_i}, \phi_{v_{i+1} \wedge \ldots \wedge v_n} \rangle_{(\wedge^{n-i}V)^{\vee}}
%\end{align*}
\end{proof}

\subsection{Complexes involving duals} \label{sec complex duals}
Let $X$ and $Y$ be finite dimensional $\mathbb{H}$-modules. It is well-known that there is a natural identification between the spaces $\mathrm{Hom}_{W}(\mathrm{Res}_{W}X \otimes \wedge^iV, \mathrm{Res}_WY)$ and $(X^* \otimes Y \otimes (\wedge^iV)^{\vee})^W$ (or $(X^{\bullet} \otimes Y \otimes (\wedge^iV)^{\vee})^W$). Here we consider a natural $W$-action on $X^* \otimes Y \otimes (\wedge^iV)^{\vee}$, and $(X^* \otimes Y \otimes (\wedge^iV)^{\vee})^W$ is the $W$-invariant space.  For notational simplicity, we may simply write $X$ for $\mathrm{Res}_WX$ and also sometimes regard $X$ as a vector space, which should be clear from the context.

In order to prove Theorem \ref{thm poin dua} later, we need to construct some pairings, which will be more convenient to be done for the spaces $(X^* \otimes Y \otimes (\wedge^iV)^{\vee})^W$  (or $(X^{\bullet} \otimes Y \otimes (\wedge^iV)^{\vee})^W$). The goal of this section is to translate the differential maps $d_{i}^{\vee}$ in Section \ref{sec complex ext} into the corresponding maps for $(X^* \otimes Y \otimes (\wedge^iV)^{\vee})^W$.

We define a (linear) map $\overline{D}_i:X^* \otimes Y\otimes (\wedge^iV)^{\vee} \rightarrow X^* \otimes Y\otimes (\wedge^{i+1}V)^{\vee} $ on the complex, which is determined by 
\begin{align} 
\nonumber & \overline{D}_i( f \otimes y \otimes \phi_{ v_{1} \wedge \ldots \wedge v_{n-i}}) \\
=& \sum_{j=1}^{n-i}(-1)^{j+1}(f\otimes \widetilde{v}_{j}.y \otimes \phi_{v_{1} \wedge \ldots \widehat{v}_{j} \ldots \wedge v_{n-i}}+\widetilde{v}_{j}.f\otimes y \otimes \phi_{v_{1} \wedge \ldots \widehat{v}_{j} \ldots \wedge v_{n-i}}),\label{eqn D_i def}
\end{align}
for $f \otimes y \otimes \phi_{v_{1} \wedge \ldots \wedge v_{n-i}} \in X^* \otimes Y\otimes (\wedge^iV)^{\vee} $, where $\widetilde{v}_{j}$ acts on $f$ by the action on $X^*$ and $\widetilde{v}_{j}$ acts on $y$ by the action on $Y$. 

 Note that there is a natural $W$-action on $X^* \otimes Y \otimes (\wedge^iV)^{\vee}$ (from the $W$-action of $X^*$, $Y$ and $V$). Such $W$ action commutes with $\overline{D}_i$ and so $\overline{D}_i$ sends $(X^* \otimes Y\otimes (\wedge^iV)^{\vee})^W$ to $(X^* \otimes Y\otimes (\wedge^{i+1}V)^{\vee} )^W$.  Then the map $\overline{D}_i$ gives rise to the following map:
\[ D_i: (X^* \otimes Y\otimes (\wedge^iV)^{\vee})^W \rightarrow (X^* \otimes Y\otimes (\wedge^{i+1}V)^{\vee})^W \] 
 If we want to emphasize the complexes that $\overline{D}_i$ or $D_i$ refer to, we shall write $\overline{D}_{ X^* \otimes Y \otimes (\wedge^iV)^{\vee}}$ for $\overline{D}_i$ and $D_{(X^* \otimes Y \otimes (\wedge^i V)^{\vee})^W}$ for $D_i$. In the priori, we do not have $D^2=0$, but we will soon prove it in Lemma \ref{lem D d commute}.

We define another map $\overline{D}_i^{\bullet}: X^{\bullet} \otimes Y \otimes (\wedge^iV)^{\vee} \rightarrow X^{\bullet} \otimes Y \otimes (\wedge^{i+1}V)^{\vee}$ determined by:
\begin{align*}
 & \overline{D}_i^{\bullet}( f \otimes y \otimes \phi_{ v_{1} \wedge \ldots \wedge v_{n-i}}) \\
=& \sum_{j=1}^{n-i}(-1)^{j+1}(f\otimes \widetilde{v}_{j}.y \otimes \phi_{v_{1} \wedge \ldots \wedge \widehat{v}_{j} \wedge \ldots \wedge v_{n-i}}-\widetilde{v}_{j}.f\otimes y \otimes \phi_{v_{1} \wedge \ldots \wedge \widehat{v}_{j} \wedge \ldots \wedge v_{n-i}}),
\end{align*}

Similar to $\overline{D}_i$, the restriction of $\overline{D}_i^{\bullet}$ to $(X^{\bullet} \otimes Y \otimes (\wedge^i V)^{\vee})^W$ has image in $(X^{\bullet} \otimes Y \otimes (\wedge^{i+1} V)^{\vee})^W$. Denote by $D_i^{\bullet}$ the restriction of $\overline{D}_i^{\bullet}$ to $(X^{\bullet} \otimes Y \otimes (\wedge^i V)^{\vee})^W$. 

Define a linear isomorphism $\overline{\Psi}: X^* \otimes Y \otimes (\wedge^i V)^{\vee} \rightarrow \mathrm{Hom}_{\mathbb{C}}(X \otimes \wedge^i V, Y)$ as follows, where we also regard $X$ and $Y$ as vector spaces: for $f \otimes y \otimes \phi_{v_{1} \wedge \ldots \wedge v_{{n-i}}} \in X^* \otimes Y \otimes (\wedge^iV)^{\vee}$ has the action given by: for $f \otimes y \otimes \phi_{v_{1} \wedge \ldots \wedge v_{{n-i}}} \in X^* \otimes Y \otimes (\wedge^iV)^{\vee}$,
\[    \overline{\Psi}( f \otimes y \otimes \phi_{v_{1} \wedge \ldots \wedge v_{{n-i}}})(x \otimes u_{1} \wedge \ldots \wedge u_{i})=f(x)\phi_{v_{1} \wedge \ldots \wedge v_{{n-l}}}( u_1 \wedge \ldots \wedge u_i)y \in Y
\]
for $x \otimes u_1 \wedge \ldots \wedge u_i \in X \otimes \wedge^i V$. 
The map $\overline{\Psi}$ indeed depends on $i$, but we shall suppress the index $i$. By taking restriction on the space $(X^* \otimes Y \otimes (\wedge^i V)^{\vee}))^W$, we obtain the linear isomorphism:
\[ \Psi: (X^* \otimes Y \otimes (\wedge^i V)^{\vee})^W \rightarrow \mathrm{Hom}_{W}(X \otimes \wedge^i V, Y) .\]
Since $X^*$ and $X^{\bullet}$ can be naturally identified as vector spaces, the maps $\overline{\Psi}$ and $\Psi$ are also defined for the corresponding spaces involving $\bullet$ instead of $*$. 
%Similarly define a linear isomorphism $\overline{\Psi}^{\bullet}: X^{\bullet} \otimes Y \otimes (\wedge^i V)^{\vee} \rightarrow \mathrm{Hom}_{\mathbb{C}}(X \otimes \wedge^i V, Y)$  as follows: for $f \otimes y \otimes \phi_{v_{k,1} \wedge \ldots \wedge v_{{k,n-i}}} \in X^* \otimes Y \otimes (\wedge^iV)^{\vee}$,
%\[    \overline{\Psi}^{\bullet}( f \otimes y \otimes \phi_{v_{k,1} \wedge \ldots \wedge v_{{k,n-i}}})(x \otimes u_{1} \wedge \ldots \wedge u_{i})=(-1)^if(x)\phi_{v_{1} \wedge \ldots \wedge v_{{n-l}}}( u_1 \wedge \ldots \wedge u_i)y \in Y
%\] 

Recall that $d_i^{\vee}$ and $\overline{d}_i^{\vee}$ are defined in Section \ref{sec complex ext}. %and we remark that the $*$ on $d_i^*$ has nothing to do with the $*$-involution on $\mathbb{H}$.
\begin{lemma} \label{lem D d commute}
Let $X$ and $Y$ be $\mathbb{H}$-modules. Then
\begin{enumerate}
\item[(1)] For any $\omega \in X^* \otimes Y \otimes (\wedge^iV)^{\vee}$, $\overline{\Psi}(\overline{D}_i(\omega))=(-1)^{n-i+1}\overline{d}_{i,W}^{\vee}(\overline{\Psi}(\omega)) $.
\item[(2)] For any $\omega \in X^{\bullet} \otimes Y \otimes (\wedge^iV)^{\vee}$, $\overline{\Psi}(\overline{D}_i^{\bullet}(\omega))=(-1)^{n-i+1}\overline{d}_{i,W}^{\vee}(\overline{\Psi}(\omega)) $.
\end{enumerate}

\end{lemma}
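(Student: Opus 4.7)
The plan is to verify both equalities by direct computation on pure tensors and reduce everything to a single Laplace/Cramer-type vector identity in $V$. By $\mathbb{C}$-linearity of $\overline{D}_i$, $\overline{D}_i^\bullet$, $\overline{\Psi}$, and $\overline{d}_{i,W}^\vee$, it will suffice to check each side on a generator $\omega = f \otimes y \otimes \phi_{v_1 \wedge \ldots \wedge v_{n-i}}$ evaluated at an arbitrary element $x \otimes u_1 \wedge \ldots \wedge u_{i+1}$ of $X \otimes \wedge^{i+1}V$.

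For part (1), I would unwind the definition of $\overline{\Psi} \circ \overline{D}_i$ and apply Lemma \ref{lem star form}(1), which gives $(\widetilde{v}_j.f)(x) = -f(\widetilde{v}_j.x)$, to obtain
\begin{align*}
\overline{\Psi}(\overline{D}_i(\omega))(x \otimes u_1 \wedge \ldots \wedge u_{i+1}) = \sum_{j=1}^{n-i} (-1)^{j+1} \phi_{v_1 \wedge \ldots \widehat{v}_j \ldots \wedge v_{n-i}}(u_1 \wedge \ldots \wedge u_{i+1})\bigl[f(x)\,\widetilde{v}_j.y - f(\widetilde{v}_j.x)\,y\bigr].
\end{align*}
A parallel computation from the formula for $\overline{d}_{i,W}^\vee$ in Section \ref{sec complex ext} yields
\begin{align*}
(-1)^{n-i+1}\overline{d}_{i,W}^\vee(\overline{\Psi}(\omega))(x \otimes u_1 \wedge \ldots \wedge u_{i+1}) = (-1)^{n-i+1}\sum_{k=1}^{i+1} (-1)^{k+1} \phi_{v_1 \wedge \ldots \wedge v_{n-i}}(u_1 \wedge \ldots \widehat{u}_k \ldots \wedge u_{i+1})\bigl[f(x)\,\widetilde{u}_k.y - f(\widetilde{u}_k.x)\,y\bigr].
\end{align*}

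Matching these two expressions, and using that $v \mapsto \widetilde{v}$ is linear in $v$, reduces the problem to the vector identity in $V$
\begin{align*}
\sum_{j=1}^{n-i} (-1)^{j+1} \det(v_1, \ldots, \widehat{v}_j, \ldots, v_{n-i}, u_1, \ldots, u_{i+1})\, v_j = (-1)^{n-i+1}\sum_{k=1}^{i+1} (-1)^{k+1} \det(v_1, \ldots, v_{n-i}, u_1, \ldots, \widehat{u}_k, \ldots, u_{i+1})\, u_k.
\end{align*}
This is a special case of the Laplace/Cramer relation: for any vectors $w_1, \ldots, w_{n+1} \in V$ with $\dim V = n$, one has $\sum_{l=1}^{n+1} (-1)^{l+1} \det(w_1, \ldots, \widehat{w}_l, \ldots, w_{n+1})\,w_l = 0$, which holds because each coordinate projection becomes the cofactor expansion of an $(n+1) \times (n+1)$ determinant with a repeated row. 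Applying this with $(w_1, \ldots, w_{n+1}) = (v_1, \ldots, v_{n-i}, u_1, \ldots, u_{i+1})$ and separating the $v$- and $u$-sums gives the displayed identity after a routine sign check.

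For part (2), the only substantive change is sign bookkeeping: $\overline{D}_i^\bullet$ differs from $\overline{D}_i$ by a sign on the $\widetilde{v}_j.f$ term, while the $\bullet$-analogue of Lemma \ref{lem star form}(1) gives $(\widetilde{v}_j.f)(x) = +f(\widetilde{v}_j.x)$ rather than $-f(\widetilde{v}_j.x)$. These two sign flips cancel, so the intermediate expression coincides with the one from part (1) and the same Cramer-type identity completes the argument. The only real technical point throughout is the sign tracking; once the problem is reduced to the vector-valued relation in $V$, the rest is standard linear algebra.
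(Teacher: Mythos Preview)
Your proof is correct and takes a somewhat different, more coordinate-free route than the paper. The paper fixes the basis $e_1,\dots,e_n$, takes $\omega=f\otimes y\otimes\phi_{e_{k_1}\wedge\cdots\wedge e_{k_{n-i}}}$, evaluates at $x\otimes e_{k_1'}\wedge\cdots\wedge e_{k_{i+1}'}$, and splits into cases according to the size of $\{k_1,\dots,k_{n-i}\}\cap\{k_1',\dots,k_{i+1}'\}$; only the case of a single common index survives, and there a direct sign comparison (together with Lemma~\ref{lem star form}) finishes. Your argument instead works with arbitrary vectors and recognises that, after invoking Lemma~\ref{lem star form}(1) and the linearity of $v\mapsto\widetilde v$, both sides are obtained by applying one and the same linear map $V\to Y$ to the two sides of the Cramer relation $\sum_{l=1}^{n+1}(-1)^{l+1}\det(w_1,\dots,\widehat{w}_l,\dots,w_{n+1})\,w_l=0$ with $(w_1,\dots,w_{n+1})=(v_1,\dots,v_{n-i},u_1,\dots,u_{i+1})$. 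This is exactly the identity the paper's basis computation is verifying term by term, so the two arguments are equivalent in content; your version makes the underlying linear algebra more transparent and avoids the case split, at the cost of needing to state and justify the Cramer identity. Your treatment of part~(2) is also correct: the sign change in $\overline{D}_i^\bullet$ and the sign change from the $\bullet$-version of Lemma~\ref{lem star form}(1) cancel, which is precisely the remark the paper makes at the end of its proof.
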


\begin{proof}

Recall that $e_1, \ldots, e_n$ be the fixed basis for $V$. Let $\omega=f \otimes y \otimes \phi_{e_{k_1} \wedge \ldots \wedge e_{k_{n-i}}} \in X^* \otimes Y \otimes (\wedge^iV)^{\vee}$. By linearity, it suffices to check that 
\[\overline{\Psi}(\overline{D}_i(\omega))(x \otimes e_{k_1'}\wedge\ldots \wedge e_{k_{i+1}'})=(-1)^{n-i+1}\overline{d}_{i,W}^{\vee}(\overline{\Psi}(\omega))(x \otimes e_{k_1'}\wedge\ldots \wedge e_{k_{i+1}'}) \]
for any $x \in X$ and any indices $k_1', \ldots, k_{i+1}' \in \left\{1, \ldots, n \right\}$.

 Suppose $|\left\{ k_1 ,\ldots, k_{n-i} \right\} \cap \left\{ k_1', \ldots, k_{i+1}' \right\}|  \geq 2$. By Lemma \ref{lem equi inner wedge form},
\begin{align*}
& \overline{d}_{i}^{\vee}\overline{\Psi}(\omega)(x\otimes e_{k_1'} \wedge \ldots \wedge e_{k_{i+1}'}) =0= \overline{\Psi}(\overline{D}_i\omega)(x\otimes e_{k_1'} \wedge \ldots \wedge e_{k_{i+1}'})
%=& \Psi(\omega'_{k,j})( x \otimes u_{1} \wedge \ldots \wedge u_{i+1}) -\Psi(\omega''_{k,j})( x \otimes u_{1} \wedge \ldots \wedge u_{i+1})
\end{align*}

Suppose $|\left\{ k_1 ,\ldots, k_{n-i} \right\} \cap \left\{ k_1', \ldots, k_{i+1}' \right\}| =1$. Let $k_p$ and $k_q'$ be the unique pair of indices such that $e_{k_p}=e_{k'_q}$. 
Then 
\begin{align*}
& \overline{d}_{i,W}^{\vee}\overline{\Psi}(\omega)(x\otimes e_{k'_1} \wedge \ldots \wedge e_{k'_{i+1}}) \\
=& \overline{\Psi}(\omega)(d_{i,W}(x\otimes e_{k'_1} \wedge \ldots \wedge e_{k'_{i+1}})) \\
%=& \sum_{s=1, \ldots,i+1,\ s\neq q}(\widetilde{u}_s.\Psi(\omega_k)( x \otimes u_{1} \wedge \ldots \wedge \widehat{u}_s \wedge \ldots \wedge u_{i+1})-\Psi(\omega_k)( \widetilde{u}_s.x \otimes u_{1} \wedge \ldots \wedge \widehat{u}_s \wedge \ldots \wedge u_{i+1}) )\\
%=& (-1)^{q+1}\widetilde{e}_{k_q'}.\Psi(\omega)( x \otimes e_{k_1'} \wedge \ldots \wedge \widehat{e}_{k_q'} \wedge \ldots \wedge e_{k'_{i+1}}) \\
% & \quad -(-1)^{q+1}\Psi(\omega)(\widetilde{e}_{k_q'}. x \otimes e_{k_1'} \wedge \ldots \wedge \widehat{e}_{k_q'} \wedge \ldots \wedge e_{k'_{i+1}}) \\
=& (-1)^{q+1}f(x)\phi_{e_{k_1} \wedge \ldots \wedge e_{k_{n-i}}}( e_{k_1'} \wedge \ldots \wedge \widehat{e}_{k_q'} \wedge \ldots \wedge e_{k'_{i+1}})\widetilde{e}_{k_q'}.y \\
 & \quad -  (-1)^{q+1}f(\widetilde{e}_{k_q'}.x)\phi_{e_{k_1} \wedge \ldots \wedge e_{k_{n-i}}}( e_{k_1'} \wedge \ldots \wedge \widehat{e}_{k_q'} \wedge \ldots \wedge e_{k'_{i+1}})y  \quad  \\
=& (-1)^{n-i-p} f(x)\phi_{e_{k_1} \wedge \ldots \wedge \widehat{e}_{k_p} \wedge \ldots \wedge e_{k_{n-i}}}(e_{k_1'} \wedge \ldots   \wedge e_{k_{ i+1}'})\widetilde{e}_{k_q'}.y  \\
 & \quad -  (-1)^{n-i-p} f(\widetilde{e}_{k_q'}.x)\phi_{e_{k_1} \wedge \ldots \wedge \widehat{e}_{k_p} \wedge \ldots \wedge e_{k_{n-i}}}(e_{k_1'} \wedge \ldots   \wedge e_{k_{ i+1}'})y \quad \mbox{ (by Lemma \ref{lem equi inner wedge form})} \\
=& (-1)^{n-i-p} f(x)\phi_{e_{k_1} \wedge \ldots \wedge \widehat{e}_{k_p} \wedge \ldots \wedge e_{k_{n-i}}}(e_{k_1'} \wedge \ldots   \wedge e_{k_{ i+1}'})\widetilde{e}_{k_p}.y  \\
 & \quad -  (-1)^{n-i-p} f(\widetilde{e}_{k_p}.x)\phi_{e_{k_1} \wedge \ldots \wedge \widehat{e}_{k_p} \wedge \ldots \wedge e_{k_{n-i}}}(e_{k_1'} \wedge \ldots   \wedge e_{k_{ i+1}'})y \quad \mbox{ (by $e_{k_p}=e_{k_q'}$)}  \\
=& (-1)^{n-i-p} f(x)\phi_{e_{k_1} \wedge \ldots \wedge \widehat{e}_{k_p} \wedge \ldots \wedge e_{k_{n-i}}}(e_{k_1'} \wedge \ldots   \wedge e_{k_{ i+1}'})\widetilde{e}_{k_p}.y  \\
 & \quad + (-1)^{n-i-p-1} (\widetilde{e}_{k_p}.f)(x)\phi_{e_{k_1} \wedge \ldots \wedge \widehat{e}_{k_p} \wedge \ldots \wedge e_{k_{n-i}}}(e_{k_1'} \wedge \ldots   \wedge e_{k_{ i+1}'})y     \mbox{ (by Lemma \ref{lem star form})}\\
=& (-1)^{n-i+1}\overline{\Psi}(D_i( f \otimes y \otimes \phi_{e_{k_1} \wedge \ldots \wedge e_{k_{n-i}}}))(x\otimes e_{k'_1} \wedge \ldots \wedge e_{k'_{i+1}}) \quad \\
=& (-1)^{n-i+1} \overline{\Psi}(\overline{D}_i(\omega))(x\otimes e_{k'_1} \wedge \ldots \wedge e_{k'_{i+1}}) 
\end{align*}
This completes the proof for (1).

The proof for (2) follows the same style of computations. (One of the difference is in the fifth equality of the computation in the second case and that explains why the definition of $\overline{D}^{\bullet}_i$ and $\overline{D}^*_i$ differs by a sign in a term.) 
\end{proof}

\begin{lemma} \label{lem complexes isomorphic}
We have the following:
\begin{enumerate}
\item[(1)] $D^2=0$ and $(D^{\bullet})^2=0$,
\item[(2)] The complex $\mathrm{Hom}_{W}(\mathrm{Res}_WX \otimes \wedge^iV, Y)$ with differentials $d_i^*$ is naturally isomorphic to the complex $(X^* \otimes Y \otimes (\wedge^i V)^{\vee})^W$ with differentials $D_i$. 
\item[(3)] The complex $\mathrm{Hom}_{W}(\mathrm{Res}_WX \otimes \wedge^iV, Y)$ with differentials $d_i^*$ is naturally isomorphic to the complex $(X^{\bullet} \otimes Y \otimes (\wedge^i V)^{\vee})^W$ with differentials $D_i^{\bullet}$.  
\end{enumerate}

\end{lemma}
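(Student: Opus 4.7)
The plan is to deduce all three parts from Lemma \ref{lem D d commute} together with the identity $d_{i,W}^{\vee} \circ d_{i-1,W}^{\vee} = 0$, which comes for free from the fact that $(\ref{eqn projective resolution})$ is a projective resolution (Theorem \ref{thm projective resol}). So almost no new computation is needed; what remains is (i) upgrading $\Psi$ to an honest isomorphism of complexes by absorbing the sign $(-1)^{n-i+1}$, and (ii) reading off $D^2 = 0$ from the transport of $(d^{\vee})^2 = 0$.

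First I would verify that $\overline{\Psi}: X^* \otimes Y \otimes (\wedge^i V)^{\vee} \to \mathrm{Hom}_{\mathbb{C}}(X \otimes \wedge^i V, Y)$ is a linear isomorphism. This is the usual tensor–hom adjunction for finite dimensional spaces, and it is $W$-equivariant for the diagonal $W$-actions, so it restricts to a linear isomorphism
\[
  \Psi: (X^* \otimes Y \otimes (\wedge^i V)^{\vee})^W \;\stackrel{\sim}{\longrightarrow}\; \mathrm{Hom}_{W}(\mathrm{Res}_W X \otimes \wedge^i V,\, \mathrm{Res}_W Y).
\]
The same argument using that $X^*$ and $X^{\bullet}$ coincide as $W$-modules gives the analogous isomorphism in the $\bullet$-setting.

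Next, Lemma \ref{lem D d commute}(1) gives $\Psi \circ D_i = (-1)^{n-i+1}\, d_{i,W}^{\vee} \circ \Psi$ on $W$-invariants. To turn this into an isomorphism of complexes I rescale: set $\widetilde{\Psi}_i := \varepsilon_i \Psi$ with signs $\varepsilon_i \in \{\pm 1\}$ chosen so that $\varepsilon_{i+1}/\varepsilon_i = (-1)^{n-i+1}$ (for instance $\varepsilon_i = (-1)^{\binom{i}{2} - in}$, with $\varepsilon_0 = 1$). Then
\[
  \widetilde{\Psi}_{i+1} \circ D_i = d_{i,W}^{\vee} \circ \widetilde{\Psi}_i,
\]
so $\widetilde{\Psi}_\bullet$ is an isomorphism of cochain complexes between $((X^* \otimes Y \otimes (\wedge^i V)^{\vee})^W,\, D_i)$ and $(\mathrm{Hom}_W(\mathrm{Res}_W X \otimes \wedge^i V, \mathrm{Res}_W Y),\, d_{i,W}^{\vee})$. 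This proves (2). Using Lemma \ref{lem D d commute}(2) in place of (1) gives (3) verbatim.

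Finally, (1) is automatic: the maps $d_{i,W}^{\vee}$ come from applying $\mathrm{Hom}_{\mathbb{H}}(-,Y)$ to the complex $(\ref{eqn projective resolution})$ and then using Frobenius reciprocity, so $(d_{i,W}^{\vee})^2 = 0$. Since $\Psi$ (hence $\widetilde{\Psi}$) is a vector space isomorphism intertwining $D_i$ with $d_{i,W}^{\vee}$ up to a nonzero scalar at each degree, it follows that $D_{i+1} \circ D_i = 0$, and the same argument with $D_i^{\bullet}$ yields $(D^{\bullet})^2 = 0$. The only potential obstacle is purely cosmetic — keeping the signs straight when restricting $\overline{D}_i$ and $\overline{D}_i^{\bullet}$ to the $W$-invariants and choosing the $\varepsilon_i$ consistently — but once the rescaling is fixed, there is nothing further to check.
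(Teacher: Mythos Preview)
Your proposal is correct and follows essentially the same route as the paper: both deduce everything from Lemma~\ref{lem D d commute} by transporting the known identity $(d_{W}^{\vee})^2=0$ through the isomorphism $\Psi$. The paper's proof is terser and simply writes $D_i=\Psi^{-1}\circ d_i^{\vee}\circ\Psi$ without discussing the sign $(-1)^{n-i+1}$, whereas you explicitly absorb it via the rescaling $\widetilde{\Psi}_i=\varepsilon_i\Psi$; this extra care is harmless and arguably cleaner.
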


\begin{proof}
By Lemma \ref{lem D d commute}(1) and the fact that $\Psi$ is an isomorphism, $D_i=\Psi^{-1} \circ d_i^* \circ \Psi $. Then (1) follows from Lemma \ref{lem well defined d}. (2) follows from Lemma \ref{lem D d commute} (1). The proof for (3) and another assertion about $D^{\bullet}$ in (1)  is similar.
\end{proof}

\begin{proposition} \label{prop equiv ext group}
Let $X, Y$ be finite dimensional $\mathbb{H}$-modules. We have:
\begin{enumerate}
\item $\mathrm{Ext}^i_{\mathbb{H}}(X, Y^*) \cong\mathrm{Ext}^i_{\mathbb{H}}(Y, X^*)$;
\item $\mathrm{Ext}^i_{\mathbb{H}}(X, Y^{\bullet}) \cong \mathrm{Ext}^i_{\mathbb{H}}(Y, X^{\bullet})$;
\item $\mathrm{Ext}^i_{\mathbb{H}}(X, \theta(Y)) \cong \mathrm{Ext}^i_{\mathbb{H}}(\theta(X), Y)$.
\end{enumerate}
Those linear isomorphisms between them are natural.

\end{proposition}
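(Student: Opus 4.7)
The plan is to realize each isomorphism at the level of the explicit cochain complexes developed in Section \ref{sec complex duals}. By Proposition \ref{prop complex ext gp} combined with Lemma \ref{lem complexes isomorphic}(2), $\mathrm{Ext}^i_{\mathbb{H}}(X,Y)$ is the $i$-th cohomology of $\bigl((X^*\otimes Y\otimes (\wedge^i V)^{\vee})^W, D_i\bigr)$. Substituting $Y$ by $Y^*$ and using $(Y^*)^*\cong Y$ (which relies on the finite-dimensional hypothesis) would identify $\mathrm{Ext}^i_{\mathbb{H}}(X, Y^*)$ with the cohomology of $(X^*\otimes Y^*\otimes (\wedge^i V)^{\vee})^W$, and likewise $\mathrm{Ext}^i_{\mathbb{H}}(Y, X^*)$ with that of $(Y^*\otimes X^*\otimes (\wedge^i V)^{\vee})^W$. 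The analogous device using Lemma \ref{lem complexes isomorphic}(3) would handle the $\bullet$-version needed for part~(2).

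For part (1), I would introduce the swap $\sigma: f\otimes g\otimes \phi\mapsto g\otimes f\otimes \phi$, which is $W$-equivariant and therefore restricts to a linear isomorphism on the $W$-invariant subspaces. A glance at formula (\ref{eqn D_i def}) shows that in $D_i$ the two module factors $X^*$ and $Y^*$ enter the differential symmetrically with the common sign $(-1)^{j+1}$, so $\sigma$ should intertwine the differentials on the nose. It would therefore be a chain isomorphism and induce the claimed natural isomorphism on cohomology.

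Part (2) is the same in spirit, but the defining formula for $D_i^{\bullet}$ carries opposite signs on its two slots. A direct check will give $\sigma D_i^{\bullet} = -D_i^{\bullet}\sigma$ rather than strict commutation. I would remedy this with the twisted swap $\tau_i = (-1)^i \sigma$, which is a genuine chain isomorphism and induces the desired natural isomorphism on cohomology. Controlling this sign is the main, and essentially only, technical point in parts (1) and~(2); once it is in hand, naturality in $X$ and $Y$ is automatic since $\sigma$ is functorial in both arguments.

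Part (3) is more formal and does not require the explicit complex. Since $\theta$ is an involutive algebra automorphism, a $\mathbb{C}$-linear map $f:X\to Y$ satisfies $f(hx)=\theta(h)f(x)$ if and only if it is $\mathbb{H}$-linear as a map $X\to\theta(Y)$, if and only if (using $\theta^{2}=\Id$) it is $\mathbb{H}$-linear as a map $\theta(X)\to Y$. This would yield a natural isomorphism of bifunctors $\mathrm{Hom}_{\mathbb{H}}(-,\theta(-))\cong \mathrm{Hom}_{\mathbb{H}}(\theta(-),-)$. Since $\theta$ carries projective resolutions to projective resolutions --- concretely, applying $\theta$ to the Koszul-type resolution of Section \ref{sec kos resol} and using Lemma \ref{lem theta v tilde} shows that the $\theta$-twisted Koszul complex of $X$ is precisely the Koszul complex of $\theta(X)$ --- this promotes to an isomorphism on all $\mathrm{Ext}^i$.
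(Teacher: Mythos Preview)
Your proposal is correct and follows essentially the same approach as the paper: parts (1) and (2) are proved by swapping the two tensor factors in the explicit complexes of Section~\ref{sec complex duals}, and part (3) by the formal observation that $\theta$ carries projective resolutions to projective resolutions together with the natural identification $\Hom_{\mathbb{H}}(\theta(P),Y)\cong \Hom_{\mathbb{H}}(P,\theta(Y))$. Your treatment of (2) is in fact more explicit than the paper's, which simply says ``the proof is similar'': you correctly observe that the swap $\sigma$ anticommutes with $D_i^{\bullet}$ and repair this with the degree-dependent sign $(-1)^i$, a detail the paper leaves implicit.
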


\begin{proof}
We first prove that $\mathrm{Ext}^i_{\mathbb{H}}(X, Y^*)=\mathrm{Ext}^i_{\mathbb{H}}(Y, X^*)$. By Lemma \ref{lem complexes isomorphic} (2),
\[\mathrm{Ext}^i_{\mathbb{H}}(X, Y^*) \cong \ker D_{(X^* \otimes Y^* \otimes (\wedge^{i}V)^{\vee})^W}/\im D_{ (X^* \otimes Y^* \otimes (\wedge^{i-1}V)^{\vee})^W}, \]
and
\[\mathrm{Ext}^i_{\mathbb{H}}(Y, X^*) \cong \ker D_{ (Y^* \otimes X^* \otimes (\wedge^{i}V)^{\vee})^W}/\im D_{(Y^* \otimes X^* \otimes (\wedge^{i-1}V)^{\vee})^W}. \]
There is a natural isomorphism between the spaces $(X^* \otimes Y^* \otimes (\wedge^{n-i}V)^{\vee})^W$ and $ (Y^* \otimes X^* \otimes (\wedge^{n-i}V)^{\vee})^W$. It is straightforward to verify the isomorphism induces an isomorphism between the corresponding complexes by using (\ref{eqn D_i def}). The proof for (2) is similar. 

For (3), let $P^{i} \rightarrow X$ be a projective resolution of $X$. Then $\theta(P^i)$ is still a projective object and $\theta(P^i) \rightarrow \theta(X)$ is a projective resolution of $\theta(X)$. There is a natural isomorphism $\mathrm{Hom}_{\mathbb{H}}(\theta(P^i), Y) \cong \mathrm{Hom}_{\mathbb{H}}(P^i, \theta(Y))$. Hence $\mathrm{Ext}^i_{\mathbb{H}}(\theta(X), Y)=\mathrm{Ext}^i_{\mathbb{H}}(X, \theta(Y))$.
\end{proof}

%\begin{proposition}
%The $*$ and $\bullet$ are exact functors. 

%\end{proposition}

\subsection{Iwahori-Matsumoto dual} \label{ss IM invo}

\begin{definition} \label{def IM daul}
The Iwahori-Matsumoto involution $\iota$ is an automorphism on $\mathbb{H}$ determined by 
\[ \iota(v)=-v \quad \mbox{ for $v \in V$}, \iota(w)=\sgn(w) w  \quad \mbox{ for $w \in W$ }.
\]
This defines a map, still denoted $\iota$, from the set of $\mathbb{H}$-modules to the set of $\mathbb{H}$-modules.
\end{definition}
\begin{lemma} \label{lem iota v}
For any $v \in V$, $\iota(\widetilde{v})=-\widetilde{v}$. 
\end{lemma}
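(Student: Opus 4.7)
The proof is a direct computation from the definitions. First I would recall the formula
\[ \widetilde{v} = v - \frac{1}{2}\sum_{\alpha \in R^+} k_\alpha \langle v, \alpha^\vee \rangle t_{s_\alpha} \]
from (\ref{eqn v titlde}), and apply the involution $\iota$ term by term. Since $\iota$ is an algebra automorphism, it is enough to know how it acts on the two summands. On the $V$-part, $\iota(v) = -v$ by Definition \ref{def IM daul}. On the group algebra part, each $s_\alpha$ (being a reflection) has $\sgn(s_\alpha) = -1$, so $\iota(t_{s_\alpha}) = -t_{s_\alpha}$.

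Substituting gives
\[ \iota(\widetilde{v}) = -v - \frac{1}{2}\sum_{\alpha \in R^+} k_\alpha \langle v, \alpha^\vee \rangle(-t_{s_\alpha}) = -v + \frac{1}{2}\sum_{\alpha \in R^+} k_\alpha \langle v, \alpha^\vee \rangle t_{s_\alpha} = -\widetilde{v}, \]
which is the desired identity. There is no genuine obstacle here; the statement is essentially a one-line consequence of the definition of $\widetilde{v}$ together with the fact that $\iota$ acts by $-1$ on both $V$ and on $t_{s_\alpha}$ for every reflection $s_\alpha$, so the two scalars cancel uniformly and the linear combination reverses sign as a whole.
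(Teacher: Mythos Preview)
Your proof is correct and follows the same approach as the paper's own proof, which simply states that the identity follows from $\iota(t_{s_\alpha})=-t_{s_\alpha}$ and the definitions. You have merely written out explicitly the one-line computation the paper leaves implicit.
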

\begin{proof}
This follows from $\iota(s_{\alpha})=-s_{\alpha}$ and definitions.
\end{proof}

\begin{lemma} \label{lem property bullet form}
Let $Y$ be an $\mathbb{H}$-module. Let $\kappa$ be the natural (vector space) bijection from $Y$ to $\iota(Y)$ so that $h.\kappa(y)=\kappa(\iota(h).y)$. Define a bilinear pairing $\langle , \rangle_Y^{\bullet}:   Y  \times \iota(Y)^{\bullet} \rightarrow \mathbb{C}$ by $\langle y, g \rangle_Y^{\bullet}=g(\kappa(y))$. Then
\begin{enumerate}
\item[(1)] for $v \in V$, $\langle \widetilde{v}.y, g \rangle^{\bullet}_Y = \langle y, -\widetilde{v}.g \rangle^{\bullet}_Y$,  
\item[(2)] for $w \in W$, $\langle t_w.y, g \rangle^{\bullet}_Y= \sgn(w)\langle y, t_w^{-1}.g \rangle_Y^{\bullet}$,
\item[(3)] $\langle , \rangle^{\bullet}_Y$ is non-degenerate.
\end{enumerate}
\end{lemma}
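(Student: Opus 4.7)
My plan is to prove each of the three assertions directly from the definitions, using Lemma \ref{lem iota v} to control how $\iota$ interacts with $\widetilde{v}$ and the definition of $\iota$ on the group algebra. The only conceptual input is the compatibility $h.\kappa(y) = \kappa(\iota(h).y)$, which lets me transport actions across $\kappa$, and the fact that $v^{\bullet}=v$, $t_w^{\bullet}=t_w^{-1}$ (so $\widetilde{v}^{\bullet}=\widetilde{v}$, as noted in the proof of Lemma \ref{lem star form}).

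For part (1), I would unravel $\langle \widetilde{v}.y, g\rangle_Y^{\bullet} = g(\kappa(\widetilde{v}.y))$. Since $\widetilde{v} = \iota(\iota(\widetilde{v})) = \iota(-\widetilde{v})$ by Lemma \ref{lem iota v}, the defining property of $\kappa$ gives $\kappa(\widetilde{v}.y) = -\widetilde{v}.\kappa(y)$. On the $\bullet$-dual side, the action is $(\widetilde{v}.g)(z) = g(\widetilde{v}^{\bullet}.z) = g(\widetilde{v}.z)$, so
\[
g(-\widetilde{v}.\kappa(y)) = -(\widetilde{v}.g)(\kappa(y)) = \langle y, -\widetilde{v}.g\rangle_Y^{\bullet},
\]
which is what we want. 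Part (2) follows the same pattern: $\iota(t_w) = \sgn(w) t_w$ gives $\kappa(t_w.y) = \sgn(w)\,t_w.\kappa(y)$, and the $\bullet$-action $(t_w^{-1}.g)(z) = g((t_w^{-1})^{\bullet}.z) = g(t_w.z)$ then yields the required identity after pulling $\sgn(w)$ outside.

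For part (3), non-degeneracy reduces to a purely linear-algebra statement because $\kappa:Y\to\iota(Y)$ is a linear bijection of the underlying vector spaces and $\iota(Y)^{\bullet}$ has the space of linear functionals on $\iota(Y)$ as its underlying space. Thus $\langle y,g\rangle_Y^{\bullet} = g(\kappa(y))$ is the evaluation pairing between $Y$ (via $\kappa$) and its space of linear functionals, which is non-degenerate: if $y\neq 0$ then $\kappa(y)\neq 0$ and so some linear functional $g$ takes a nonzero value at $\kappa(y)$; if $g\neq 0$ then $g$ is nonzero at some element of $\iota(Y)$, which is of the form $\kappa(y)$.

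No real obstacle is expected; the entire lemma is a bookkeeping exercise transporting the $\iota$-twist through $\kappa$ and invoking the already-recorded formulas $\iota(\widetilde{v})=-\widetilde{v}$, $\iota(t_w)=\sgn(w)t_w$, $\widetilde{v}^{\bullet}=\widetilde{v}$, and $t_w^{\bullet}=t_w^{-1}$. The only point needing a tiny bit of care is keeping track of the sign in (1), which comes from combining $\iota(\widetilde{v})=-\widetilde{v}$ on the $Y$-side with $\widetilde{v}^{\bullet}=+\widetilde{v}$ on the $\iota(Y)^{\bullet}$-side, producing the asymmetric minus sign on the right-hand side.
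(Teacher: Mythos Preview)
Your proof is correct and follows essentially the same approach as the paper's own proof, which simply records that $\widetilde{v}^{\bullet}=\widetilde{v}$ and then says (1) follows from Lemma~\ref{lem iota v} and the definitions, while (2) and (3) follow from the definitions. You have merely spelled out those computations in full.
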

\begin{proof}
By a direct computation, $\widetilde{v}^{\bullet}=\widetilde{v}$ and then (1) follows from Lemma \ref{lem iota v} and the definitions. (2) and (3) follow from the definitions. 
\end{proof}

\begin{proposition} \label{prop ext im invo}
For $\mathbb{H}$-modules $X$ and $Y$, $\mathrm{Ext}^i_{\mathbb{H}}(X, \iota(Y))\cong \mathrm{Ext}^i_{\mathbb{H}}(\iota(X), Y)$.
\end{proposition}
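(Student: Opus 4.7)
The plan is to mimic the argument used in Proposition \ref{prop equiv ext group}(3) for the automorphism $\theta$, since $\iota$, like $\theta$, is an algebra automorphism of $\mathbb{H}$ (as opposed to the anti-involutions $*$ and $\bullet$, which require the complex-level identification developed in Section \ref{sec complex duals} together with Lemma \ref{lem property bullet form}). In particular, I expect no real obstacle here; the content is formal and relies only on the fact that $\iota$ is an involutive algebra automorphism.

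First, I would observe that since $\iota$ is an exact auto-equivalence of the category of $\mathbb{H}$-modules, twisting by $\iota$ preserves projectivity: if $P$ is projective, then the exactness of $\mathrm{Hom}_{\mathbb{H}}(\iota(P),-)$ follows from the exactness of $\mathrm{Hom}_{\mathbb{H}}(P,\iota(-))$, which in turn follows from the exactness of $\iota$ together with projectivity of $P$. Next, I would record the natural identification $\mathrm{Hom}_{\mathbb{H}}(M,N) \cong \mathrm{Hom}_{\mathbb{H}}(\iota(M),\iota(N))$ at the level of the underlying $\mathbb{C}$-linear map: a map $\phi$ is $\mathbb{H}$-equivariant for the original actions if and only if it is $\mathbb{H}$-equivariant for the twisted actions, as seen by unwinding $h \cdot_{\iota} m = \iota(h) \cdot m$ and using that $\iota$ is an algebra homomorphism.

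Finally, given any projective resolution $P^{\bullet} \to X$, applying $\iota$ produces a projective resolution $\iota(P^{\bullet}) \to \iota(X)$ of $\iota(X)$. Using $\iota^2 = \mathrm{Id}$ together with the natural identification above, I then compute
\[
\mathrm{Ext}^i_{\mathbb{H}}(X,\iota(Y)) \;=\; H^i\bigl(\mathrm{Hom}_{\mathbb{H}}(P^{\bullet},\iota(Y))\bigr) \;\cong\; H^i\bigl(\mathrm{Hom}_{\mathbb{H}}(\iota(P^{\bullet}),\iota^2(Y))\bigr) \;=\; \mathrm{Ext}^i_{\mathbb{H}}(\iota(X),Y),
\]
which gives the desired isomorphism.
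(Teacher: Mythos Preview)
Your proposal is correct and follows essentially the same approach as the paper: take a projective resolution $P^{\bullet}\to X$, observe that $\iota$ preserves projectivity and exactness so that $\iota(P^{\bullet})\to\iota(X)$ is a projective resolution, and use the natural identification $\mathrm{Hom}_{\mathbb{H}}(P^i,\iota(Y))\cong\mathrm{Hom}_{\mathbb{H}}(\iota(P^i),Y)$ (via $\iota^2=\Id$) to conclude. The paper's proof is slightly terser but the argument is the same.
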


\begin{proof}
 Let $P^{i} \rightarrow X$ be a projective resolution of $X$. Then $\iota(P^i)$ is still a projective object and $\iota(P^i) \rightarrow \iota(X)$ is a projective resolution of $\iota(X)$. There is a natural isomorphism $\mathrm{Hom}_{\mathbb{H}}(\iota(P^i), Y) \cong \mathrm{Hom}_{\mathbb{H}}(P^i, \iota(Y))$. Hence $\mathrm{Ext}^i_{\mathbb{H}}(\iota(X), Y)\cong \mathrm{Ext}^i_{\mathbb{H}}(X, \iota(Y))$.
\end{proof}

\subsection{Duality theorem}

In this section, we state and prove a duality on $\mathrm{Ext}_{\mathbb{H}}$-groups.

\begin{theorem} \label{thm poin dua} Let $\mathbb{H}$ be the graded affine Hecke algebra associated to a root datum $( R, V, R^{\vee}, V^{\vee}, \Pi)$ and a parameter function $k: \Pi \rightarrow \mathbb{C}$ (Definition \ref{def graded affine}). Let $n=\dim V$. Let $X$ and $Y$ be finite dimensional $\mathbb{H}$-modules. Let $X^*$ be the $*$-dual of $X$ in Definition \ref{def star bullet duals}. Let $\iota(Y)$ be the Iwahori-Matsumoto dual in Definition \ref{def IM daul} and let $\iota(Y)^{\bullet}$ be the $\bullet$-dual of $\iota(Y)$ in Definition \ref{def star bullet duals}. Then there exists a natural non-degenerate pairing 
\[  \mathrm{Ext}^i_{\mathbb{H}}(X, Y) \times \mathrm{Ext}_{\mathbb{H}}^{n-i}(X^*, \iota(Y)^{\bullet})  \rightarrow \mathbb{C}.
\]
\end{theorem}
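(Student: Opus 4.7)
The plan is to compute both $\mathrm{Ext}$-groups via the Koszul-type complexes of Section \ref{sec complex ext}, construct a perfect bilinear pairing at the chain level, verify that the differentials are adjoint up to sign, and finally deduce that the induced pairing on cohomology is non-degenerate. Naturality will be evident from the construction.

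First I would identify the two cohomologies. By Lemma \ref{lem complexes isomorphic}(2), $\mathrm{Ext}^i_{\mathbb{H}}(X,Y)$ is the $i$-th cohomology of the complex $C^{\bullet}$ with $C^i = (X^* \otimes Y \otimes (\wedge^i V)^{\vee})^W$ and differential $D_i$ of Section \ref{sec complex duals}. Applying the same lemma with $(X,Y)$ replaced by $(X^*, \iota(Y)^{\bullet})$, and using the canonical isomorphism $(X^*)^* \cong X$ for finite-dimensional $X$, $\mathrm{Ext}^{n-i}_{\mathbb{H}}(X^*, \iota(Y)^{\bullet})$ is the $(n-i)$-th cohomology of the complex ${C'}^{\bullet}$ with ${C'}^{j} = (X \otimes \iota(Y)^{\bullet} \otimes (\wedge^{j} V)^{\vee})^W$ and differential $D'_{j}$. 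I would then define a chain-level pairing $\Phi_i : C^i \times {C'}^{n-i} \to \mathbb{C}$ by
\[
\Phi_i(f \otimes y \otimes \phi^{(1)},\; x \otimes g \otimes \phi^{(2)}) \;=\; \langle f, x \rangle^{*}_X \cdot \langle y, g \rangle^{\bullet}_Y \cdot \langle \phi^{(2)}, \phi^{(1)} \rangle_{(\wedge^{n-i}V)^{\vee}},
\]
assembled from the three non-degenerate pairings of Lemmas \ref{lem star form}, \ref{lem property bullet form}, and Section \ref{ss pairing wedge V}. By Lemmas \ref{lem star form}(2), \ref{lem property bullet form}(2) and \ref{lem property wedge form}, the ambient pairing on $(X^* \otimes Y \otimes (\wedge^i V)^{\vee}) \times (X \otimes \iota(Y)^{\bullet} \otimes (\wedge^{n-i}V)^{\vee})$ is $W$-invariant: the only non-trivial $W$-contributions are a $\sgn(w)$ from $\langle , \rangle^{\bullet}_Y$ and a $\sgn(w)$ from the wedge pairing, whose product is $1$. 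Hence $\Phi_i$ descends to the $W$-invariants, and because $\mathbb{C}[W]$ is semisimple the restriction remains non-degenerate on the chain level.

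The heart of the proof is the adjointness
\[
\Phi_{i+1}(D_i \omega_1,\; \omega_2) \;=\; (-1)^{i+1}\, \Phi_i(\omega_1,\; D'_{n-i-1}\omega_2), \qquad \omega_1 \in C^i,\; \omega_2 \in {C'}^{n-i-1}.
\]
Starting from the explicit formula for $\overline{D}_i$ and using Lemma \ref{lem star form}(1) and Lemma \ref{lem property bullet form}(1), each occurrence of $\widetilde{v}_j$ is transferred across the corresponding pairing at the cost of a sign $-1$. The remaining combinatorial identity is controlled by the Koszul rule for the interior product $\iota_F$ ($F \in V^{*}$) on the exterior algebra: since $(\wedge^{i+1}V) \wedge (\wedge^{n-i}V) \subseteq \wedge^{n+1}V = 0$, the graded Leibniz rule gives
\[
\iota_F(\alpha) \wedge \beta \;=\; (-1)^i\, \alpha \wedge \iota_F(\beta), \qquad \alpha \in \wedge^{i+1}V,\; \beta \in \wedge^{n-i}V.
\]
Pairing with $\det$ converts the $\sum_j (-1)^{j+1}$ contractions against the $v_j$'s arising from $D_i$ into the $\sum_k (-1)^{k+1}$ contractions against the $u_k$'s arising from $D'_{n-i-1}$, producing the overall sign $(-1)^{i+1}$. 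Applying this separately to the $\widetilde{v}_j.y$ and $\widetilde{v}_j.f$ terms yields the claimed adjointness. The careful sign bookkeeping here is the main obstacle.

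Once adjointness is established, $\Phi_i$ kills coboundary--cocycle and cocycle--coboundary pairs, so it descends to a bilinear pairing $\mathrm{Ext}^i_{\mathbb{H}}(X,Y) \times \mathrm{Ext}^{n-i}_{\mathbb{H}}(X^*, \iota(Y)^{\bullet}) \to \mathbb{C}$. Non-degeneracy follows from a standard finite-dimensional argument: if $[\omega_1]$ pairs trivially with every class in $\mathrm{Ext}^{n-i}_{\mathbb{H}}(X^*, \iota(Y)^{\bullet})$, then the functional $\omega \mapsto \Phi_i(\omega_1, \omega)$ on ${C'}^{n-i}$ vanishes on $\ker D'_{n-i}$ and hence factors as $\psi \circ D'_{n-i}$ for some functional $\psi$ on ${C'}^{n-i+1}$; chain-level perfectness of $\Phi_{i-1}$ realises $\psi = \Phi_{i-1}(\widetilde{\omega}_1, \cdot)$ for some $\widetilde{\omega}_1 \in C^{i-1}$, and adjointness together with chain-level perfectness of $\Phi_i$ then forces $\omega_1 = (-1)^i D_{i-1}\widetilde{\omega}_1$, so $[\omega_1] = 0$. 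A symmetric argument handles the second variable, completing the proof.
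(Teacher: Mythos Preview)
Your proposal is correct and follows essentially the same route as the paper: build the chain-level pairing from $\langle,\rangle_X^*$, $\langle,\rangle_Y^{\bullet}$, and the wedge pairing, check $W$-invariance and non-degeneracy on invariants, verify that $D$ and $D'$ are adjoint up to sign, and descend to cohomology. The only cosmetic differences are that you phrase the wedge adjointness via the graded Leibniz rule rather than the paper's basis-by-basis computation (your global sign $(-1)^{i+1}$ differs from the paper's $(-1)^{n-i}$, but any nonzero sign suffices), and you spell out the ``standard linear algebra argument'' for non-degeneracy on cohomology that the paper leaves implicit.
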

\begin{proof}
We divide the proof into few steps. \\
\noindent
{\bf Step 1: Construct non-degenerate bilinear pairings.} 

The space
\[  \Hom_{W}(X \otimes \wedge^i V, Y) \times  \Hom_{W}(X^* \otimes \wedge^{n-i} V, \iota(Y)^{\bullet})
\]
is identified with 
\[ (X^* \otimes Y\otimes (\wedge^i V)^{\vee})^W \times (X  \otimes \iota(Y)^{\bullet}\otimes (\wedge^{n-i} V)^{\vee})^W 
\]
as in Section \ref{sec complex duals}. Let $\langle , \rangle_X^*$ be the bilinear pairing on $ X^* \times X$ such that $\langle f, x \rangle_X^*=f(x)$ for $f \in X^*$ and $x \in X$. Let $\langle , \rangle_Y^{\bullet}$ be the bilinear pairing on $  Y \times \iota(Y)^{\bullet}$ such that $\langle y, g \rangle_Y^{\bullet} =g(\kappa(y))$ for $g \in \iota(Y)^{\bullet}$ and $y \in Y$. Here $\kappa$ is defined as in Lemma \ref{lem property bullet form}. 

For each $i$, we first define the pairing $\langle , \rangle_{X, Y, \wedge^iV}$ on a larger space $(X^* \otimes Y\otimes (\wedge^i V)^{\vee}) \times (X \otimes \iota(Y)^{\bullet} \otimes (\wedge^{n-i} V)^{\vee})$ via the product of the pairings $\langle , \rangle_X^*$, $\langle , \rangle_Y^{\bullet}$ and $\langle , \rangle_{(\wedge^i V)^{\vee}}$ i.e.
\[ \langle f \otimes y\otimes \phi_{v_1 \wedge \ldots \wedge v_{n-i}} , x   \otimes g \otimes \phi_{v_{n-i+1} \wedge \ldots \wedge v_n}\rangle_{X, Y. \wedge^iV} =\langle f, x \rangle_X^* \langle y, g \rangle_Y^{\bullet} \langle \phi_{v_1 \wedge \ldots \wedge v_{n-i}}, \phi_{v_{n-i+1} \wedge \ldots \wedge v_n} \rangle_{(\wedge^iV)^{\vee}} .
\]
Since all the pairings $\langle, \rangle_X^*$, $\langle, \rangle_Y^{\bullet}$ and $\langle ,\rangle_{(\wedge^iV)^{\vee}}$ are bilinear, $\langle , \rangle_{X, Y, \wedge^iV}$ is bilinear and well-defined.

 This pairing $\langle , \rangle_{X, Y, \wedge^iV}$ is non-degenerate because $\langle, \rangle_X^*$, $\langle, \rangle_Y^{\bullet}$ and $\langle ,\rangle_{(\wedge^iV)^{\vee}}$ are non-degenerate. Note that $\langle , \rangle_{X, Y, \wedge^iV}$ is $W$-invariant, which follows from Lemma \ref{lem star form}(2), Lemma \ref{lem property bullet form}(2) and Lemma \ref{lem property wedge form}. 

In order to see $\langle , \rangle_{X, Y, \wedge^iV}$ restricted on $(X^*\otimes Y\otimes (\wedge^i V)^{\vee} )^W \times (X \otimes \iota(Y)^{\bullet}\otimes (\wedge^{n-i} V)^{\vee} )^W$ is still non-degenerate, we pick $\omega \in (X^*\otimes Y\otimes (\wedge^i V)^{\vee} )^W$. There exists $\omega' \in X \otimes (\wedge^{n-i} V)^{\vee} \otimes \iota(Y)^{\bullet} \otimes (\wedge^{n-i} V)^{\vee}$ such that $\langle \omega , \omega' \rangle_{X, Y, \wedge^i V} \neq 0$. Then by the $W$-invariance of $\langle , \rangle_{X, Y, \wedge^i V}$, we have $\langle \omega , \sum_{w \in W} w(\omega') \rangle_{X, Y, \wedge^i V} \neq 0$, as desired. Hence 
$\langle , \rangle_{X, Y, \wedge^iV}$ restricted on $(X^* \otimes Y\otimes (\wedge^i V)^{\vee})^W \times (X \otimes \iota(Y^{\bullet})\otimes (\wedge^{n-i} V)^{\vee})^W$  is still non-degenerate. \\

\noindent
{\bf Step 2: Compute the adjoint operator of $\overline{D}$ for $\langle , \rangle_{X, Y, \wedge^i V}$}

Recall that $\overline{D}$ is defined in (\ref{eqn D_i def}). For notational simplicity, set $\overline{D}^1_i=\overline{D}_{ (X^* \otimes Y \otimes (\wedge^iV)^{\vee})}$ and $\overline{D}^2_{n-i-1}=\overline{D}_{(X \otimes \iota(Y)^{\bullet} \otimes (\wedge^{n-i-1}V)^{\vee})}$, where we regard $X=(X^*)^*$. Recall that we fixed a basis $e_1, \ldots, e_n$ for $V$. We first show that
\begin{align} \nonumber
& \langle \overline{D}_i^1(f \otimes y \otimes \phi_{e_{k_1}\wedge \ldots \wedge e_{k_{n-i}}}),  x \otimes g \otimes \phi_{e_{k'_1} \wedge\ldots \wedge e_{k'_{i+1}}} \rangle_{X,Y,\wedge^{i+1}V} \\
=& \pm \langle f \otimes y \otimes \phi_{e_{k_1}\wedge \ldots \wedge e_{k_{n-i}}},  \overline{D}_{n-i-1}^2(x \otimes g \otimes \phi_{e_{k'_1} \wedge\ldots \wedge e_{k'_{i+1}}}) \rangle_{X, Y, \wedge^{i} V} \label{eqn adjoint D}
\end{align}
( $f \in X^*$, $x \in X$, $g \in Y^*$, $y \in Y$, $\phi_{e_{k_1}\wedge \ldots \wedge e_{k_{n-i}}} \in (\wedge^i V)^{\vee}$ and  $\phi_{e_{k'_1} \wedge\ldots \wedge e_{k'_{i+1}}} \in (\wedge^{n-i-1} V)^{\vee}$).

We divide into two cases. Suppose $|\left\{ k_1 ,\ldots, k_{n-i} \right\} \cap \left\{ k_1', \ldots, k_{i+1}' \right\}| \geq 2$. Then,
\begin{align*}
 & \langle \overline{D}_i^1(f \otimes y \otimes \phi_{e_{k_1}\wedge \ldots \wedge e_{k_{n-i}}}),  x \otimes g \otimes \phi_{e_{k'_1} \wedge\ldots \wedge e_{k'_{i+1}}} \rangle_{X, Y, \wedge^{i+1} V}\\
=& 0 =(-1)^{n-i}\langle f \otimes y \otimes \phi_{e_{k_1}\wedge \ldots \wedge e_{k_{n-i}}},  \overline{D}_{n-i-1}^2(x \otimes g \otimes \phi_{e_{k'_1} \wedge\ldots \wedge e_{k'_{i+1}}})) \rangle_{X, Y, \wedge^{i} V}
\end{align*}

For the second case, suppose $|\left\{ k_1 ,\ldots, k_{n-i} \right\} \cap \left\{ k_1', \ldots, k_{i+1}' \right\}| =1$. Let $k_p$ and $k_q'$ be the unique pair of indices such that $e_{k_p}=e_{k'_q}$. Then 
\begin{align*}
 & \langle \overline{D}_i^1(f \otimes y \otimes \phi_{e_{k_1}\wedge \ldots \wedge e_{k_{n-i}}}),  x \otimes g \otimes \phi_{e_{k'_1} \wedge\ldots \wedge e_{k'_{i+1}}} \rangle_{X, Y, \wedge^{i+1} V}\\
=& (-1)^{p+1} \langle \widetilde{e}_{k_p}.f , x \rangle_X^*\ \langle  y, g \rangle_Y^{\bullet}\ \langle \phi_{e_{k_1} \wedge \ldots \wedge \widehat{e}_{k_p} \wedge \ldots \wedge e_{k_{n-i}}}, \phi_{e_{k_{1}'} \wedge \ldots \wedge e_{k_{i+1}'}} \rangle_{(\wedge^{n-i-1}V)^{\vee}}  \\
  & \quad + (-1)^{p+1}\langle f , x \rangle_X^*\ \langle \widetilde{e}_{k_p}.y, g   \rangle_Y^{\bullet}\ \langle  \phi_{e_{k_1} \wedge \ldots \wedge \widehat{e}_{k_p} \wedge \ldots \wedge e_{k_{n-i}}}, \phi_{e_{k_{1}'} \wedge \ldots \wedge e_{k_{i+1}'}} \rangle_{(\wedge^{n-i-1}V)^{\vee}} \\
=& (-1)^{p+1}\langle f , -\widetilde{e}_{k_p}.x \rangle_X^*\ \langle y, g \rangle_Y^{\bullet}\  \langle \phi_{e_{k_1} \wedge \ldots \wedge \widehat{e}_{k_p} \wedge \ldots \wedge e_{k_{n-i}}}, \phi_{e_{k_{1}'} \wedge \ldots \wedge e_{k_{i+1}'}} \rangle_{(\wedge^{n-i-1}V)^{\vee}} \\
 & \quad +(-1)^{p+1}\langle f , x \rangle_X^* \langle y, -\widetilde{e}_{k_p}.g \rangle_Y^{\bullet}\ \langle  \phi_{e_{k_1} \wedge \ldots \wedge \widehat{e}_{k_p} \wedge \ldots \wedge e_{k_{n-i}}}, \phi_{e_{k_{1}'} \wedge \ldots \wedge e_{k_{i+1}'}} \rangle_{(\wedge^{n-i-1}V)^{\vee}} \\
=& (-1)^{n-i+q}\langle f , -\widetilde{e}_{k_q'}.x \rangle_X^*\  \langle  y, g \rangle_Y^{\bullet}\ \langle \phi_{e_{k_1} \wedge \ldots \wedge e_{k_{n-i}}},   \phi_{e_{k_1'} \wedge \ldots \wedge \widehat{e}_{k'_q} \wedge \ldots  \wedge e_{k_{i+1}}} \rangle_{(\wedge^{n-i}V)^{\vee}} \\
 & \quad +(-1)^{n-i+q} \langle f , x \rangle_X^*\ \langle  y, -\widetilde{e}_{k_q'}.g \rangle_Y^{\bullet}\ \langle \phi_{e_{k_1} \wedge \ldots \wedge e_{k_{n-i}}},  \phi_{e_{k_1'} \wedge \ldots \wedge \widehat{e}_{k'_q} \wedge \ldots  \wedge e_{k_{i+1}}} \rangle_{(\wedge^{n-i}V)^{\vee}}  \\
=&(-1)^{n-i}\langle f \otimes y \otimes \phi_{e_{k_1}\wedge \ldots \wedge e_{k_{n-i}}},  \overline{D}_{n-i-1}^2(x \otimes g \otimes \phi_{e_{k'_1} \wedge\ldots \wedge e_{k'_{i+1}}}) \rangle_{X, Y, \wedge^{i} V}
\end{align*}
 The second equality follows from Lemma \ref{lem star form}(1) and Lemma \ref{lem property bullet form}(1). The third equality follows from $e_{k_p}=e_{k'_q}$. Hence we have shown the equation (\ref{eqn adjoint D}). By linearity, we have for $\omega_1 \in (X^* \otimes Y \otimes (\wedge^i V)^{\vee})^W$ and $\omega_2 \in (X \otimes Y \otimes (\wedge^{n-i-1}V)^{\vee})^W$,
\begin{align} \label{eqn adjoint PD}
\langle D_i^1 \omega_1, \omega_2 \rangle_{X, Y, \wedge^{i+1}V}  &=(-1)^{n-i}  \langle  \omega_1, D_{n-i-1}^2\omega_2 \rangle_{X, Y, \wedge^{i}V},
\end{align}
where $D_i^1=D_{ (X^* \otimes Y \otimes (\wedge^iV)^{\vee})^W}$ and $D_{n-i-1}^2=D_{(X \otimes \iota(Y)^{\bullet} \otimes (\wedge^{n-i-1}V)^{\vee})^W}$.

%\noindent
%{\bf Step 3: Descend the pairing to  $\mathrm{Ext}^i_{\mathbb{H}}(X, Y) \times \mathrm{Ext}_{\mathbb{H}}^{n-i}(X^*, \iota(Y)^{\bullet})  \rightarrow \mathbb{C}$.}

%We use implicitly the fact that $X$ and $Y$ are finite-dimensional for linear algebra results below.

%For $U \subset (X \otimes \iota(Y)^{\bullet} \otimes (\wedge^{n-i}V)^{\vee})^W$ (resp.  $U \subset (X^* \otimes Y \otimes (\wedge^i V)^{\vee})^W$), let $U^{\bot}$ to be the subspace of $(X^* \otimes Y \otimes (\wedge^i V)^{\vee})^W$ (resp. $(X \otimes \iota(Y)^{\bullet} \otimes (\wedge^{n-i-1}V)^{\vee})^W$) orthogonal to $U$ with respect to $\langle.,.\rangle_{X, Y, \wedge^iV}$

 %The following two equation follows from linear algebra facts. (We remark that the non-degeneracy of $\langle , \rangle _{X, Y, \wedge^iV}$ is necessary to prove the equations.)
%\begin{equation} \label{eqn ker im 1}
% (\ker D_i^1)^{\bot} = \im D_{n-i-1}^2
%\end{equation}
%and
%\begin{equation} \label{eqn ker im 2} (\im D_{i-1}^1)^{\bot} = \ker D_{n-i}^2 .
%\end{equation}

%By (\ref{eqn ker im 1}), the pairing $\langle .,.\rangle_{X, Y. \wedge^iV}$ first descends to 
%\[\ker D_i^1 \times ((X \otimes \iota(Y)^{\bullet} \otimes (\wedge^{n-i}V)^{\vee})^W/ \im D_{n-i-1}^2).\]
%Then by (\ref{eqn ker im 2}),

Then by (\ref{eqn adjoint PD}) with some linear algebra argument, the pairing $\langle .,.\rangle_{X, Y. \wedge^iV}$ descends to 
\[\ker D_i^1 /\im D_{i-1}^1 \times \ker D_{n-i}^2 / \im D_{n-i-1}^2 \rightarrow \mathbb{C}.\]
By Proposition \ref{prop complex ext gp} and Lemma \ref{lem complexes isomorphic}(2), we have a natural non-degenerate pairing on 
\[  \mathrm{Ext}^i_{\mathbb{H}}(X, Y) \times \mathrm{Ext}_{\mathbb{H}}^{n-i}(X^*, \iota(Y)^{\bullet})  \rightarrow \mathbb{C}. \qedhere
\]
\end{proof}

\begin{remark} \label{rmk comment dua result}
We give few comments concerning the statement and the proof of Theorem \ref{thm poin dua}.
\begin{enumerate}
\item[(1)] If $X$ and $Y$ have the same central character, then $X^*$ and $\iota(Y^{\bullet})$ also have the same central character.
\item[(2)]  The use of the element $\widetilde{v}$ makes the computation in step 2 of the proof easier. 
\item[(3)] The choice of the duals is necessary to have a nice adjoint operator of $\overline{D}$ for the pairing $\langle .,.\rangle_{X, Y, \wedge^iV}$ in step 2. By Proposition \ref{prop equiv ext group}, one also obtains a non-degenerate pairing
\[  \mathrm{Ext}^i_{\mathbb{H}}(X, Y) \times \mathrm{Ext}_{\mathbb{H}}^{n-i}(X^{\bullet}, \iota(Y)^{*})  \rightarrow \mathbb{C}.
\]
\item[(4)] The Iwahori-Matsumoto involution is necessary to show the pairing $\langle .,.\rangle_{X, Y, \wedge^iV}$ is $W$-invariant and so non-degenerate in step 1.
\end{enumerate}
\end{remark}

\begin{remark}
If we replace the anti-involutions $*$ and $\bullet$ with Hermitian anti-involutions studied in \cite{BC}, one can still obtain an analogous statement for Theorem \ref{thm poin dua} involving those Hermitian anti-involutions.
\end{remark}

\subsection{Some consequences of the duality}

The duality sometimes provides an easier way to compute $\mathrm{Ext}$-groups. The first immediate application is to compute the top $\mathrm{Ext}$-group, provided that the $\mathbb{D}$-dual is known. Here the operator $\mathbb{D}$ is defined below:
\begin{definition} \label{def dualizing module}
For a finite-dimensional $\mathbb{H}$-module $X$, define $\mathbb{D}(X)= \iota(X^{\bullet})^* $. By Lemma \ref{lem two dual theta}, $\mathbb{D}(X) \cong \iota(\theta(X))$.
\end{definition}

\begin{corollary} \label{cor duality}
Let $X$ and $Y$ be finite-dimensional irreducible $\mathbb{H}$-modules. Then
\[  \mathrm{Ext}^n_{\mathbb{H}}(X,Y)  \cong \left\{ \begin{array}{c c} \mathbb{C} & \mbox{ if $X \cong \mathbb{D}(Y)$ } \\   0 & \mbox{ if $X \not\cong \mathbb{D}(Y)$ } \end{array} \right. .
\]
\end{corollary}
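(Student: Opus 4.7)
The plan is to specialize the duality of Theorem \ref{thm poin dua} to the top degree $i=n$ and then invoke Schur's lemma. Setting $i=n$, the pairing becomes
\[
\mathrm{Ext}^n_{\mathbb{H}}(X,Y) \times \mathrm{Hom}_{\mathbb{H}}(X^*, \iota(Y)^{\bullet}) \longrightarrow \mathbb{C},
\]
non-degenerate, since $\mathrm{Ext}^0_{\mathbb{H}} = \mathrm{Hom}_{\mathbb{H}}$. Thus $\dim \mathrm{Ext}^n_{\mathbb{H}}(X,Y) = \dim \mathrm{Hom}_{\mathbb{H}}(X^*, \iota(Y)^{\bullet})$, so the corollary reduces to determining this Hom-space.

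Next I would observe that the functors $(\cdot)^*$, $(\cdot)^{\bullet}$ and $\iota$ all preserve irreducibility of finite-dimensional modules (they are exact and involutive, up to natural isomorphism). Hence $X^*$ and $\iota(Y)^{\bullet}$ are both irreducible and finite-dimensional, and by Schur's lemma
\[
\mathrm{Hom}_{\mathbb{H}}(X^*, \iota(Y)^{\bullet}) \cong \begin{cases} \mathbb{C} & \text{if } X^* \cong \iota(Y)^{\bullet}, \\ 0 & \text{otherwise.}\end{cases}
\]

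It remains to translate the condition $X^* \cong \iota(Y)^{\bullet}$ into $X \cong \mathbb{D}(Y) = \iota(Y^{\bullet})^*$. Applying $(\cdot)^*$ to both sides, it suffices to show that $\iota$ commutes with $(\cdot)^{\bullet}$, i.e.\ there is a natural isomorphism $\iota(Y)^{\bullet} \cong \iota(Y^{\bullet})$ of $\mathbb{H}$-modules. This reduces to the algebra identity $\iota(h^{\bullet}) = \iota(h)^{\bullet}$ for all $h \in \mathbb{H}$: both sides are anti-algebra maps $\mathbb{H} \to \mathbb{H}$, so it is enough to check agreement on the generators $v \in V$ and $t_w$, $w\in W$, where the computation is immediate from the definitions of $\iota$ and $\bullet$ in Section \ref{sec duals} and \ref{ss IM invo}.

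The argument is short and contains no real obstacle; the only thing requiring care is the bookkeeping of duals to identify $(\iota(Y)^{\bullet})^*$ with $\mathbb{D}(Y)$. Combining the three steps gives the desired dichotomy.
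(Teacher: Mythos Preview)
Your proposal is correct and follows exactly the route the paper takes: the paper's proof is the single sentence ``This follows from Theorem \ref{thm poin dua} and Schur's Lemma,'' and your three steps simply spell out what that sentence means, including the bookkeeping identity $\iota(Y)^{\bullet}\cong\iota(Y^{\bullet})$ needed to match the condition $X^*\cong\iota(Y)^{\bullet}$ with $X\cong\mathbb{D}(Y)$.
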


\begin{proof}
This follows from Theorem \ref{thm poin dua} and Schur's Lemma. 
\end{proof}

One may also expect that the symmetry from the duality can help compute some $\mathrm{Ext}$-groups. We shall see in Section \ref{s ext ds} it makes the computation between a tempered module and a discrete series simpler. Other than that, for $R$ of rank $2$, the $\mathrm{Ext}$-groups between most of irreducible modules can be computed with only knowing the $W$-structure and central characters by Proposition \ref{prop complex ext gp} and Corollary \ref{cor duality}. For some higher rank cases, some extra information such as composition factors of standard modules may be needed for computations. Further discussions will appear in \cite{Ch2}.

\section{Duality for the Yoneda product} \label{s dua YP}

In this section, we prove the duality for Theorem \ref{thm poin dua} in the level of the Yoneda product. This information is more useful for studying the extension algebra of graded affine Hecke algebra modules. The proof for the improved duality is basically extending the one of Theorem \ref{thm poin dua}. In particular, we have to put the bilinear forms in the proof of Theorem \ref{thm poin dua} in a suitable form, that is the content of Section \ref{ss bilinear pair} below.

%\subsection{$W$-Decomposition for $\mathbb{H} \otimes_{\mathbb{C}[W]}(\mathrm{Res}_WX \otimes \wedge^iV)$}

%An advantage of projective objects $\mathbb{H} \otimes_{\mathbb{C}[W]} U$ is to reduce part of problems to $W$-modules via Frobenius theorem. In order to use the $W$-module structure, we need some good $W$-decomposition for $\mathbb{H} \otimes_{\mathbb{C}[W]}(\mathrm{Res}_WX \otimes \wedge^iV)$, where $X$ is an $\mathbb{H}$-module. We shall sometimes regard $\mathbb{H} \otimes_{\mathbb{C}[W]}(\mathrm{Res}_WX \otimes \wedge^iV)$, which should be clear from the context.

%For notational simplicity, set $P_i(X)=\mathbb{H} \otimes_{\mathbb{C}[W]}(\mathrm{Res}_WX \otimes \wedge^iV)$. Let $A_i(X)$ be the $W$-subspace of $P_i(X)$ spanned by the elements of the form $1 \otimes x \otimes \omega$ ($x \in X$ and $\omega \in \wedge^iV$). Let $B_i(X)=\mathrm{Res}_W\ker d_{i-1}=\mathrm{Res}_W\im d_i$, where $d$ are the differential maps in Section \ref{sec kos resol}. The sum of spaces of $A_i(X)$ and $B_i(X)$ does not constitute the whole space $P_i(X)$ and we do not know any canonical way to choose the remaining summand for $P_i(X)$. However, we will make some choice (which will be called $C_i(X)$) for our need later. 

\subsection{Yoneda extension}

We first review the Yoneda extension. 

We first recall the interpretation of $\mathrm{Ext}$-groups in terms of the Yoneda $\mathrm{Ext}$-groups. Let $\mathcal S(X, Y)$ be  the collection of exact sequences of the form:
\[  0 \rightarrow Y \rightarrow Z_i \rightarrow \ldots \rightarrow Z_1 \rightarrow X \rightarrow 0, \]
where $Z_k$ are $\mathbb{H}$-modules. Denote $\mathrm{YExt}^i_{\mathbb{H}}(X, Y)$ by the equivalence classes of $\mathcal S(X,Y)$ under the equivalence relation generated by the relation that for $S, S' \in \mathcal S(X, Y)$, $S \sim S'$ if and only if there exists a commutative diagram:
\[\xymatrix{ 
S: &\ar[r]0 & Y \ar[d]^{id} \ar[r] & Z_i \ar[d]\ar[r]& \cdots \cdots \ar[r]  & Z_1 \ar[d] \ar[r]& X \ar[d]^{id} \ar[r] &  0 \\
S': & 0   \ar[r] & Y \ar[r] & Z_i'   \ar[r] & \cdots \cdots  \ar[r]  & Z_1'  \ar[r]& X \ar[r] & 0 }
\]

For any $S\in \mathcal S(X, Y)$, we have a chain morphism between two complexes
\[\xymatrix{ 
     &\ar[r]0 & \ker d_{i-2} \ar[d]^{\eta} \ar[r] & P_{i-1} \ar[d]\ar[r]^{d_{i-2}} & \cdots \cdots \ar[r]  & P_0 \ar[d] \ar[r]& X \ar[d]^{id} \ar[r] &  0 \\
S: & 0   \ar[r] & Y \ar[r] & Z_i   \ar[r] & \cdots \cdots  \ar[r]  & Z_1  \ar[r]& X \ar[r] & 0 },
\]
where $P_j$ form a projective resolution of $X$. This gives rise a connecting homomorphism $\partial$ and the following exact sequence:
\[    \mathrm{Hom}_{\mathbb{H}}(P_{i-1}, Y) \rightarrow \mathrm{Hom}_{\mathbb{H}}(\ker d_{i-2}, Y) \stackrel{\partial}{\rightarrow} \mathrm{Ext}^i_{\mathbb{H}}(X, Y) \rightarrow 0.
\]
We obtain an element $\partial(\eta) \in \mathrm{Ext}^i_{\mathbb{H}}(X, Y)$. This defines a map from $\mathcal S(X, Y)$ to $\mathrm{Ext}^i_{\mathbb{H}}(X, Y)$. Since two elements in the same equivalence class of $\mathcal S(X, Y)$ have the same image under the map, we also have a map from $\mathrm{YExt}_{\mathbb{H}}^i(X, Y)$ to $\mathrm{Ext}_{\mathbb{H}}^i(X, Y)$, which is indeed an isomorphism (see e.g. \cite[Chapter III.5]{Ma} or \cite[Chapter 3.4]{We}). Denote the inverse of the isomorphism by $\mathrm{Yon}_{X, Y}^i: \mathrm{Ext}^i_{\mathbb{H}}(X, Y) \rightarrow \mathrm{YExt}^i_{\mathbb{H}}(X, Y)$. We may sometimes write $\mathrm{Yon}$ for $\mathrm{Yon}_{X,Y}^i$ for simplicity.

%From $ P_{i+1} \rightarrow P_i \rightarrow \im d_{i-1}$ and the left-exactness of $\mathrm{Hom}_{\mathbb{H}}(.,Y)$, we get an isomorphism $p^{\vee}: \Hom(\im d_{i-1}, Y) \rightarrow \ker d_i^{\vee}$. The connecting homomorphism $\partial$ can be realized as the composition of two maps:
%\[    \mathrm{Hom}_{\mathbb{H}}(\im d_{i-1}, Y) \stackrel{p^{\vee}}{\rightarrow} \ker d_i^{\vee}  \twoheadrightarrow  \frac{\ker d_i^{\vee} }{\im d_{i-1}^{\vee} } \cong \mathrm{Ext}^i_{\mathbb{H}}(X, Y) ,\]
%where $p^{\vee}$ is the natural daul map induced from $p: P_i \rightarrow \im d_{i-2}$, the second map is the natural surjection, and $d_i^{\vee}, d_{i-1}^{\vee}$ are the dual maps of $d_i, d_{i-1}$ respectively, which are defined in Section \ref{sec complex ext}.

\subsection{Alternate form of bilinear pairings} \label{ss bilinear pair}

We need several notations. We fix a basis $e_1, \ldots, e_n$ for $V$ as in Section \ref{ss pairing wedge V}. Let $n=\dim V$. Let $X$ be a finite-dimensional $\mathbb{H}$-module. Set $P_i= \mathbb{H} \otimes_{\mathbb{C}[W]}(\mathrm{Res}_WX \otimes \wedge^iV)$ and set ${}^{\iota}P_{i}=\mathbb{H}\otimes_{\mathbb{C}[W]}(\mathrm{Res}_W \iota(X)^{\bullet} \otimes \wedge^{i} V)$, which are projective modules. We use $d_i$ for the differential map $d_{i,X}$ and use ${}^{\iota}d_i$ for the differential map ${}^{\iota}d_{i,\iota(X)^{\bullet}}$ (see Section \ref{sec kos resol} for  $d_{i,X}$  and ${}^{\iota}d_{i,\iota(X)^{\bullet}}$). The maps $d_i$ and ${}^{\iota}d_i$ induce dual maps $d_i^*: P_i^* \rightarrow P_{i+1}^*$ and ${}^{\iota}d_i^* : {}^{\iota}P_i^* \rightarrow {}^{\iota}P_{i+1}^*$ respectively. 

Let $\vartheta_i: \mathrm{Res}_W X \otimes \wedge^{i}V \rightarrow P_i$  be the natural injective map such that the image of $\vartheta_{i}$ is the subspace $1 \otimes (\mathrm{Res}_W X \otimes \wedge^{i}V)$ of ${}^{\iota}P_{n-i}$.  Similarly, let ${}^{\iota}\vartheta_{i}: \mathrm{Res}_W\iota(X)^{\bullet} \otimes \wedge^{i}V \rightarrow {}^{\iota}P_{i}$ be the natural inclusion such that the image of ${}^{\iota}\vartheta_{i}$ is the subspace $1 \otimes (\mathrm{Res}_W\iota(X)^{\bullet} \otimes \wedge^{i}V)$ of ${}^{\iota}P_{i}$. Let $(\mathrm{Res}_W \iota(X)^{\bullet} \otimes \wedge^{i} V)^{\vee}$ be the dual $W$-representation of $\mathrm{Res}_W\iota(X)^{\bullet} \otimes \wedge^{i}V$. ${}^{\iota}\vartheta_{i}$ induces a dual map ${}^{\iota}\vartheta_{i}^{*}:({}^{\iota}P_{i})^{*}\rightarrow   (\mathrm{Res}_W \iota(X)^{\bullet} \otimes \wedge^{i} V)^{\vee}  $.

 Let $B_{i}$ be the linear isomorphism from  $ \mathrm{Res}_WX \otimes \wedge^{i}V \rightarrow (\mathrm{Res}_W \iota(X)^{\bullet} \otimes \wedge^{n-i} V)^{\vee}$ such that $B_{i}(x \otimes v_{1} \wedge \ldots \wedge v_{i})(f \otimes v_{i+1} \wedge \ldots \wedge v_n)=f(x) \langle v_1 \wedge \ldots \wedge v_i, v_{i+1} \wedge \ldots \wedge v_{n} \rangle_{\wedge^{i}V}$, where $\langle , \rangle_{\wedge^{i}V}$ is defined as in Section \ref{ss pairing wedge V}.  For an $\mathbb{H}$-map $\alpha: P_{i} \rightarrow ({}^{\iota}P_{n-i})^{*}$, we define $\mathrm{Tr}_{i}(\alpha)=\mathrm{trace}(B_{i}^{-1} \circ {}^{\iota}\vartheta_{n-i}^{*} \circ\alpha_i \circ \vartheta_{i})$ ({\it c.f.} the bilinear pairings in Lemma \ref{lem star form} and Section \ref{ss pairing wedge V}), where $\mathrm{trace}$ is the usual trace of a linear endomorphism.

Let $Y$ be a finite-dimensional $\mathbb{H}$-module. For notation simplicity, we use $d_i^{\vee}$ for the differential map $d_{i,X}^{\vee, Y}$ and ${}^{\iota}d_i^{\vee}$ for $d_{i,\iota(X)^{\bullet}}^{\vee, Y^*}$ in the remainder of this subsection (see Section \ref{sec complex ext} for the notion of $d_{i,X}^{\vee, Y}$ and $d_{i,\iota(X)^{\bullet}}^{\vee, Y^*}$). We also write $d_{i,W}^{\vee}$ for $d_{i,X,W}^{\vee,Y}$ and ${}^{\iota}d_{i,W}^{\vee}$ for $d_{i, \iota(X^{\bullet}),W}^{\vee, Y^*}$. The map ${}^{\iota}d_i$ also induces a natural map, denoted ${}^{\iota}d_i^{\vee,*}$, from $ \mathrm{Hom}_{\mathbb{H}}(Y, {}^{\iota}P_i^{*})$ to $ \mathrm{Hom}_{\mathbb{H}}(Y, {}^{\iota}P_{i+1}^*)$. We also have induced maps of ${}^{\iota}d_{i,W}^{\vee,*}$ in the level of $W$, but we shall describe more explicitly below.

We have the following diagram:
\[\xymatrix{ 
 \mathrm{Hom}_{\mathbb{H}}({}^{\iota}P_i, Y^{*}) \ar[r]^{\sim} \ar[d]^{\mathrm{Frobenius\ reciprocity}} &   \mathrm{Hom}_{\mathbb{H}}(Y, {}^{\iota}P_i^{*}) \ar[d]^{T}  \\
 \mathrm{Hom}_{W}(\mathrm{Res}_W\iota(X)^{\bullet} \otimes \wedge^iV, \mathrm{Res}_WY^*) \ar[r]^{\sim} &  \mathrm{Hom}_{W}( \mathrm{Res}_WY , (\mathrm{Res}_W\iota(X)^{\bullet} \otimes \wedge^iV)^{\vee})  },
\]
where the top and bottom maps are the natural isomorphisms, and the map $T$ can be expressed as
\begin{align} \label{eqn dual Frobenius}
T(\alpha)= {}^{\iota}\vartheta_{i}^{*} \circ \alpha .
\end{align} For instance, the top isomorphism denoted by $K$ is given by: for $\eta \in \mathrm{Hom}_{\mathbb{H}}({}^{\iota}P_i, Y^{*})$, $(K(\eta)(y))(h \otimes f \otimes v_1 \wedge \ldots \wedge v_i)=\eta(h \otimes f \otimes v_1 \wedge \ldots \wedge v_i)(y)$. We shall also write $\eta^*$ for $K(\eta)$. It is straightforward to check the diagram is commutative and so $T$ is also an isomorphism.

Thus we now have new complexes and we again translate the differential maps to the new complexes. The map ${}^{\iota}d_i^{\vee, *}: \mathrm{Hom}_{\mathbb{H}}(Y, {}^{\iota}P_{i+1}^*) \rightarrow \mathrm{Hom}_{\mathbb{H}}(Y, {}^{\iota}P_{i}^*)$ can be described as, where $\beta \in \mathrm{Hom}_{\mathbb{H}}(Y, {}^{\iota}P_{i+1}^*)$, $y \in Y$, $f \in \iota(X)^{\bullet}$ and $v_1, \ldots, v_{i+1} \in V$:
\begin{align*}
 & ({}^{\iota}d_i^{\vee, *}(\beta)(y))(h \otimes f \otimes v_1 \wedge \ldots \wedge v_{i+1}) \\
=& \sum_{j=1}^{i+1} (-1)^{j+1}\beta(\widetilde{v}_j^{*}.y)(h \otimes f \otimes v_1 \wedge \ldots \wedge v_{i+1}) -\sum_{j=1}^{i+1} (-1)^{j+1}\beta(y)(h \otimes \widetilde{v}_j.f \otimes v_1 \wedge \ldots \wedge v_{i+1}) \\
=& -\sum_{j=1}^{i+1} (-1)^{j+1}\beta(\widetilde{v}_j.y)(h \otimes \widetilde{v}_j.f \otimes v_1 \wedge \ldots \wedge v_{i+1}-\sum_{j=1}^{i+1} (-1)^{j+1}\beta(y)(h \otimes \widetilde{v}_j.f \otimes v_1 \wedge \ldots \wedge v_{i+1})  
\end{align*}

We also define ${}^{\iota}d_{i,W}^{\vee, *}$ as follows: for $\beta \in \mathrm{Hom}_{W}( \mathrm{Res}_WY , (\mathrm{Res}_W\iota(X)^* \otimes \wedge^iV)^{\vee})$,
\begin{align*}
 & ({}^{\iota}d_{i,W}^{\vee, *}(\beta)(y))(f \otimes v_1 \wedge \ldots \wedge v_{i+1}) \\
=& -\sum_{j=1}^{i+1} (-1)^{j+1} \beta(\widetilde{v}_j.y)(f \otimes v_1 \wedge \ldots \wedge \widehat{v}_j \wedge \ldots \wedge v_{i+1})  - \sum_{j=1}^{i+1} (-1)^{j+1} \beta(y)(\widetilde{v}_j.f \otimes v_1 \wedge \ldots \wedge \widehat{v}_j \wedge \ldots \wedge v_{i+1})
\end{align*}

By direct computations, we have the following:
\begin{lemma} \label{lem commute d} Let $\eta \in \mathrm{Hom}_{\mathbb{H}}({}^{\iota}P_i, Y^*)$. Then
\begin{enumerate}
\item ${}^{\iota}d_i^{\vee,*}\eta^*=({}^{\iota}d_i^{\vee}\eta)^*$,
\item ${}^{\iota}d_{i,W}^{\vee, *} (T(\eta^*))=T({}^{\iota}d_i^{\vee,*}(\eta^*))$.
\end{enumerate}

\end{lemma}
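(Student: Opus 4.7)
The plan is to verify both identities by direct computation after unwinding definitions. Part (1) is essentially formal: both ${}^{\iota}d_{i}^{\vee}$ (acting on $\eta$) and ${}^{\iota}d_{i}^{\vee,*}$ (acting on $\eta^{*}$) are realized as precomposition with ${}^{\iota}d_{i}$, and the defining property of the adjunction $K$ is simply $\eta^{*}(y)(z) = \eta(z)(y)$. Evaluating both sides of (1) at a pair $(y, z) \in Y \times {}^{\iota}P_{i+1}$ therefore reduces to $\eta({}^{\iota}d_{i}(z))(y)$ in both cases, so (1) follows at once.

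For part (2), I would evaluate both sides at $y \in Y$ and $f \otimes v_{1} \wedge \ldots \wedge v_{i+1} \in \mathrm{Res}_{W}\iota(X)^{\bullet} \otimes \wedge^{i+1}V$. Applying $T$ to the right-hand side of (2) amounts, via the defining relation $T(\alpha) = {}^{\iota}\vartheta_{i+1}^{*} \circ \alpha$ and part (1), to evaluating $\eta^{*}(y)\bigl({}^{\iota}d_{i}(1 \otimes f \otimes v_{1}\wedge\ldots\wedge v_{i+1})\bigr)$. The explicit formula for ${}^{\iota}d_{i}$ from Section \ref{sec kos resol} then splits this into two sums: one involving terms $\widetilde{v}_{j} \otimes f \otimes \ldots$ and one involving $1 \otimes \widetilde{v}_{j}.f \otimes \ldots$. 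The second sum is immediately recognized as $-\sum_{j}(-1)^{j+1}T(\eta^{*})(y)(\widetilde{v}_{j}.f \otimes \ldots)$, matching the $\widetilde{v}_{j}.f$-term in the explicit formula for ${}^{\iota}d_{i,W}^{\vee,*}T(\eta^{*})$.

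The essential input comes in rewriting the first sum. Since $\eta^{*}$ is $\mathbb{H}$-linear and the $\mathbb{H}$-action on ${}^{\iota}P_{i}^{*}$ is the $*$-dual action $(h.\phi)(z) = \phi(h^{*}.z)$, and since Lemma \ref{lem star form} gives $(\widetilde{v})^{*} = -\widetilde{v}$, one computes
\[ \eta^{*}(y)(\widetilde{v}_{j} \otimes f \otimes \ldots) = -(\widetilde{v}_{j}.\eta^{*}(y))(1 \otimes f \otimes \ldots) = -T(\eta^{*})(\widetilde{v}_{j}.y)(f \otimes \ldots). \]
The minus sign produced here reproduces exactly the leading minus sign in the defining formula for ${}^{\iota}d_{i,W}^{\vee,*}$, so matching term by term completes (2). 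The only real obstacle is sign bookkeeping; conceptually the lemma just expresses that the adjunction $K$ and the Frobenius-reciprocity restriction $T$ together intertwine the differentials on the two complexes, with the sign $-1$ in the formula for ${}^{\iota}d_{i,W}^{\vee,*}$ being the shadow of the identity $(\widetilde{v})^{*} = -\widetilde{v}$.
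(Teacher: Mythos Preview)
Your proposal is correct and follows exactly the approach the paper indicates: the paper's own proof of this lemma is simply ``By direct computations,'' and what you have written is a careful unwinding of precisely those computations, with the key input $(\widetilde{v})^{*}=-\widetilde{v}$ used in the right place. There is nothing to add.
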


\begin{lemma} \label{lem for B}
Let $x \in X$. Then
\begin{align*}
 & B_{i}(\sum_{j=1}^{i+1} (-1)^{j+1} \widetilde{e}_{k_j}.x \otimes e_{k_1} \wedge \ldots\wedge \widehat{e}_{k_j}\wedge \ldots \wedge e_{k_{i+1}})(f \otimes e_{l_1} \wedge \ldots \wedge e_{l_{n-i}}) \\
=& -(-1)^iB_{i+1}(x \otimes e_{k_1} \wedge \ldots \wedge e_{k_{i+1}})(\sum_{j=1}^{n-i} (-1)^{j+1} \widetilde{e}_{l_j}.f \otimes e_{l_1} \wedge \ldots \wedge \widehat{e}_{l_j} \wedge \ldots \wedge e_{l_{n-i}})
\end{align*}
\end{lemma}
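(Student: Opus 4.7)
The plan is to expand both sides explicitly, transfer the action of $\widetilde{e}_{k_j}$ from $x$ to $f$, and then reduce the statement to a Koszul-type identity in $\wedge^n V$ coming from the vanishing of $(n+1)$-forms. I would first introduce the abbreviations $\omega_k := e_{k_1}\wedge\cdots\wedge e_{k_{i+1}}$, $\omega_l := e_{l_1}\wedge\cdots\wedge e_{l_{n-i}}$, $L_j := e_{k_1}\wedge\cdots\wedge \widehat{e}_{k_j}\wedge\cdots\wedge e_{k_{i+1}}$, $M_j := e_{l_1}\wedge\cdots\wedge \widehat{e}_{l_j}\wedge\cdots\wedge e_{l_{n-i}}$, and $E := e_1\wedge\cdots\wedge e_n$. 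By the definition of $\langle\cdot,\cdot\rangle_{\wedge^iV}$ in Section~\ref{ss pairing wedge V}, $L_j\wedge\omega_l = \langle L_j,\omega_l\rangle_{\wedge^i V}\cdot E$ and $\omega_k\wedge M_j = \langle \omega_k, M_j\rangle_{\wedge^{i+1} V}\cdot E$, so unpacking $B_{i}$ and $B_{i+1}$ turns both sides of the claimed identity into weighted sums of these scalar pairings.

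Next, I would transfer $\widetilde{e}_{k_j}$ from the $X$-factor to the $\iota(X)^{\bullet}$-factor on the left-hand side. The computation in the proof of Lemma~\ref{lem star form} gives $\widetilde{v}^{\bullet} = \widetilde{v}$, while Lemma~\ref{lem iota v} gives $\iota(\widetilde{v}) = -\widetilde{v}$. Combining these with the definitions of the $\bullet$-dual and $\iota$-dual yields
\[
f(\widetilde{v}.x) \;=\; -(\widetilde{v}.f)(x) \qquad \text{for all } v\in V,\ x\in X,\ f\in \iota(X)^{\bullet}
\]
(this is essentially Lemma~\ref{lem property bullet form} applied with $Y=X$). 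Substituting this into the expanded left-hand side, the identity to be proved becomes
\[
\sum_{j=1}^{i+1}(-1)^{j+1}(\widetilde{e}_{k_j}.f)(x)\, L_j\wedge\omega_l \;=\; (-1)^i \sum_{j=1}^{n-i}(-1)^{j+1}(\widetilde{e}_{l_j}.f)(x)\, \omega_k\wedge M_j
\]
as an equality in $\wedge^n V$.

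To prove this reduced identity, the key observation is that $\omega_k\wedge\omega_l \in \wedge^{n+1}V = 0$. Letting $e_s^*\in V^*$ denote the basis dual to $e_1,\dots,e_n$ and $i_{e_s^*}$ the corresponding contraction, which is an antiderivation of degree $-1$ on $\wedge^\bullet V$, the vanishing $i_{e_s^*}(\omega_k\wedge\omega_l)=0$ together with the Leibniz rule gives
\[
0 \;=\; i_{e_s^*}(\omega_k)\wedge\omega_l \;+\; (-1)^{i+1}\,\omega_k\wedge i_{e_s^*}(\omega_l).
\]
Since $i_{e_s^*}(\omega_k)=\sum_{j}(-1)^{j+1}\delta_{s,k_j}L_j$ and $i_{e_s^*}(\omega_l)=\sum_{j}(-1)^{j+1}\delta_{s,l_j}M_j$, multiplying by the scalar $(\widetilde{e}_s.f)(x)$ and summing over $s=1,\dots,n$ collapses the Kronecker deltas and produces precisely the two sums on the left and right of the displayed identity; rearranging gives what is wanted.

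The main thing to watch is the sign bookkeeping: the factors $(-1)^{j+1}$ from each sum, the antiderivation sign $(-1)^{i+1}$, and the extra minus sign from transferring $\widetilde{v}$ across the $\bullet$-$\iota$ pairing must combine to exactly the prescribed factor $-(-1)^i$ on the right-hand side. Aside from this, the argument is a direct unwinding of definitions, so I expect no structural difficulty beyond the sign check.
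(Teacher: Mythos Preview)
Your proof is correct. The paper itself omits the argument entirely, writing only ``Direct computation (similar to the proof of Lemma~\ref{lem tr transfer D} below). We omit the details.'' That reference suggests the author had in mind the same style of argument used there: fix the basis $e_1,\dots,e_n$, split into cases according to how many indices the tuples $(k_1,\dots,k_{i+1})$ and $(l_1,\dots,l_{n-i})$ share, and track signs by hand as in the proof of Theorem~\ref{thm poin dua}.

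Your route is genuinely different and cleaner. Rather than a case analysis on index overlaps, you reduce the combinatorial identity to the single structural fact that $\omega_k\wedge\omega_l\in\wedge^{n+1}V=0$, and then extract the two sums simultaneously by applying the contraction $i_{e_s^\ast}$ and the graded Leibniz rule, weighting by $(\widetilde{e}_s.f)(x)$ and summing over $s$. This handles all index configurations at once and makes the appearance of the sign $(-1)^i$ transparent: it is exactly the antiderivation sign $(-1)^{\deg\omega_k}=(-1)^{i+1}$ after moving it across. The transfer step $f(\widetilde{v}.x)=-(\widetilde{v}.f)(x)$ is justified precisely as you say, via $\widetilde{v}^{\bullet}=\widetilde{v}$ and $\iota(\widetilde{v})=-\widetilde{v}$; this is the same observation recorded at the end of the proof of Lemma~\ref{lem tr tansfer E}. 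Your sign bookkeeping checks out: after pulling the minus sign from the transfer across, the residual identity in $\wedge^nV$ is exactly the Leibniz relation you wrote, and the Kronecker deltas collapse the $s$-sum to the $k_j$- and $l_j$-indexed sums as required.
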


\begin{proof}
Direct computation (similar to the proof of Lemma \ref{lem tr transfer D} below). We omit the details.
\end{proof}

\begin{proposition} \label{prop duality tr}
For any $\eta_1 \in \ker d_i^{\vee} \setminus \im d_{i-1}^{\vee}$, there exists $\eta_2 \in \ker {}^{\iota}d_{n-i}^{\vee}\setminus \im {}^{\iota}d_{n-i-1}^{\vee}$ such that $\mathrm{Tr}_i(\eta_2^{*} \circ \eta_1) \neq 0$. 
\end{proposition}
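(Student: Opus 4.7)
The plan is to show that
\[
T_i(\eta_1, \eta_2) := \mathrm{Tr}_i(\eta_2^* \circ \eta_1)
\]
defines, after passing to cohomology, a non-degenerate bilinear pairing
\[
\mathrm{Ext}^i_{\mathbb{H}}(X, Y) \times \mathrm{Ext}^{n-i}_{\mathbb{H}}(\iota(X)^\bullet, Y^*) \longrightarrow \mathbb{C},
\]
from which the existence of $\eta_2$ in the proposition follows by standard linear algebra.

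First, I would make $T_i$ completely explicit in coordinates. Fix a basis $\{x_a\}$ of $X$ with dual basis $\{x_a^*\}$ (interpreted as a basis of the underlying vector space of $\iota(X)^\bullet$), and let $\{e_I\}$ range over wedges of the chosen basis $e_1,\ldots,e_n$ of $V$ indexed by ordered subsets $I \subset \{1,\ldots,n\}$ of size $i$. Writing $I^c$ for the ordered complement, I expect a direct calculation unwinding the definitions of $\vartheta_i$, ${}^\iota\vartheta_{n-i}$ and $B_i$, and using the Frobenius reciprocity isomorphisms of Section~\ref{ss bilinear pair}, to give
\[
T_i(\eta_1, \eta_2) = \sum_{a, I} \varepsilon_I \, \bar{\eta}_2(x_a^* \otimes e_{I^c})\bigl(\bar{\eta}_1(x_a \otimes e_I)\bigr),
\]
where $\bar{\eta}_1, \bar{\eta}_2$ are the $W$-equivariant counterparts of $\eta_1, \eta_2$ under Frobenius reciprocity and $\varepsilon_I = \pm 1$ is the shuffle sign. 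This expresses $T_i$ as the product of the evaluation pairings on $X^* \times X$ and $Y^* \times Y$ with the pairing $\langle \cdot, \cdot \rangle_{\wedge^i V}$ of Section~\ref{ss pairing wedge V}.

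Second, I would verify the adjointness relation: for $\eta_1 \in \mathrm{Hom}_W(X \otimes \wedge^i V, Y)$ and $\eta_2 \in \mathrm{Hom}_W(\iota(X)^\bullet \otimes \wedge^{n-i-1} V, Y^*)$,
\[
T_{i+1}(d_{i,W}^\vee \eta_1, \eta_2) = (-1)^{n-i} T_i(\eta_1, {}^\iota d_{n-i-1,W}^\vee \eta_2).
\]
Both sides expand to sums indexed by the $\widetilde{e}_j$ appearing in the two differentials. The action of $\widetilde{v}_j$ on the $Y$-argument is transferred via Lemma~\ref{lem star form}(1) and Lemma~\ref{lem property bullet form}(1); the action on the $X$-argument, together with the wedge-reindexing, is precisely the content of Lemma~\ref{lem for B}, which says that $B_i$ intertwines the Koszul differentials on $X \otimes \wedge^\bullet V$ and $(\iota(X)^\bullet \otimes \wedge^\bullet V)^\vee$ up to the sign $-(-1)^i$. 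Consequently $T_i$ descends to a well-defined bilinear pairing on $\mathrm{Ext}^i_{\mathbb{H}}(X,Y) \times \mathrm{Ext}^{n-i}_{\mathbb{H}}(\iota(X)^\bullet, Y^*)$.

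Third, I would establish non-degeneracy of the descended pairing. On the cochain level, $T_i$ is non-degenerate since it is built from non-degenerate component pairings; the $W$-averaging argument from Step 1 of the proof of Theorem~\ref{thm poin dua} shows $T_i$ remains non-degenerate on the $W$-invariant subspaces. Combined with the adjointness of Step 2, standard finite-dimensional linear algebra (using Corollary~\ref{cor finite dim ext}) implies that the induced pairing on cohomology is non-degenerate, which yields the required $\eta_2$ for any $\eta_1$ representing a non-zero class in $\mathrm{Ext}^i_{\mathbb{H}}(X, Y)$.

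The main obstacle I foresee is the sign bookkeeping in Step 1: Frobenius reciprocity, the dualization $\eta \mapsto \eta^*$, $\vartheta_i$ and $B_i$ each contribute signs that must reconcile with the $-(-1)^i$ sign in Lemma~\ref{lem for B} to produce the clean adjointness formula of Step 2. This is mechanical but requires care analogous to Step 2 of the proof of Theorem~\ref{thm poin dua}.
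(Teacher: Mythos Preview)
Your proposal is correct and follows essentially the same approach as the paper: the paper's proof explicitly reduces to the argument of Theorem~\ref{thm poin dua}, establishing the adjointness relation $\mathrm{Tr}_{i+1}((\eta')^* \circ (d^{\vee}_{i}\eta))=(-1)^{i}\mathrm{Tr}_i(({}^{\iota}d_{n-i-1}^{\vee}\eta')^* \circ \eta)$ via Lemma~\ref{lem for B} and then invoking the same $W$-averaging non-degeneracy and linear algebra argument you describe. Your sign $(-1)^{n-i}$ differs from the paper's $(-1)^{i}$ only by the global constant $(-1)^n$, which is immaterial for the non-degeneracy conclusion.
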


\begin{proof}
The proof is similar to the one of Theorem \ref{thm poin dua}. We briefly explain how to obtain the statement without going through all the computations. For simplicity set $Q_i =\mathrm{Res}_WX \otimes \wedge^iV$. We identify $\mathrm{Hom}_{W}(Q_i, Q_i)$ with $Q_i^{\vee} \otimes Q_i$. Then for $\sum_{k}  q_k^{\vee}\otimes q_k' \in Q_i^{\vee} \otimes Q_i$, 
\begin{align} \label{eqn trace formula}
\mathrm{trace}(\sum_{k} q_k^{\vee} \otimes q_k') =\sum_k q_k^{\vee}(q_k').
\end{align}

Let $\eta \in \mathrm{Hom}_{\mathbb{H}}(P_i, Y)$ and let $\eta' \in \mathrm{Hom}_{\mathbb{H}}({}^{\iota}P_{n-i-1}, Y^*)$. We regard $\eta \circ \vartheta_i$ as an element in $\mathrm{Hom}_W(\mathrm{Res}_WX \otimes \wedge^iV, \mathrm{Res}_WY)$ and ${}^{\iota}\vartheta_{n-i}^{*} \circ \eta'$ as an element in $\mathrm{Hom}_{W}( \mathrm{Res}_WY , (\mathrm{Res}_W\iota(X)^{\bullet} \otimes \wedge^iV)^{\vee})$. Using (\ref{eqn trace formula}) and Lemma \ref{lem for B}, we have $\mathrm{trace}(B_{i+1}^{-1}\circ ({}^{\iota}\vartheta_{n-i-1}^{*} \circ \eta') \circ (d_{i,W}^{\vee}(\eta \circ \vartheta_i))=(-1)^{i+1}\mathrm{trace}(B_{i}^{-1}\circ ({}^{\iota}d_{n-i-1,W}^{\vee, *}({}^{\iota}\vartheta_{n-i-1}^{*} \circ \eta') \circ (\eta \circ \vartheta_i))$. We also have $d_{i,W}^{\vee}(\eta \circ \vartheta_i)=(d_i^{\vee} \eta)\circ \vartheta_{i+1}$ and ${}^{\iota}d_{n-i-1,W}^{\vee, *}({}^{\iota}\vartheta_{n-i-1}^{*} \circ (\eta')^*)={}^{\iota}\vartheta_{n-i}^{*} \circ ({}^{\iota}d_{n-i-1}^{\vee}\eta')^*$ (e.g. by Lemma \ref{lem commute d} and (\ref{eqn dual Frobenius})). Combining the equations, we have
\begin{align} \label{eqn adjoint for trac}  \mathrm{Tr}_{i+1}((\eta')^* \circ (d^{\vee}_{i}\eta))=(-1)^{i}\mathrm{Tr}_i(({}^{\iota}d_{n-i-1}^{\vee}  \eta')^* \circ \eta)
\end{align}

Thus $\mathrm{Tr}_i$ replaces the non-degenerate bilinear pairing in Step 1 of the proof of Theorem \ref{thm poin dua} and the equation (\ref{eqn adjoint for trac}) replaces the computation of an adjoint operator in step 2 of the proof of Theorem \ref{thm poin dua}. Then with some linear algebra argument, we can obtain the statement.
\end{proof}

\subsection{Lemma for transferring information between $\mathrm{Tr}$} \label{ss lemma trace}
We retain the notation in Section \ref{ss bilinear pair}. The following lemma is an essential part for the proof of Theorem \ref{thm dua YP}.
\begin{lemma} \label{lem bilinear pair}
Let $\alpha: P_{i} \rightarrow {}^{\iota}P_{n-i-1}^*$ be an $\mathbb{H}$-map. Consider
\[\xymatrix{ 
 P_{i+1}\ar[r]^{d_i} &   P_{i} \ar[d]_{\alpha} & \\
                                             &  {}^{\iota}P_{n-i-1}^{*} \ar[r]^{{}^{\iota}d_{n-i-1}^*}  &  {}^{\iota}P_{n-i}^{*}  }.
\]
We have $\mathrm{Tr}_i({}^{\iota}d_{n-i-1}^* \circ \alpha) =-(-1)^i\mathrm{Tr}_{i+1}(\alpha \circ d_{i})$. 
\end{lemma}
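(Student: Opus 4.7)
The plan is to prove this by a direct computation that closely parallels the derivation of equation (\ref{eqn adjoint for trac}) in the proof of Proposition \ref{prop duality tr}, but carried out for an arbitrary $\mathbb{H}$-map $\alpha$ rather than one of the factored form $(\eta')^* \circ \eta$. I would choose a basis $\{x_a\}$ of $X$ with dual basis $\{f_a\}$ of $\iota(X)^{\bullet}$, together with the standard wedge basis of $\wedge^{\bullet}V$, and unwind both sides of the identity via the identification $\mathrm{Hom}_W(Q_j, Q_j) \cong Q_j^{\vee}\otimes Q_j$ (with $Q_j = \mathrm{Res}_W X \otimes \wedge^j V$) and the trace formula $\mathrm{trace}(\sum_k q_k^{\vee}\otimes q_k')=\sum_k q_k^{\vee}(q_k')$ from equation (\ref{eqn trace formula}). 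Both traces then become weighted sums of evaluations of the form $\alpha(1\otimes x_a \otimes \omega)(\xi)$ for $\xi \in {}^{\iota}P_{\bullet}$, with weights coming from the complement-wedge pairing that defines $B_j$.

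Then, using the $\widetilde{v}$-form of the Koszul differentials (Proposition \ref{prop equal d}), the $\mathbb{H}$-linearity of $\alpha$, and the transfer rule $(h.\beta)(p)=\beta(h^*.p)$ with $(\widetilde{v})^*=-\widetilde{v}$ from Lemma \ref{lem star form}, each of the two traces expands as an alternating sum of two families: a \emph{left} family of terms in which a $\widetilde{e}_k$-action sits on the $\mathbb{H}$-factor of ${}^{\iota}P$, and an \emph{internal} family in which it has been absorbed into the $X$- or the $\iota(X)^{\bullet}$-factor. A combinatorial bijection $(k',j) \leftrightarrow (k=k' \cup \{l_j(k')\}, j')$ between the multi-indices occurring on the two sides pairs the individual terms: the left family matches termwise, while the internal family matches after relabeling $a \leftrightarrow b$ and using that $\widetilde{v}$ acts on $\iota(X)^{\bullet}$ as minus the transpose of its action on $X$ (a consequence of Lemma \ref{lem iota v} together with $(\widetilde{v})^{\bullet}=\widetilde{v}$).

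Collecting the Koszul signs $(-1)^{j+1}$, the position signs produced by the bijection, and the complement-wedge signs $\epsilon(k)/\epsilon(k')$, both families yield the common overall factor $-(-1)^i$, which is precisely the identity to be proved. The main obstacle is this sign bookkeeping across four independent sources (Koszul, complement-wedge, $*$-adjoint, $\iota$-twist). Lemma \ref{lem for B} is in essence a streamlined encoding of exactly these sign manipulations, so an alternative and shorter approach is to first rewrite both traces in the bilinear-pairing language of that lemma, and then invoke it once to conclude.
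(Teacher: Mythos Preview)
Your proposal is correct and is essentially the paper's own argument: your ``left family / internal family'' split is exactly the decomposition $d_i=\mathcal D_i-\mathcal E_i$ that the paper introduces, your termwise matching of the left family via $\mathbb H$-linearity and $(\widetilde v)^*=-\widetilde v$ is the content of Lemma~\ref{lem tr transfer D}, and your relabeling $a\leftrightarrow b$ for the internal family is the coordinate form of the trace-cyclicity step the paper packages as Lemma~\ref{lem tr tansfer E}. One small caveat on your closing remark: Lemma~\ref{lem for B} only encodes the internal ($\mathcal E$-type) half of the identity, so invoking it once does not by itself dispose of the left ($\mathcal D$-type) terms; you still need the $\mathbb H$-linearity argument there.
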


\begin{proof}
Define $\mathcal D_i, \mathcal E_i: P_{i+1}\rightarrow P_i$ as follows:
\begin{align}
\mathcal D_{i}(h \otimes x \otimes v_1 \wedge \ldots \wedge v_{i+1}) &= \sum_{j=1}^{i+1} (-1)^{j+1} h\widetilde{v}_j \otimes x \otimes v_1 \wedge \ldots \wedge \widehat{v}_j \wedge \ldots \wedge v_{i+1} 
\end{align}
\begin{align}
\mathcal E_{i}(h \otimes x \otimes v_1 \wedge \ldots \wedge v_{i+1}) &= \sum_{j=1}^{i+1} (-1)^{j+1} h\otimes \widetilde{v}_j.x \otimes v_1 \wedge \ldots \wedge \widehat{v}_j \wedge \ldots \wedge v_{i+1} 
\end{align}
Both $\mathcal D_i$ and $\mathcal E_i$ are well-defined $W$-maps (by Lemma \ref{lem tilde element}). Moreover,
\[  d_i  = \mathcal D_i -\mathcal E_i.
\]
We similarly define the maps: ${}^{\iota}\mathcal D_i, {}^{\iota}\mathcal E_i: {}^{\iota}P_{i+1} \rightarrow  {}^{\iota}P_{i}$ and denote  ${}^{\iota}\mathcal D_i^*, {}^{\iota}\mathcal E_i^*: {}^{\iota}P_{i}^* \rightarrow  {}^{\iota}P_{i+1}^*$ for the daul maps. We also have ${}^{\iota}d_i^*={}^{\iota}\mathcal D_i^*-{}^{\iota}\mathcal E_i^*$.

We need some more notations. Let $\wp_i: P_i \rightarrow \mathrm{Res}_WX \otimes \wedge^iV$ be a linear map such that $\wp_i \circ \vartheta_i =\Id_Q$.

Recall that ${}^{\iota}\vartheta^{*}_i:{}^{\iota}P_i^{*}\rightarrow  (\mathrm{Res}_WX \otimes \wedge^iV)^{\vee} $. Let ${}^{\iota}\wp_i^{*}:  (\mathrm{Res}_WX \otimes \wedge^iV)^{\vee}\rightarrow {}^{\iota}P_i^*$ be a linear map such that ${}^{\iota}\vartheta^{*}_i \circ {}^{\iota}\wp_i^{*} =\Id$. 

It is straightforward to verify that 
\begin{align} \label{eqn for operator E 1} {}^{\iota}\vartheta^{*}_{n-i-1}\circ \alpha \circ \mathcal E_{i}\circ \vartheta_{i+1}={}^{\iota}\vartheta^{*}_{n-i-1}\circ  \alpha \circ \vartheta_{i}\circ \wp_{i} \circ\mathcal E_{i}\circ \vartheta_{i+1},
\end{align}
which follows from the fact that $\mathcal E_{n-i}$ sends the subspace $1 \otimes \mathrm{Res}_WX \otimes \wedge^{n-i+1}V$ to $1 \otimes \mathrm{Res}_WX \otimes \wedge^{n-i}V$.  Similarly, we have
\begin{align} \label{eqn for operator E 2} {}^{\iota}\vartheta^{*}_{n-i} \circ {}^{\iota}{\mathcal E}_{n-i-1}^{*} \circ \alpha \circ \vartheta_{i}={}^{\iota}\vartheta^{*}_{n-i} \circ {}^{\iota}{\mathcal E}_{n-i-1}^{*}\circ {}^{\iota}\wp_{n-i-1}^{*} \circ {}^{\iota}\vartheta^{*}_{n-i-1}\circ \alpha \circ \vartheta_{i} .\end{align} 
Now we have 
\begin{align*}
 & \mathrm{Tr}_{i+1}( \alpha \circ \mathcal E_{i})  \\
=& \mathrm{Tr}_{i+1}( \alpha \circ \vartheta_{i}\circ \wp_{i} \circ\mathcal E_{i}) \quad \mbox{ by (\ref{eqn for operator E 1}) }\\
=& -(-1)^{i}\mathrm{trace} ( (B_{i+1}^{-1}\circ {}^{\iota}\vartheta^{*}_{n-i-1}\circ \alpha \circ \vartheta_{n-i})\circ (B_{i}^{-1}  \circ {}^{\iota}\vartheta^{*}_{n-i} \circ {}^{\iota}{\mathcal E}_{n-i-1}^{*}\circ {}^{\iota}\wp_{n-i-1}^{*} \circ  B_{i+1})) \\
 &\quad \quad \quad \quad \quad \quad \quad \quad \quad \quad \quad \quad \quad \quad \quad \quad \quad \quad \mbox{(by Lemma \ref{lem tr tansfer E} below)} \\
=&  -(-1)^{i}\mathrm{trace} ((B_{i}^{-1}  \circ {}^{\iota}\vartheta^{*}_{n-i} \circ {}^{\iota}{\mathcal E}_{n-i-1}^{*}\circ {}^{\iota}\wp_{n-i-1}^{*} \circ  B_{i+1})\circ (B_{i+1}^{-1}\circ {}^{\iota}\vartheta^{*}_{n-i+1} \circ\alpha \circ \vartheta_{n-i}) ) \\
  &  \quad \quad \quad \quad \quad \quad \quad \quad \quad \quad \quad \quad \quad \quad \quad \quad \quad \quad  \mbox{(by the fact $\mathrm{trace}(MN)=\mathrm{trace}(NM)$)}\\
	=&  -(-1)^{i}\mathrm{trace} (B_{i}^{-1}  \circ {}^{\iota}\vartheta^{*}_{n-i} \circ {}^{\iota}{\mathcal E}_{n-i-1}^{*}\circ {}^{\iota}\wp_{n-i-1}^{*} \circ {}^{\iota}\vartheta^{*}_{i-1} \circ \alpha \circ \vartheta_{i})  \\
		=&  -(-1)^{i}\mathrm{trace} (B_{i}^{-1}  \circ {}^{\iota}\vartheta^{*}_{n-i} \circ {}^{\iota}{\mathcal E}_{n-i-1}^{*} \circ \alpha \circ \vartheta_{i}) \quad \mbox{ by (\ref{eqn for operator E 2})} \\
				=&  -(-1)^{i}\mathrm{Tr}_{i}( {}^{\iota}{\mathcal E}_{n-i-1}^{*} \circ \alpha)  
\end{align*}
Now combining with Lemma \ref{lem tr transfer D} below, we have $\mathrm{Tr}_i({}^{\iota}d_{n-i-1}^* \circ \alpha) =-(-1)^i\mathrm{Tr}_{i+1}(\alpha \circ d_{i})$. 
\end{proof}

\begin{lemma} \label{lem tr transfer D}
With the notations in the proof of Lemma \ref{lem bilinear pair}, 
\[ \mathrm{Tr}_{i}( {}^{\iota}\mathcal D_{n-i-1}^* \circ \alpha)=-(-1)^{i}\mathrm{Tr}_{i+1}(\alpha \circ \mathcal D_{i}) .\]
\end{lemma}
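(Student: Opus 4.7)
The strategy is a direct trace computation in an explicit basis, parallel to the $\mathcal E$-case worked out inside the proof of Lemma \ref{lem bilinear pair}. The essential new complication, compared with $\mathcal E$, is that $\mathcal D_i$ does not preserve the subspace $1\otimes Q_i$ of $P_i$; instead it introduces a factor $\widetilde v_j$ in the Hecke-algebra slot. The factorization trick of equations (\ref{eqn for operator E 1})--(\ref{eqn for operator E 2}) is therefore unavailable. Instead, one has to use the $\mathbb H$-linearity of $\alpha$ to move the $\widetilde v_j$ through $\alpha$ onto the codomain ${}^\iota P_{n-i-1}^*$, whose $\mathbb H$-structure is the $*$-dual one. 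Combined with the identity $\widetilde v^*=-\widetilde v$ (from the proof of Lemma \ref{lem star form}), this converts the $\widetilde v_j$-action on the functional into a $\widetilde v_j$-action on its argument at the cost of a sign $-1$.

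To carry this out, I would fix the ordered basis $e_1,\ldots,e_n$ of $V$, a basis $\{x_l\}$ of $X$, and the dual basis $\{f_l\}$ of $\iota(X)^\bullet$. Expanding both sides against the induced bases of $Q_{i+1}$, $Q_i$, ${}^\iota Q_{n-i-1}$, ${}^\iota Q_{n-i}$, and applying the manipulations above on the $\mathrm{Tr}_{i+1}(\alpha\circ \mathcal D_i)$-side, together with the direct formula for ${}^\iota\mathcal D_{n-i-1}^*$ on the other side, both $\mathrm{Tr}_{i+1}(\alpha\circ\mathcal D_i)$ and $\mathrm{Tr}_i({}^\iota\mathcal D_{n-i-1}^*\circ\alpha)$ reduce to sums of the same shape: over triples $(l,\underline k',r)$ with $|\underline k'|=i$ and $r\notin\underline k'$, of the quantity $\alpha(1\otimes x_l\otimes e_{\underline k'})(\widetilde e_r\otimes f_l\otimes e_{(\underline k')^c\setminus r})$, weighted by the alternating Koszul signs in $\mathcal D_i$ and ${}^\iota\mathcal D_{n-i-1}$, the sign from $\widetilde v^*=-\widetilde v$, and the combinatorial shuffle sign $\epsilon(\cdot)$ arising from the wedge pairing $\langle\cdot,\cdot\rangle_{\wedge V}$ inside the trace formula for $B^{-1}$.

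The claim then reduces to a single sign identity comparing the two weights: the shuffle-sign change $\epsilon(\underline k'\cup\{r\}) = (-1)^{i+j_1-j_2}\epsilon(\underline k')$, where $j_2$ is the position of $r$ inside $\underline k'\cup\{r\}$ and $j_1$ is the position of $r$ inside $(\underline k')^c$, combined with the difference $(-1)^{j_1-j_2}$ between the two alternating signs, together produce a net factor $(-1)^i$; the additional overall $-1$ coming from the $*$-step on the $\mathrm{Tr}_{i+1}$-side then yields the asserted sign $-(-1)^i$. The main obstacle is exactly this sign bookkeeping, tracking the three independent sign sources above; the computation is routine but intricate, and is of the same flavor as the bookkeeping needed in the proof of Lemma \ref{lem for B}.
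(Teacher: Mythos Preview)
Your proposal is correct and follows essentially the same approach as the paper: a direct trace computation in explicit bases $\{x_l\}$, $\{f_l\}$, $\{e_{k_1}\wedge\cdots\}$, with the key maneuver being exactly the one you identify---using the $\mathbb H$-linearity of $\alpha$ together with $\widetilde e^{\,*}=-\widetilde e$ to transfer the $\widetilde e_{k_j}$ from the ${}^\iota P$-argument to the $P$-argument, followed by the combinatorial sign comparison between the two complementary index sets. The paper's proof carries out precisely this computation, with the sign bookkeeping packaged via auxiliary integers $N(k_j^L,L)$ recording the insertion position of the ``moving'' index.
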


\begin{proof}
We fix a basis $x_1, \ldots, x_{\dim X}$ for $X$. Then $\left\{ x_r \otimes e_{l_1} \wedge \ldots \wedge e_{l_{i}} \right\}_{1 \leq r \leq \dim X, 1 \leq l_1 <\ldots <l_{i} \leq n}$ forms a basis for $\mathrm{Res}_WX \otimes \wedge^{i}V$ and $\left\{ x_r \otimes e_{l_1'} \wedge \ldots \wedge e_{l_{i+1}'} \right\}_{1 \leq r \leq \dim X, 1 \leq l_1' <\ldots <l_{i+1}' \leq n}$ forms a basis for $\mathrm{Res}_WX \otimes \wedge^{i+1}V$. Let $f_{x_1},\ldots, f_{x_{\dim V}}$ be a basis for $\mathrm{Res}_W\iota(X)^{\bullet}$ dual to $x_1, \ldots, x_{\dim X}$ in the sense that $f_{x_r}(x_s)=\delta_{r,s}$.

Let $\mathcal L$ (resp. $\mathcal L'$) be the collection of elements $(l_1, \ldots, l_{i})$ (resp. $(l_1',\ldots, l_{n-i+1}')$) in $\mathbb{Z}^{i}$ (resp. $\mathbb{Z}^{n-i+1}$) such that $1 \leq l_1 < \ldots< l_{i} \leq n$ (resp. $1 \leq l_1'<\ldots <l_{i+1}'\leq n$).  

For $L=(l_1, \ldots, l_{i}) \in \mathcal L$, fix integers $k_1^L,\ldots, k_{n-k}^L$ such that  
\[\langle e_{l_1} \wedge \ldots \wedge e_{l_{i}}, e_{k_1^L} \wedge \ldots \wedge e_{k_{n-i}^L} \rangle_{\wedge^{i}V}=1 .\]
For each $k_j^L$, let $N(k_j^L, L)$ be the unique integer such that $l_{N(k_j^L,L)-1}< k_j^L < l_{N(k_j^L, L)}$. For $L' \in \mathcal L'$, we similarly fix integers $k_1^{L'}, \ldots, k_{i-1}^{L'}$ and define $N(k_j^{L'}, L')$. 

%For $e_{l_1} \wedge \ldots \wedge e_{l_{n-i}} \in \wedge^{n-i}V$, let $\psi_{e_{l_1}\wedge \ldots \wedge e_{l_{n-i}}} \in \wedge^iV$ be of the form $e_{k_1} \wedge \ldots \wedge e_{k_i}$ such that $\langle e_{l_1} \wedge \ldots \wedge e_{l_{n-i}}, \psi_{e_{l_1} \wedge \ldots \wedge e_{l_{n-i}}} \rangle_{\wedge^{n-i}V}=1$. For $e_{l_1} \wedge \ldots \wedge e_{l_{n-i+1}} \in \wedge^{n-i+1}V$, we similarly define $\psi_{e_{l_1} \wedge \ldots \wedge e_{l_{n-i+1}}} \in \wedge^{i-1}V$. 

We have
\begin{align*}
 & \mathrm{Tr}_{i}({}^{\iota}\mathcal D_{n-i-1}^* \circ \alpha) \\
%= & \sum_{\substack{1 \leq k \leq \dim V \\ L=(l_1,\ldots,l_{n-i})\in \mathcal L}} {}^{\iota}\mathcal D_i^* \circ \alpha(1 \otimes x_k \otimes e_{l_1} \wedge \ldots \wedge e_{l_{n-i}})(1 \otimes f_{x_k} \otimes \psi_{e_{l_1} \wedge \ldots \wedge e_{l_{n-i}}}) \\
=& \sum_{\substack{1 \leq r \leq \dim X \\ L=(l_1,\ldots,l_{i})\in \mathcal L}} {}^{\iota}\mathcal D_{n-i-1}^* \circ \alpha(1\otimes x_r \otimes e_{l_1} \wedge \ldots \wedge e_{l_{i}})(1 \otimes f_{x_r} \otimes e_{k_1^{L}} \wedge \ldots \wedge e_{k_{n-i}^{L}}) \\
=& \sum_{\substack{1 \leq r \leq \dim X \\ L=(l_1,\ldots,l_{i})\in \mathcal L}} \alpha(1 \otimes x_r \otimes e_{l_1} \wedge \ldots \wedge e_{l_{i}})( \sum_{j=1}^{n-i}(-1)^{j+1} \widetilde{e}_{k_j} \otimes f_{x_r} \otimes e_{k_1^{L}} \wedge \ldots \wedge \widehat{e}_{k_j^{L}}\wedge \ldots \wedge e_{k_{n-i}^{L}}) \\
=& -\sum_{\substack{1 \leq r \leq \dim X \\ L=(l_1,\ldots,l_{i})\in \mathcal L}} \sum_{j=1}^{n-i}(-1)^{j+1}\alpha(\widetilde{e}_{k_j} \otimes x_k \otimes e_{l_1} \wedge \ldots \wedge e_{l_{i}})( 1 \otimes f_{x_r} \otimes e_{k_1^{L}} \wedge \ldots \wedge \widehat{e}_{k_j^{L}}\wedge \ldots \wedge e_{k_{n-i}^{L}}) \\
%=& -\sum_{\substack{1 \leq r \leq \dim V \\ L=(l_1,\ldots,l_{n-i})\in \mathcal L}}\sum_{j=1}^{i}(-1)^{j+1} \times  \\
 % &    \quad \quad  \quad  \quad \alpha(\widetilde{e}_{k_j} \otimes x_r \otimes e_{l_1} \wedge \ldots \wedge \widehat{e}_{l_{N(k_j, \left\{l_1,\ldots l_{n-i}\right\})}}\wedge \ldots \wedge e_{l_{n-i}}) ( 1 \otimes f_{x_r} \otimes e_{k_1^{L}} \wedge \ldots  \wedge \widehat{e}_{k_j^{L}}\wedge \ldots \wedge e_{k_i^{L}}) \\
 =& -(-1)^{i}\sum_{j=1}^{n-i}\sum_{\substack{1 \leq r \leq \dim X \\ L=(l_1,\ldots,l_{i})\in \mathcal L}}(-1)^{N(k_j, L)+1}  \alpha(\widetilde{e}_{k_j} \otimes x_r \otimes e_{l_1} \wedge \ldots  \wedge e_{l_{i}}) \\
  &    \quad \quad  \quad  \quad ( 1 \otimes f_{x_r} \otimes e^{l_1, \ldots, l_{N(k_j^L,L)-1},  l_{N(k_j^L,L)},\ldots,  l_{i}}_{k_1} \wedge  \ldots \wedge e^{l_1, \ldots, l_{N(k_j^L,L)-1},  l_{N(k_j^L,L)}, \ldots,  l_{i}}_{k_{n-i}}) \\
	 =& -(-1)^{i}\sum_{\substack{1 \leq r \leq \dim X \\ L'=(l_1',\ldots,l'_{i+1})\in \mathcal L'}}(-1)^{j+1}   \alpha(\widetilde{e}_{k_j} \otimes x_r \otimes e_{l_1'} \wedge \ldots \wedge \widehat{e}_{l_{j}'}\wedge \ldots \wedge e_{l_{i+1}'}) ( 1 \otimes f_{x_r} \otimes e_{k_1^{L'}} \wedge  \ldots \wedge e_{k_{n-i-1}^{L'}}) \\
%	=& -(-1)^{n-i}\sum_{\substack{1 \leq r \leq \dim V \\ \\ L'=(l_1',\ldots,l'_{i+1})\in \mathcal L'}}\alpha \circ \mathcal D_{n-i}(1 \otimes x_r \otimes e_{l_1'} \wedge \ldots \wedge e_{l_{n-i+1}'})(1 \otimes f_{x_r} \otimes \psi_{e_{l_1'} \wedge \ldots \wedge e_{l_{n-i+1}'}}) \\
	=& -(-1)^{i}\mathrm{Tr}_{i+1}(\alpha \circ \mathcal D_{i})
\end{align*}

The first equality follows from the fact that 
\begin{align} \label{eqn change basis tr}
 B(x_r \otimes e_{l_1} \wedge \ldots \wedge e_{l_{n-i}})(f_{x_{r'}} \otimes e_{k^L_1}\wedge \ldots \wedge e_{k^L_i} )&=\left\{ \begin{array}{cc} 1 & \mbox{ if $r=r'$ and $\left\{ l_1, \ldots, l_{n-i} \right\}=L$} \\ 0 & \mbox{ otherwise} \end{array} \right. ,
\end{align}
The last equality uses a similar equation as (\ref{eqn change basis tr}). The third equality uses the fact that $\widetilde{e}_l^{*}=-\widetilde{e}_l$ and $\alpha$ is an $\mathbb{H}$-map.
\end{proof}

\begin{lemma} \label{lem tr tansfer E}
With the notations in the proof of Lemma \ref{lem bilinear pair}, 
\[{}^{\iota}\vartheta^{*}_{n-i} \circ {}^{\iota}{\mathcal E}_{n-i-1}^{*}\circ {}^{\iota}\wp_{n-i-1}^{*} \circ  B_{i+1}     = -(-1)^{i} B_{i} \circ  \wp_{i} \circ\mathcal E_{i}\circ \vartheta_{i+1} .\]

\end{lemma}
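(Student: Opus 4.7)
The plan is to verify the identity by evaluating both sides on basis elements and then invoking Lemma \ref{lem for B}. Both sides are linear maps
\[\mathrm{Res}_W X \otimes \wedge^{i+1}V \;\longrightarrow\; (\mathrm{Res}_W \iota(X)^{\bullet} \otimes \wedge^{n-i}V)^{\vee},\]
so it suffices to fix basis elements $x \otimes e_{k_1}\wedge\ldots\wedge e_{k_{i+1}}$ and $f \otimes e_{l_1}\wedge\ldots\wedge e_{l_{n-i}}$, apply each side to the former and evaluate the resulting functional on the latter. The key observation making the two computations line up is that $\mathcal{E}_i$ and ${}^{\iota}\mathcal{E}_{n-i-1}$ act trivially on the $\mathbb{H}$-factor of $1\otimes(\cdots)$, so the maps $\wp_i$ and the section ${}^{\iota}\wp_{n-i-1}^{*}$ play no more than a bookkeeping role.

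First I would compute the right-hand side. Applying $\vartheta_{i+1}$, then $\mathcal E_i$, then $\wp_i$ to $x\otimes e_{k_1}\wedge\ldots\wedge e_{k_{i+1}}$ produces
\[\sum_{j=1}^{i+1}(-1)^{j+1}\widetilde{e}_{k_j}.x\otimes e_{k_1}\wedge\ldots\wedge\widehat{e}_{k_j}\wedge\ldots\wedge e_{k_{i+1}}\in \mathrm{Res}_WX\otimes\wedge^iV,\]
and applying $B_i$ followed by evaluation at $f\otimes e_{l_1}\wedge\ldots\wedge e_{l_{n-i}}$ yields exactly the left side of Lemma \ref{lem for B}.

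Next I would compute the left-hand side. The functional $B_{i+1}(x\otimes e_{k_1}\wedge\ldots\wedge e_{k_{i+1}})\in (\mathrm{Res}_W\iota(X)^{\bullet}\otimes\wedge^{n-i-1}V)^{\vee}$ lifts via ${}^{\iota}\wp_{n-i-1}^{*}$ to some $\phi\in {}^{\iota}P_{n-i-1}^{*}$ with $\phi\circ {}^{\iota}\vartheta_{n-i-1}=B_{i+1}(x\otimes e_{k_1}\wedge\ldots\wedge e_{k_{i+1}})$. Then ${}^{\iota}\vartheta_{n-i}^{*}\circ{}^{\iota}\mathcal{E}_{n-i-1}^{*}(\phi)$ evaluated on $f\otimes e_{l_1}\wedge\ldots\wedge e_{l_{n-i}}$ equals $\phi$ applied to
\[{}^{\iota}\mathcal{E}_{n-i-1}(1\otimes f\otimes e_{l_1}\wedge\ldots\wedge e_{l_{n-i}})=\sum_{r=1}^{n-i}(-1)^{r+1}{}^{\iota}\vartheta_{n-i-1}(\widetilde{e}_{l_r}.f\otimes e_{l_1}\wedge\ldots\wedge\widehat{e}_{l_r}\wedge\ldots\wedge e_{l_{n-i}}).\]
Since this lies in the image of ${}^{\iota}\vartheta_{n-i-1}$, the defining property ${}^{\iota}\vartheta_{n-i-1}^{*}\circ{}^{\iota}\wp_{n-i-1}^{*}=\Id$ lets the section drop out, and I obtain
\[B_{i+1}(x\otimes e_{k_1}\wedge\ldots\wedge e_{k_{i+1}})\Bigl(\sum_{r=1}^{n-i}(-1)^{r+1}\widetilde{e}_{l_r}.f\otimes e_{l_1}\wedge\ldots\wedge\widehat{e}_{l_r}\wedge\ldots\wedge e_{l_{n-i}}\Bigr),\]
which is precisely the right side of Lemma \ref{lem for B}.

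Applying Lemma \ref{lem for B} then equates the two expressions up to the factor $-(-1)^i$, which (since $(-(-1)^i)^2=1$) is exactly the claimed identity. No step should present a real obstacle; the only thing to keep an eye on is that the section ${}^{\iota}\wp_{n-i-1}^{*}$ is a priori non-canonical, but the computation shows it is only ever evaluated on elements in the image of ${}^{\iota}\vartheta_{n-i-1}$, where the section is determined by its definition and hence genuinely harmless.
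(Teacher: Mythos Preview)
Your proposal is correct and follows essentially the same approach as the paper: both evaluate the two sides on basis elements $x\otimes e_{k_1}\wedge\ldots\wedge e_{k_{i+1}}$ and $f\otimes e_{l_1}\wedge\ldots\wedge e_{l_{n-i}}$, and both rely on the fact that ${}^{\iota}\mathcal E_{n-i-1}$ preserves the image of ${}^{\iota}\vartheta$ so that the section ${}^{\iota}\wp_{n-i-1}^{*}$ drops out. The only organizational difference is that you invoke Lemma~\ref{lem for B} explicitly for the passage between the two resulting expressions, whereas the paper's proof carries out that same identity inline (moving $\widetilde e_{l_j}$ across the pairing via $(\widetilde v.f)(x)=-f(\widetilde v.x)$ and then performing the wedge reindexing).
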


\begin{proof}
\begin{align*}
 &({}^{\iota}\vartheta^{*}_{n-i} \circ {}^{\iota}{\mathcal E}_{n-i-1}^{*}\circ {}^{\iota}\wp_{n-i-1}^{*} \circ  B_{i+1}(x \otimes e_{k_1} \wedge \ldots \wedge e_{k_{i+1}}))(f \otimes e_{l_{1}} \wedge \ldots \wedge e_{l_{n-i}}) \\
=&  B_{i+1}(x \otimes e_{k_1} \wedge \ldots \wedge e_{k_{i+1}})(\sum_{j=1}^{n-i}(-1)^{j+1} \widetilde{e}_{l_j}.f \otimes e_{l_1} \wedge \ldots \wedge \widehat{e}_{l_j} \wedge \ldots \wedge e_{l_{n-i}}) \\
=&-\sum_{j=i}^n(-1)^{j+1}B_{i+1}( \widetilde{e}_{l_j}.x \otimes e_{k_1} \wedge \ldots \wedge e_{k_{i+1}}))(f \otimes e_{l_1} \wedge \ldots \wedge \widehat{e}_{l_j} \wedge \ldots \wedge e_{l_{n-i}}) \\
=&-(-1)^i B_{i}( \sum_{j=1}^{i+1}(-1)^{j+1}\widetilde{e}_{k_j}.x \otimes e_{k_1} \wedge \ldots \wedge \widehat{e}_{k_j} \wedge \ldots \wedge e_{k_{i+1}}))(f \otimes e_{l_1} \wedge \ldots \wedge \widehat{e}_{l_j} \wedge \ldots \wedge e_{l_{n-i}}) \\
=& -(-1)^{i} (B_{i} \circ  \wp_{i} \circ\mathcal E_{i}\circ \vartheta_{i+1}(x \otimes e_{k_1} \wedge \ldots \wedge e_{k_{i+1}}))(f \otimes e_{l_{1}} \wedge \ldots \wedge e_{l_{n-i}})
\end{align*}
In the second equality, we used the fact that $(\widetilde{v}_j.f)(x)=f(\iota(\widetilde{v}_j)^{\bullet}.x)=-f(\widetilde{v}_j.x)$ (where $\widetilde{v}_j$ acts on $x$ by the action of $\mathbb{H}$ on $X$). 
\end{proof}

\subsection{Duality for the Yoneda product}

 We retain the notations in Sections \ref{ss bilinear pair} and \ref{ss lemma trace}. Recall that $\mathbb{D}$ is defined in  Definition \ref{def dualizing module}.

\begin{theorem} \label{thm dua YP}
Let $\mathbb{H}$ be the graded affine Hecke algebra associated to a root datum $( R, V, R^{\vee}, V^{\vee}, \Pi)$ and a parameter function $k: \Pi \rightarrow \mathbb{C}$ (Definition \ref{def graded affine}). Let $X$ and $Y$ be finite-dimensional $\mathbb{H}$-modules. Let $n=\dim V$. Then 
\begin{enumerate}
\item $\mathrm{Ext}^n_{\mathbb{H}}(X, \mathbb{D}(X)) \neq 0$;
\item the Yoneda product
\[  \mathscr Y_i: \mathrm{Ext}^{n-i}_{\mathbb{H}}(Y, \mathbb{D}(X))\otimes \mathrm{Ext}^i_{\mathbb{H}}(X, Y) \rightarrow \mathrm{Ext}_{\mathbb{H}}^n (X, \mathbb{D}(X)) \]
is a nondegenerate pairing;
\item there exists a linear functional $\mathscr D_X: \mathrm{Ext}_{\mathbb{H}}^n(X, \mathbb{D}(X)) \rightarrow \mathbb{C}$ such that $\mathscr{D}_X \circ \mathscr Y_i$ agrees with the pairing in Theorem \ref{thm poin dua} via the identification $\mathrm{Ext}^{n-i}_{\mathbb{H}}(Y, \mathbb{D}(X)) \cong \mathrm{Ext}_{\mathbb{H}}^{n-i}(X^*, \iota(Y)^{\bullet})$ using Proposition \ref{prop equiv ext group}.
\end{enumerate}

\end{theorem}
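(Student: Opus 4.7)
Part (1) is immediate from Theorem \ref{thm poin dua}: setting $Y = X$ and $i = 0$ there gives a non-degenerate pairing $\mathrm{Hom}_{\mathbb{H}}(X,X) \times \mathrm{Ext}^n_{\mathbb{H}}(X^*, \iota(X)^\bullet) \to \mathbb{C}$, and since $\mathrm{id}_X \neq 0$ the right-hand factor is non-zero. Combining the isomorphisms of Propositions \ref{prop equiv ext group}(1) and \ref{prop ext im invo} with Lemma \ref{lem two dual theta} then identifies $\mathrm{Ext}^n_{\mathbb{H}}(X^*, \iota(X)^\bullet) \cong \mathrm{Ext}^n_{\mathbb{H}}(X, \mathbb{D}(X))$.

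Next I construct $\mathscr{D}_X$ for (3). Given $\xi \in \mathrm{Ext}^n_{\mathbb{H}}(X, \mathbb{D}(X))$, represent it by $\tilde\xi \in \mathrm{Hom}_{\mathbb{H}}(P_n, \mathbb{D}(X))$. The augmentation $\epsilon : {}^\iota P_0 \twoheadrightarrow \iota(X)^\bullet$ $*$-dualizes (using $\mathbb{D}(X) = \iota(X^\bullet)^* \cong (\iota(X)^\bullet)^*$) to an $\mathbb{H}$-embedding $j := \epsilon^{*} : \mathbb{D}(X) \hookrightarrow {}^\iota P_0^{*}$, and I put
\[
 \mathscr{D}_X(\xi) \;=\; \mathrm{Tr}_n(j \circ \tilde\xi).
\]
For well-definedness, I need $\mathrm{Tr}_n(j \circ \tilde\xi' \circ d_{n-1}) = 0$ for any cochain $\tilde\xi' : P_{n-1} \to \mathbb{D}(X)$. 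By Lemma \ref{lem bilinear pair} this equals a sign times $\mathrm{Tr}_{n-1}({}^\iota d_0^{*} \circ j \circ \tilde\xi')$, which vanishes because $\epsilon \circ {}^\iota d_0 = 0$ forces ${}^\iota d_0^{*} \circ j = 0$.

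For the compatibility identity, represent $\zeta \in \mathrm{Ext}^{n-i}_{\mathbb{H}}(Y, \mathbb{D}(X))$ at the chain level through Proposition \ref{prop equiv ext group}(1) as a cocycle $\tilde\zeta' \in \mathrm{Hom}_{\mathbb{H}}({}^\iota P_{n-i}, Y^{*})$, so that by Proposition \ref{prop duality tr} the Theorem \ref{thm poin dua} pairing of $\eta$ with $\zeta$ equals $\mathrm{Tr}_i((\tilde\zeta')^{*} \circ \tilde\eta)$ (up to a uniform sign), where $\tilde\eta \in \mathrm{Hom}_{\mathbb{H}}(P_i, Y)$ represents $\eta$. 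Lifting $\tilde\eta$ to a chain map $\tilde\eta_{\bullet} : P_{i+\bullet} \to Q_{\bullet}$ over the Koszul resolution $Q_{\bullet} \to Y$ and transporting $\tilde\zeta'$ to its $Q$-level representative $\tilde\zeta : Q_{n-i} \to \mathbb{D}(X)$, the Yoneda product is $\mathscr{Y}_i(\zeta, \eta) = [\tilde\zeta \circ \tilde\eta_{n-i}]$, and the required identity is
\[
 \mathrm{Tr}_n(j \circ \tilde\zeta \circ \tilde\eta_{n-i}) \;=\; \pm\,\mathrm{Tr}_i((\tilde\zeta')^{*} \circ \tilde\eta).
\]
The strategy is to slide the trace from index $n$ down to index $i$ by iterating Lemma \ref{lem bilinear pair} along the chain lift $\tilde\eta_{\bullet}$: at each step the chain-map relations of $\tilde\eta_{\bullet}$ on the $P$-side pair with the $*$-dualized Koszul differentials on the ${}^\iota P^{*}$-side to trade a $P$-boundary for an ${}^\iota P^{*}$-boundary at the cost of an overall sign. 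Once (3) is in place, (2) follows because $\mathscr{D}_X$ is non-zero by (1), so if $\mathscr{Y}_i(\zeta, \eta) = 0$ for all $\eta$ (respectively, all $\zeta$) then the Theorem \ref{thm poin dua} pairing vanishes on one side and non-degeneracy forces $\zeta = 0$ (respectively, $\eta = 0$). The main obstacle will be the sign and chain-level bookkeeping in the iterated sliding step, together with verifying that $\tilde\eta_{\bullet}$ interacts cleanly with the $*$-dualized Koszul differentials in the form required by Lemma \ref{lem bilinear pair}.
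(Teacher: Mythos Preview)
Your treatment of (1), the construction of $\mathscr D_X$, and its well-definedness match the paper's argument, and your plan to prove (3) first and deduce (2) from it is the same logic the paper uses (just in the opposite order of presentation).

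The gap is in the ``sliding'' step for the compatibility identity. Lemma~\ref{lem bilinear pair} applies only to $\mathbb H$-maps $P_k \to {}^{\iota}P_{n-k-1}^{*}$, so to iterate it from degree $n$ down to $i$ you need, at every intermediate $k$, a map $P_k \to {}^{\iota}P_{n-k-1}^{*}$. Your single chain lift $\tilde\eta_\bullet : P_{i+\bullet} \to Q_\bullet$ lands in the Koszul resolution of $Y$, not in ${}^{\iota}P_\bullet^{*}$, so there is nothing to feed into Lemma~\ref{lem bilinear pair} after the first step. What is missing is a \emph{second} lift: maps $G_k : Q_k \to {}^{\iota}P_{n-i-1-k}^{*}$ with ${}^{\iota}d_{n-i-1}^{*} \circ G_0 = (\tilde\zeta')^{*} \circ \epsilon_Q$ and $G_{k-1}\circ d_{Q,k-1} = {}^{\iota}d_{n-i-k-1}^{*}\circ G_k$, so that the composites $G_k \circ \tilde\eta_k : P_{i+k} \to {}^{\iota}P_{n-i-1-k}^{*}$ are the objects on which Lemma~\ref{lem bilinear pair} acts. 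This lift exists (algebraic dualization preserves exactness, and the $Q_k$ are projective), but it is not ``bookkeeping'': it is the substance of the argument. The paper carries out exactly this second lift, packaged differently: its Step~2 builds maps $\widetilde F_k : P_k \to {}^{\iota}P_{n-k-1}^{*}$ inductively via the pullback description ${}^{\iota}Z_k^{*}$ of the $*$-dualized Yoneda extension of $\eta'$; the pullback squares supply both the relation ${}^{\iota}d^{*}\circ\widetilde F_k = (\eta'_{n-k})^{*}\circ\eta_k$ needed for Lemma~\ref{lem bilinear pair} and the data $(\eta_{k+1},(\eta'_{n-k-1})^{*})$ for the next step.

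There is a second, related gap: you invoke two representatives of $\zeta$, namely $\tilde\zeta : Q_{n-i}\to\mathbb D(X)$ and $\tilde\zeta' : {}^{\iota}P_{n-i}\to Y^{*}$, and assume they are compatible at the chain level. Proposition~\ref{prop equiv ext group} only identifies them in cohomology. For your argument to terminate correctly you must show that $G_{n-i-1}\circ d_{Q,n-i-1}$ and $j\circ\tilde\zeta$ agree (up to a coboundary killed by $\mathrm{Tr}_n$); the paper avoids this by working entirely from $\eta'$ and proving that the resulting $F_n$ \emph{is} a representative of the Yoneda product, rather than comparing to an independently chosen one.
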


\begin{proof}
For (1), using Theorem \ref{thm poin dua} and Proposition \ref{prop equiv ext group}, we have 
\[ \mathrm{dim} \mathrm{Ext}^n_{\mathbb{H}}(X, \mathbb{D}(X)) = \mathrm{dim} \mathrm{Ext}^n_{\mathbb{H}}(X^*, \iota(X^{\bullet}))= \dim \mathrm{Hom}(X, X) .
\]
Hence $\mathrm{Ext}^n_{\mathbb{H}}(X, \mathbb{D}(X)) \neq 0$. This shows (1).

We now consider (2). Let $\eta \in \mathrm{ker}d_i^{\vee} \setminus \mathrm{im}d_{i-1}^{\vee}$, which is a representative of a non-zero element in $\mathrm{Ext}_{\mathbb{H}}^i(X,Y)$. Let $P_{\bullet} \rightarrow X$ be a projective resolution of $X$. Then the associated long exact sequence has a commutative diagram with maps $F_k$ of the following form:
\[\xymatrix{ 
  & & P_i \ar[d]^{\eta} \ar[r] & P_{i-1} \ar[d]^{F_{i-1}}\ar[r]& \cdots \cdots \ar[r]  & P_0 \ar[d]^{F_0} \ar[r]& X \ar[d]^{id} \ar[r] &  0 \\
 \mathrm{Yon}(\eta): & 0   \ar[r] & Y \ar[r] & Z_i   \ar[r] & \cdots \cdots  \ar[r]  & Z_1  \ar[r]& X \ar[r] & 0 }
\]
To obtain the theorem, it is equivalent to show that there exists $\eta' \in \ker {}^{\iota}d_{n-i}^{\vee} \setminus \im {}^{\iota}d_{n-i-1}^{\vee}$ such that the Yoneda product of $\eta$ and $\eta'$ is non-zero. For the remaining proof for (2), we shall divide into three steps. \\

\noindent
{\bf Step 1: Construct an element in $\mathrm{Ext}_{\mathbb{H}}^{n-i}(Y, \mathbb{D}(X))$ and the Yoneda product with $\eta$} \\
By Proposition \ref{prop duality tr}, there exists $\eta' \in \ker {}^{\iota}d_{n-i}^{\vee} \setminus \im {}^{\iota}d_{n-i-1}^{\vee}$ such that $\mathrm{Tr}_i((\eta')^* \circ \eta) \neq 0$. Let ${}^{\iota}P_{\bullet} \rightarrow \iota(X)^{\bullet}$ be a projective resolution of $\iota(X)^{\bullet}$. The associated long exact sequence $\mathrm{Yon}(\eta')$ of $\eta'$ is of the following form:
\[\xymatrix{ 
  &0 & \ar[l]\iota(X)^{\bullet}\ar[d]^{id} \ar[l] & {}^{\iota}P_{0} \ar[d]\ar[l]& \cdots \cdots \ar[l]_{{}^{\iota}d_{0}}  & {}^{\iota}P_{n-i-1} \ar[d] \ar[l] & {}^{\iota}P_{n-i}  \ar[d]^{\eta'} \ar[l]_{{}^{\iota}d_{n-i-1}} &  \ldots \ar[l]\\
 \mathrm{Yon}(\eta'): & 0    & \iota(X)^{\bullet} \ar[l] & {}^{\iota}Z_{1}   \ar[l] & \cdots \cdots  \ar[l]_{f_1}  & {}^{\iota}Z_{n-i}  \ar[l]_{f_{n-i-1}}& Y^* \ar[l]_{f_{n-i}} & 0 \ar[l] }
\]
with those ${}^{\iota}Z_k$ and $f_k$ ($k=1, \ldots, n-i$) is defined from the pushout so that we have the following commutative diagram:
\[
\xymatrix{
& {}^{\iota}Z_k  & \mathrm{coker} f_{k+1}  \ar[l]_{p_k}  \\
&   {}^{\iota}P_{k-1}  \ar[u]_{q_k}   & {}^{\iota}P_{k} \ar[l]^{{}^{\iota}d_{k-1}}\ar[u]_{\eta_k' }},
\] 
where $f_{n-i+1}: 0 \rightarrow Y^*$ is defined as the trivial map, and $f_{k}: {}^{\iota}Z_{k+1} \rightarrow {}^{\iota}Z_{k}$ is defined  inductively as the composition of the projection ${}^{\iota}Z_{k+1} \rightarrow \mathrm{coker} f_{k+1}$ and $\mathrm{coker} f_{k+1} \stackrel{p_k}{\rightarrow} {}^{\iota}Z_k$, and $\eta_k'$ is defined as
\[ \eta_{k-1}': {}^{\iota}P_{k-1} \stackrel{q_k}{\rightarrow} {}^{\iota}Z_k \rightarrow \mathrm{coker} f_{k} \]
Here we set ${}^{\iota}Z_{n-i+1}=Y^*$. \\

%, and more explicitly as follows:
%\[  {}^{\iota}Z_{n-i} =\mathrm{coker}( {}^{\iota}P_{n-i} \rightarrow {}^{\iota}P_{n-i-1} \oplus Y^*, \quad p \mapsto (d_{n-i-1}(p),-\eta(p))), \]
%and $f_{n-i}: Y \rightarrow Z_{n-i}$ is from the natural projection from $Y$ to the second factor of ${}^{\iota}P_{n-i-1} \oplus Y$. And define

%\[ {}^{\iota}Z_k=\mathrm{coker}( {}^{\iota}P_k \rightarrow {}^{\iota}P_{k-1} \oplus \mathrm{coker}f_{k}, \quad p \mapsto ({}^{\iota}d_{k-1}(p), -\gamma_k(p)) \] with $\gamma_k$ is the composition of the map coming from the projection of ${}^{\iota}P_k$ to the first factor of ${}^{\iota}P_k \oplus \mathrm{coker}f_{k+1}$ and the projection map from ${}^{\iota}Z_{k+1}$ to $\mathrm{coker}(f_{k+1})$. 

Since $*$ is a contravariant exact functor, we then have the following exact sequence:
\begin{align} \label{eqn dual sequnece}
\xymatrix{ 
 & 0   \ar[r]  & \mathbb{D}(X)  \ar[r] & {}^{\iota}Z_{1}^*   \ar[r]^{f_{1}^*} & \cdots \cdots  \ar[r]  & {}^{\iota}Z_{n-i}^*  \ar[r]^{f_{n-i}^*}& Y \ar[r] & 0  }
\end{align}
Furthermore, ${}^{\iota}Z^*_{k}$ ($k=0,\ldots, n-i$) is isomorphic to the pull-back of ${}^{\iota}d_{k-1}^*: {}^{\iota}P_{k-1}^* \rightarrow {}^{\iota}P_{k}^*$ and $(\eta')^*: Y^* \rightarrow {}^{\iota}P_{k}^*$. In particular, we have commutative diagram of the following form:
\[
\xymatrix{
& {}^{\iota}Z_k^* \ar[d]^{q_k^*} \ar[r]^{p_k^*} &  \ker f_{k+1}^* \ar[d]^{(\eta'_k)^*} \\
&   {}^{\iota}P_{k-1}^*  \ar[r]^{{}^{\iota}d_{k-1}^*}  & {}^{\iota}P_{k}^*},
\] 
where $(\eta_k')^*$, $p_k^*$, $q_k^*$ and $f_k^*$ are the corresponding dual maps of $\eta_k'$, $p_k$, $q_k$ and $f_k$ respectively. Here we also identify naturally $\ker f_{k+1}^*$ with $(\mathrm{coker} f_{k+1})^*$.
%There is a natural isomorphism between ${}^{\iota}Z_{k}^*$ and $\overline{Z}_k$. This induces an equivalence of two long exact sequences:
%\[\xymatrix{ 
% & 0   \ar[r]  & \mathbb{D}(X)  \ar[r] \ar[d] & {}^{\iota}Z_{1}^*   \ar[r] \ar[d]^{\cong} & \cdots \cdots  \ar[r]  & {}^{\iota}Z_{n-i}^* \ar[d]^{\cong} \ar[r]& Y \ar[r] \ar[d]^{} & 0  \\
% & 0   \ar[r]  & \mathbb{D}(X)  \ar[r] & \overline{Z}_1   \ar[r] & \cdots \cdots  \ar[r]  & \overline{Z}_{n-i}  \ar[r]& Y \ar[r] & 0 }
%\]

From above, the Yoneda product of the associated long exact sequences of $(\eta')^*$ and $\eta$ is equivalent to the exact sequence of the  following  form:
\[\xymatrix{ 
S: & 0   \ar[r]  & \mathbb{D}(X)  \ar[r]  & {}^{\iota}Z_{1}^*   \ar[r]  & \cdots \cdots  \ar[r]  & {}^{\iota}Z_{n-i}^*  \ar[r]& Z_i \ar[r]  &  \cdots \cdots  \ar[r]  & Z_1\ar[r]& X \ar[r] & 0 
}
\]

In order to show the Yoneda product of $(\eta')^*$ and $\eta$ gives rise a non-zero element in $\mathrm{Ext}^n_{\mathbb{H}}(X, \mathbb{D}(X))$, we have to construct a morphism from the sequence $P_{\bullet} \rightarrow X \rightarrow 0$ to the long exact sequence $S$: 
\[\xymatrix{ 
 & 0 \ar[r]& P_n \ar@{.>}[d] \ar[r] & P_{n-1} \ar@{.>}[d] \ar[r] & \cdots \cdots\ar[r] & P_i \ar@{.>}[d] \ar[r] & P_{i-1} \ar[d]^{F_{i-1}}\ar[r]& \cdots \cdots \ar[r]  & P_0 \ar[d]^{F_0} \ar[r]& X \ar[d]^{id} \ar[r] &  0 \\
S: & 0   \ar[r]  & \mathbb{D}(X)  \ar[r]  & {}^{\iota}Z_{1}^*   \ar[r]  & \cdots \cdots  \ar[r]  & {}^{\iota}Z_{n-i}^*  \ar[r]& Z_i \ar[r]  &  \cdots \cdots  \ar[r]  & Z_1\ar[r]& X \ar[r] & 0 
}
\]
Set ${}^{\iota}Z_0^*=\ker f_1^* \cong \mathbb{D}(X)$ for convenience. We next construct maps $F_k: P_k \rightarrow {}^{\iota}Z^*_{n-k}$ ($k=i, \ldots, n$) so that the above diagram is commutative and carries information about $\mathrm{Tr}$ for those $F_k$. Using a property of $\mathrm{Tr}_n$ for $F_n$ , we shall conclude $S=\mathrm{Yon}_{X, \mathbb{D}(X)}^n(F_n)$ is a non-zero element.  \\

\noindent
{\bf Step 2: Construct maps $F_k: P_k \rightarrow {}^{\iota}Z_{n-k}^*$ } 

Consider the following commutative diagram:
\[\xymatrix{ 
  &   P_{i+1} \ar[r]^{d_i}  &  P_i \ar@{.>}[d]^{\exists \widetilde{F}_i} \ar[r]^{\eta} &   Y\ar[d]^{(\eta')^*}  \\
 &                    &   \ar[r]^{{}^{\iota}d_{n-i-1}^*}     {}^{\iota}P_{n-i-1}^*           &  {}^{\iota}P_{n-i}^*   }
\]
We have 
\begin{enumerate}
\item $\im (\eta')^* \subset \im {}^{\iota}d_{n-i-1}^*$ because $\ker{}^{\iota}d_{n-i}^* =\im{}^{\iota}d_{n-i-1}^*$ and $(\eta')^* \in \ker {}^{\iota}d_{n-i}^{\vee, *}$.
\item By (1) and $P_i$ being projective, there exists a map $\widetilde{F}_i: P_i \rightarrow  {}^{\iota}P_{n-i-1}^* $ such that ${}^{\iota}d_{n-i-1}^* \circ \widetilde{F}_i=(\eta')^* \circ \eta$. 
\item By the universal property of pullback, there exists $F_i: P_i \rightarrow  {}^{\iota}Z_{n-i}^*$ with the following commutative diagram:
\[
\xymatrix{
P_i \ar@/_/[ddr]_{\widetilde{F}_i} \ar@/^/[drr]^{\eta} \ar@{.>}[dr]|-{\exists F_i}\\
& {}^{\iota}Z_{n-i}^* \ar[d]^{q_{n-i}^*} \ar[r]_{p_{n-i}^*} & Y\ar[d]^{(\eta')^*} \\
&   {}^{\iota}P_{n-i-1}^*  \ar[r]^{{}^{\iota}d_{n-i-1}^*}  & {}^{\iota}P_{n-i}^*}
\]
\item We have $ \im (F_i \circ d_i)   \subset \ker f_{n-i}^*$ because $(p_i^*\circ F_i) \circ d_i=\eta \circ d_i=0$, where the last equality follows from $\eta \in \ker d_{i}^{\vee}$. 

\item By (4) and $P_{i+1}$ being projective, there exists a map $\eta_{i+1}: P_{i+1} \rightarrow \ker f_{n-i}^*$ such that the following diagram is commutative:
\[\xymatrix{
P_{i+1} \ar@{.>}[r]^{\exists \eta_{i+1}} \ar[rdd]_{\widetilde{F}_i \circ d_i} \ar[rd]^{F_i \circ d_i} & \ker f_{n-i}^*  \ar[d]^{I_{n-i}} \\
&  {}^{\iota}Z_{n-i}^* \ar[d]^{q_{n-i}^*} \\
& {}^{\iota}P_{n-i-1}^*} ,
\]
where $I_{n-i}$ is the natural inclusion map. By definition, $q_{n-i}^* \circ I_{n-i}=(\eta_{n-i-1}')^*$.
\item $\mathrm{Tr}_{i+1}((\eta_{n-i-1}')^* \circ \eta_{i+1})=\mathrm{Tr}_{i+1}(\widetilde{F}_i \circ d_i)= \mathrm{Tr}_i( {}^{\iota}d_{n-i-1}^{*} \circ \widetilde{F}_i )=\mathrm{Tr}_i((\eta')^* \circ \eta) \neq 0  $, where the first equality follows from (5) and the second equality follows from Lemma \ref{lem bilinear pair}.
\item $\eta_{i+1} \in \ker d_{i+1}^{\vee}$ because $I_{n-i} \circ \eta_{i+1} \circ d_{i+1}=F_i \circ d_i \circ d_{i+1} =0$ and $ I_{n-i}$ is injective. Moreover, by definition, $(\eta_{n-i-1}')^* \in \ker {}^{\iota}d_{n-i-1}^{\vee, *}$ because ${}^{\iota}d_{n-i-1}^{ *}\circ (\eta_{n-i-1}')^* =(\eta')^*\circ p_{n-i}^*\circ I_{n-i} $ and $p_{n-i}^*\circ I_{n-i}=0$ (by definition of $f_{n-i}^*$ for the last equation). (By abuse of notation, here the map $d_{i+1}^{\vee}$ denotes for a map from $\mathrm{Hom}_{\mathbb{H}}(\mathbb{H} \otimes_{\mathbb{C}[W]}(\mathrm{Res}_WX \otimes \wedge^{i+1} V), \ker f_{n-i}^*   )$ to $\mathrm{Hom}_{\mathbb{H}}(\mathbb{H} \otimes_{\mathbb{C}[W]}(\mathrm{Res}_WX \otimes \wedge^{i} V), \ker f_{n-i}^*)$.)
\item Now $\ker f_{n-i}^*$ replaces the role of $Y$; $(\eta_{n-i-1}')^*$ and $\eta_{i+1}$ replace the roles of $(\eta')^*$ and $\eta$ respectively; $P_{i+1}$ and ${}^{\iota}P_{n-i-1}$ replaces the roles of $P_i$ and ${}^{\iota}P_{n-i}$ respectively. With (6) and (7), we can repeat the argument from (1) to (7). Inductively, we obtain maps, for $k=i, \ldots, n-1$, $\eta_{k+1}: P_{k+1} \rightarrow \mathrm{ker}f_{n-k}^*$ and $\widetilde{F}_k: P_k \rightarrow {}^{\iota}P_{n-k-1}^*$ and $F_k: P_k \rightarrow {}^{\iota}Z_{n-k}^*$. Furthermore, we have $\mathrm{Tr}_k((\eta_{n-k-1}')^* \circ \eta_{k+1}) \neq 0$. 
\item From the map $\eta_n: P_n \rightarrow \ker f_1^*$ and $\ker f_1^* \cong \mathbb{D}(X)$, we obtain a map $F_n: P_n \rightarrow \mathbb{D}(X)$. 
\item From our construction, $F_i$ completes the commutative diagram. \\

\end{enumerate}

\noindent
{\bf Step 3: Show that $S=\mathrm{Yon}_{X, \mathbb{D}(X)}^n(F_n) \neq 0$ }

It is equivalent to show $\mathrm{Yon}_{X, \ker d_0^*}^n(\eta_n)\neq 0$. Suppose 
\[\eta_n \in \mathrm{im} (d_{n-1}^{\vee}: \mathrm{Hom}_{\mathbb{H}}(\mathbb{H} \otimes_{\mathbb{C}[W]}(\mathrm{Res}_WX \otimes \wedge^{n-1} V), \ker f_1^*) \rightarrow \mathrm{Hom}_{\mathbb{H}}(\mathbb{H} \otimes_{\mathbb{C}[W]}(\mathrm{Res}_WX \otimes \wedge^{n} V), \ker f_1^*) )
\]
Then $\eta_n=d^{\vee}_{n-1} (\overline{\eta})$ for some $\overline{\eta} \in  \mathrm{Hom}_{\mathbb{H}}(\mathbb{H} \otimes_{\mathbb{C}[W]}(\mathrm{Res}_WX \otimes \wedge^{n-1} V), \ker d_0^*) $. Then $\mathrm{Tr}_n( (\eta_0')^*\circ d_{n-1}^{\vee}\overline{\eta})= \mathrm{Tr}_{n-1}(({}^{\iota}d_0^{\vee,*}( (\eta_0')^*)\circ \overline{\eta} )=0$ by equation (\ref{eqn adjoint for trac}) and $(\eta_0')^* \in  \ker f_1^{*}$, giving a contradiction to our construction (see (6) and (8) in step 2). This implies that $\eta_n \notin \im d_{n-1}^{\vee}$ and hence $\mathrm{Yon}_{X, \ker d_0^*}^n(\eta_n)\neq 0$ as desired. This completes the proof for (2).

We now consider (3).  Via the exact sequence in (\ref{eqn dual sequnece}), there is a natural isomorphism $f_0^*: \mathbb{D}(X) \rightarrow \mathrm{ker} f_1^*$. Define the map $\mathscr D_X: \mathrm{Ext}_{\mathbb{H}}^n(X, \mathbb{D}(X)) \rightarrow \mathbb{C}$ as follows: for $\eta_n \in \mathrm{Hom}_{\mathbb{H}}(\mathbb{H} \otimes_{\mathbb{C}[W]}(\mathrm{Res}_WX \otimes \wedge^n V), \mathbb{D}(X)) $, define
\[ \mathscr D_X( \eta_n)  = \mathrm{Tr}_n( {}^{\iota}d_{-1}^{\vee,*} \circ \eta_n ), 
\]
where ${}^{\iota}d_{-1}^{\vee, *}: \mathbb{D}(X) \rightarrow {}^{\iota}P_{0}^*$ is the natural map induced from the surjective map ${}^{\iota}P_0 \rightarrow \iota(X^{\bullet})$. 
Note that
\begin{align} \label{eqn alt form DX}
 \mathscr D_X( \eta_n)  = \mathrm{Tr}_n((\eta_0')^*\circ  f_0^*\circ \eta_n ) .
\end{align}
by using a commutative diagram. Then using the argument given in the first paragraph of Step 3 above, we can show that $\mathscr D_X( \eta)  = \mathrm{Tr}_n((\eta_0')^*\circ  f_0^*\circ \eta)=0$ for $\eta \in \mathrm{im}\ d_{n-1}^{\vee}$. Recall that
\[  \mathrm{Ext}^n_{\mathbb{H}}(X, \mathbb{D}(X)) \cong \frac{ \mathrm{Hom}_{\mathbb{H}}(\mathbb{H} \otimes_{\mathbb{C}[W]}(\mathrm{Res}_WX \otimes \wedge^n V), \mathbb{D}(X)) }{\mathrm{im}\ d_{n-1}^{\vee} }.
\]
Hence $\mathscr D_X$ descents to a map $\mathrm{Ext}_{\mathbb{H}}^n(X, \mathbb{D}(X)) \rightarrow \mathbb{C}$ (i.e. independent of choice of a representative). It remains to see $\mathscr D_X \circ \mathscr Y_i$ coincides with the pairing in Theorem \ref{thm poin dua}. To this end, using (\ref{eqn alt form DX}) and Step 2 (6) and (8) above, we have 
\[ \mathrm{Tr}_i((\eta')^* \circ \eta)= \mathrm{Tr}_n((\eta_0')^*\circ  f_0^*\circ \eta_n )=\mathscr D_X( \eta_n) ,
\]
where $\eta_n =\mathscr{Y}_i(\eta', \eta)$. (Note that in our construction for (2), we assume $\eta'$ is specially picked in Step 1. In fact, all the construction can still work for arbitrary $\eta'$ and the only thing we do not have is $\mathrm{Tr}_i((\eta')^* \circ \eta)\neq 0$.) From our construction in Section \ref{ss bilinear pair}, we have $\mathrm{Tr}_i$ is a non-zero scalar multiple of the bilinear form in Theorem \ref{thm poin dua}. This proves (3).
\end{proof}

%\begin{lemma}
%Define a map $\Psi: \mathrm{Ext}^n_{\mathbb{H}}(X, \mathbb{D}(X)) \rightarrow \mathbb{C}$ as follows. Let 
%\[ \eta \in \mathrm{Hom}_{\mathbb{H}}(\mathbb{H} \otimes_{\mathbb{C}[W]}(\mathrm{Res}_W X \otimes \wedge^n V), \mathbb{D}(X)) .\] 
%Fix a non-zero element in $\omega \in \wedge^n V$. Then it determines a linear isomorphism
%\[   \tau: \mathbb{D}(X) \rightarrow \mathrm{Res}_W X \otimes \wedge^n V\]
%given by $\tau(x) = x \otimes \omega$. Here we use a natural linear ismorphism between $\mathbb{D}(X)$ and $X$ and so regard $x$ as the corresponding elements under such isomorphism.

 %Recall that we have the embedding $ \mathrm{Res}_W X \otimes \wedge^n V \rightarrow  \mathrm{Res}_W X \otimes \wedge^n V $ . 
%\end{lemma}

\begin{example} \label{exam ext indecom}
\begin{enumerate}
\item[(1)] When $\mathbb{H}=S(V)$, $\mathrm{Ext}^*_{S(V)}(\mathbb{C}, \mathbb{C}) \cong \wedge^*V$, where $\mathbb{C}$ is regarded as the trivial $S(V)$-module i.e. $v. x=0$ for all $x \in \mathbb{C}$. The Yoneda product agrees with the natural product structure on $\wedge^*V$. 
%\item[(1)] We consider $\mathbb{H}=S(V)$ with $V=\mathbb{C}^2$. We denote $\mathbb{C}$ for the trivial module.  Consider the extensions:
%\[  0 \rightarrow \mathbb{C} \rightarrow X_0 \rightarrow \mathbb{C} \rightarrow 0, \quad 0 \rightarrow \mathbb{C} \rightarrow X_1 \rightarrow \mathbb{C} \rightarrow 0 \]
%where $X_0=\mathbb{C}_2$ with a basis $a_1, a_2$ and $\pi_{X_0}(e_1)a_1=a_2 $, $\pi_{X_0}(e_2)a_1=0$ while $X_1= \mathbb{C}_2$ with a basis $b_1, b_2$ and $\pi_{X_1}(e_2)b_1=b_2$ and and $\pi_{X_0}(e_1)b_1=0$. The Yoneda product of the two extensions is non-zero.
\item[(2)] Let $R$ be of type $G_2$, let $V$ be the space spanned by $R$, and let $k \equiv 1$. Let $\alpha, \beta$ be the simple roots of $R$ with $\langle \alpha, \beta^{\vee} \rangle =-1$ and $\langle \beta, \alpha^{\vee} \rangle=-3$. We consider modules of the central character $\alpha^{\vee}+\beta^{\vee}$, in which the tempered modules correspond to the subregular nilpotent orbit under the Kazhdan-Lusztig classification. For such central character, there exists a unique irreducible discrete series, denoted $DS$, which contains the sign representation as a $\mathbb{C}[W]$-module, and there exists a unique simple module, denoted by $Z$ which is invariant under the Iwahori-Matsumoto involution $\iota$. We have 
\[ 
\mathrm{Ext}^i_{\mathbb{H}}(DS, Z) \cong \mathrm{Ext}^i_{\mathbb{H}}(Z, DS)\cong  \left\{ \begin{array}{cc} \mathbb{C} & \mbox{ if $i=1$ } \\ 0 & \mbox{ otherwise } \end{array}  \right.
\]
(see \cite[Section 8.2]{Ch}). Then we have non-trivial extensions of $DS$ by $Z$ and $Z$ by $DS$ respectively as follows:
\[ 0\rightarrow DS \rightarrow X_1 \rightarrow Z \rightarrow 0, \quad 0 \rightarrow Z \rightarrow X_2 \rightarrow DS \rightarrow 0 .\]
By considering the Yoneda product and Theorem \ref{thm dua YP}, we obtain a long exact sequence:
\[ 0 \rightarrow Z \rightarrow X_2 \rightarrow X_1 \rightarrow Z \rightarrow 0 ,\]
corresponding to a non-zero element in $\mathrm{Ext}^2_{\mathbb{H}}(Z, Z)$. (However, if we reverse the order of multiplication in the Yoneda product, then the long exact sequence
\[  0 \rightarrow DS \rightarrow X_1 \rightarrow X_2 \rightarrow DS \rightarrow 0 \]
corresponds to the zero element in $\mathrm{Ext}_{\mathbb{H}}^2(DS, DS)$. )

\end{enumerate}
\end{example}

Here is an application of Theorem \ref{thm dua YP}, which cannot be merely deduced from Theorem \ref{thm poin dua}.

\begin{corollary} 
In the notation of Example \ref{exam ext indecom}, there is no indecomposable module of length $3$ whose composition factors are two copies of $Z$ and one copy of $DS$.
\end{corollary}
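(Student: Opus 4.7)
The plan is to assume for contradiction that such an indecomposable $M$ exists and then perform a case analysis on the pair of invariants $a := \dim \mathrm{Hom}_{\mathbb{H}}(DS, M)$ and $b := \dim \mathrm{Hom}_{\mathbb{H}}(M, DS)$, each of which lies in $\{0, 1\}$ because $DS$ occurs with multiplicity one in $M$. The key inputs are the $\mathrm{Ext}$-data from \exref{exam ext indecom}, and crucially the vanishing $\mathrm{Ext}^1_{\mathbb{H}}(Z, Z) = 0$, which is available from the explicit computation in \cite[Section 8.2]{Ch} (and can alternatively be recovered from the Poincar\'e-type duality of \thref{poin dua} combined with the Euler-Poincar\'e tools of Section \ref{s epp}).

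If $(a, b) = (1, 1)$, the composition $DS \hookrightarrow M \twoheadrightarrow DS$ is a scalar by Schur's lemma; if non-zero then $DS$ splits off as a direct summand of $M$, and if zero then the image of $DS$ lies inside the kernel $\ker(M \twoheadrightarrow DS)$, which has composition factors $\{Z, Z\}$ and therefore cannot contain a copy of $DS$. Both alternatives contradict our hypotheses.

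If $(a, b) \in \{(1, 0), (0, 1)\}$ (the two possibilities being symmetric), then the vanishing $\mathrm{Ext}^1_{\mathbb{H}}(Z, Z) = 0$ forces $M/DS \cong Z \oplus Z$ (respectively $\ker(M \to DS) \cong Z \oplus Z$). The module $M$ is then classified by an element of $\mathrm{Ext}^1_{\mathbb{H}}(Z \oplus Z, DS) \cong \mathbb{C}^2$ (respectively $\mathrm{Ext}^1_{\mathbb{H}}(DS, Z \oplus Z) \cong \mathbb{C}^2$), and $\mathrm{Aut}_{\mathbb{H}}(Z \oplus Z) = GL_2(\mathbb{C})$ acts transitively on non-zero vectors. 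Hence up to isomorphism $M$ is one of $X_1 \oplus Z$, $X_2 \oplus Z$, or $DS \oplus Z \oplus Z$, each of which is decomposable.

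Finally, in the case $(a, b) = (0, 0)$, both $\mathrm{soc}(M)$ and $\mathrm{top}(M)$ consist only of $Z$'s; ruling out $\mathrm{soc}(M) = Z \oplus Z$ (which would force $M/\mathrm{soc}(M) \cong DS$ and put $DS$ in $\mathrm{top}(M)$) forces $\mathrm{soc}(M) = Z$ and similarly $\mathrm{top}(M) = Z$. Indecomposability then makes the socle land inside $\mathrm{rad}(M)$, and since $\mathrm{rad}(M)$ has composition factors $\{Z, DS\}$ with socle $Z$, we must have $\mathrm{rad}(M) \cong X_2$. Thus $M$ is classified by a class in $\mathrm{Ext}^1_{\mathbb{H}}(Z, X_2)$. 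Applying $\mathrm{Hom}_{\mathbb{H}}(Z, -)$ to the defining sequence $0 \to Z \to X_2 \to DS \to 0$ produces a long exact sequence in which the connecting homomorphism $\mathrm{Ext}^1_{\mathbb{H}}(Z, DS) \to \mathrm{Ext}^2_{\mathbb{H}}(Z, Z)$ is Yoneda multiplication by $[X_2] \in \mathrm{Ext}^1_{\mathbb{H}}(DS, Z)$; this map is injective by the non-vanishing Yoneda product recorded in \exref{exam ext indecom}. Combined with $\mathrm{Ext}^1_{\mathbb{H}}(Z, Z) = 0$, this yields $\mathrm{Ext}^1_{\mathbb{H}}(Z, X_2) = 0$, so $M \cong Z \oplus X_2$ is decomposable, completing the contradiction. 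The main obstacle is verifying the input $\mathrm{Ext}^1_{\mathbb{H}}(Z, Z) = 0$, since otherwise a non-trivial self-extension $Z_e$ of $Z$ by $Z$ could enter the $(a, b) = (1, 0)$ analysis and give rise to a genuine indecomposable of the prescribed shape.
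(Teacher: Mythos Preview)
Your proof is correct and, in its decisive case $(a,b)=(0,0)$, coincides exactly with the paper's argument: both identify $\mathrm{rad}(M)\cong X_2$ and then apply $\mathrm{Hom}_{\mathbb{H}}(Z,-)$ to the defining sequence of $X_2$, using that the connecting homomorphism is Yoneda multiplication by $[X_2]$ and is therefore injective by \thref{dua YP} (as recorded in \exref{exam ext indecom}), to conclude $\mathrm{Ext}^1_{\mathbb{H}}(Z,X_2)=0$.

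The only difference is organizational. The paper sorts the cases by the shape of the radical filtration of $M$ and dismisses all but the one above as ``relatively easier,'' whereas you sort by the pair $(a,b)=(\dim\mathrm{Hom}_{\mathbb{H}}(DS,M),\dim\mathrm{Hom}_{\mathbb{H}}(M,DS))$ and treat the remaining cases explicitly, invoking $\mathrm{Ext}^1_{\mathbb{H}}(Z,Z)=0$ together with the transitive $GL_2(\mathbb{C})$-action on $\mathrm{Ext}^1_{\mathbb{H}}(Z\oplus Z,DS)$. Your framing has the virtue of making transparent exactly where $\mathrm{Ext}^1_{\mathbb{H}}(Z,Z)=0$ enters; note that the paper's own argument also silently needs this input, since from the long exact sequence and the injectivity of $\partial$ one only gets $\mathrm{Ext}^1_{\mathbb{H}}(Z,\mathrm{rad}^1)\cong\mathrm{Ext}^1_{\mathbb{H}}(Z,Z)$, not vanishing outright. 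Both proofs therefore rest on the same external computation from \cite[Section 8.2]{Ch}.
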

\begin{proof}
 To show this, suppose there exists such module, say $M$ of length $3$ with those composition factors. Then one may consider all the possible radical filtration for $M$ to obtain a contradiction. Here we only consider that $M$ has radical filtration $\mathrm{rad}^i$: $\mathrm{rad}^0/\mathrm{rad}^1\cong Z$, $\mathrm{rad}^1/\mathrm{rad}^2\cong DS$, $\mathrm{rad}^2\cong Z$ and we shall show such $M$ is impossible to exist. (Other cases are relatively easier.)  We have the exact sequence:
\[  0 \rightarrow Z \rightarrow \mathrm{rad}^1 \rightarrow DS \rightarrow 0.
\]
Applying the $\mathrm{Hom}_{\mathbb{H}}(Z,.)$-functor, we have the following long exact sequence:
\[ \ldots \rightarrow \mathrm{Ext}^1_{\mathbb{H}}(Z, Z) \rightarrow \mathrm{Ext}^1_{\mathbb{H}}(Z,\mathrm{rad}^1) \rightarrow \mathrm{Ext}^1_{\mathbb{H}}(Z,DS)\stackrel{\partial}{\rightarrow} \mathrm{Ext}^2_{\mathbb{H}}(Z,Z ) \rightarrow \ldots \]
The connecting homomorphism $\partial$ is the Yoneda product $\mathrm{Ext}^1_{\mathbb{H}}(DS,Z) \otimes $ \cite[Ch III Theorem 9.1]{Ma} and hence according to the discussion in Example \ref{exam ext indecom} (which uses Theorem \ref{thm dua YP}), $\partial$ is injective. Thus $\mathrm{Ext}^1_{\mathbb{H}}(Z,\mathrm{rad}^1)=0$, which contradicts to the existence of such $M$.
\end{proof}

\section{Ind-Res resolution and Aubert-type involution} \label{s kato resolution}

Motivated from studies in \cite[Section 10]{Pr}, \cite[Section III.3, Section IV.4]{SS} and \cite{Ka0}, it is natural to relate the dual module $\mathbb{D}(X)$ (Definition \ref{def dualizing module}) with an involution arising from Ind-Res functors. Such involution is an analogue of an involution for $p$-adic groups studied in \cite{Au} by Aubert. The Ind-Res functors also give rise a resolution which will be used to compute extensions of discrete series in next section. 

The approach in this section is very close to the one for affine Hecke algebras in \cite{Ka0} by Kato.

\subsection{Coxeter complex} \label{ss cox compl}
In this section, we review a complex on a sphere which arises from the hyperplane arrangement of the reflection representation. 

Let $V'$ be the $W$-subspace of $V$ spanned by $\alpha \in \Pi$ so that $V'$ is the reflection representation of $W$. Let ${\mathcal FC}$ be the closed fundamental chamber of the action of $W$ on $V'$ i.e.
\[   {\mathcal FC}= \left\{ v \in V' : \langle v, \alpha^{\vee} \rangle \geq 0 \mbox{ for all $\alpha \in \Pi$ } \right\}. \]
Let $m=|\Pi|$. We project naturally the $\mathcal FC$ to the unit $(m-1)$-dimensional sphere in $V'$, and for $\alpha \in \Pi$, we denote $v_{\alpha} \in V'$ to be the image of the projection of the line $\bigcap_{\beta \in \Pi \setminus \left\{ \alpha \right\}} H_{\beta}$,  where $H_{\beta}$ is the hyperplane perpendicular to $\beta$. Write $\Pi=\left\{ \alpha_1, \ldots, \alpha_{m} \right\}$, and by which, we also have fixed an ordering of simple roots. Set $v_i=v_{\alpha_i}$. Then we also have an ordering on vertices $v_{1}, \ldots, v_{m}$ and hence obtain an ordered $(m-1)$-simplex $[v_{1}, \ldots, v_{m}]$ from the projection of $\mathcal{FC}$. By taking $W$-actions, we obtain other $(m-1)$-simplexes with ordered vertices $[w(v_1), \ldots, w(v_{m})]$ ($w \in W$) on the unit $(m-1)$-sphere. Those $(m-1)$-simplexes form an ordered simplicial complex on the $(m-1)$-sphere. 
%(Note that the well-definedness of the ordered simplicial complex follows from the fact that if $wv_1=v_2$ ($v_1, v_2 \in \mathcal{FC}$ and $w \in W$), then $v_1=v_2$. See \cite[Theorem 1.12]{Hu} for the latter fact.)

We need some more notations for next subsection. For each $J \subset \Pi$, write $\Pi \setminus J=\left\{ \alpha_{i_1^J}, \ldots, \alpha_{i_k^J}\right\}$ with $i_1^J < \ldots < i_k^J$. Let $\Delta^J=[v_{i_1^J}, \ldots, v_{i_k^J}]$ be an $(m-|J|-1)$-simplex. For $J \subset J' \subset \Pi$ with $|J'|=|J|+1$, let $j$ be the unique index (depending on $J$ and $J'$) such that $\alpha_{i_j^{J'}} \in \left\{ \alpha_{i_1^J}, \ldots, \alpha_{i_{m-|J|}^J} \right\} \setminus \left\{ \alpha_{i_1^{J'}}, \ldots, \alpha_{i_{m-|J'|}^{J'}} \right\} $. We then set $\epsilon_J^{J'}= (-1)^{j+1}$.

Let $C_k^{\Delta}(X)$ be the free abelian group of the $k$-simplexes $w(\Delta^J):=[w(v_{i_1^J}), \ldots, w(v_{i_{k+1}^J})]$ for all $J \subset \Pi$ with $|J|=m-k-1$ and all $w\in W$.

\subsection{Notation for parabolic subalgebras of $\mathbb{H}$}
\begin{notation}
For any subset $J$ of $\Pi$, define $V_J$ to be the complex subspace of $V$ spanned by vectors in $J$ and define $V_J^{\vee}$ be the dual space of $V_J$ lying in $V^{\vee}$. Let $R_J = V_J \cap R$ and let $R_J^{\vee}=V_J^{\vee} \cap R^{\vee}$. Let $W_J$ be the subgroup of $W$ generated by the elements $s_{\alpha}$ for $\alpha \in J$. 
For $J \subset \Pi$, let $W_J$ be the subgroup of $W$ generated by all $s_{\alpha}$ with $\alpha \in J$. Let $w_{0, J}$ be the longest element in $W_J$. Let $W^J$ be the set of minimal representatives in the cosets in $W/W_J$.

For $J \subset \Pi$, let $\mathbb{H}_J$ be the subalgebra of $\mathbb{H}$ generated by all $v \in V$ and $t_w$ ($w \in W_J$). 
Note $\mathbb{H}_J$ is the graded affine Hecke algebra associated to the root datum $( R_J, V,  R_J^{\vee}, V^{\vee}, J)$.

%Let $W^J$ be the set of minimal representatives of $W$ in the cosets $W/W_J$. 
\end{notation}

\subsection{Ind-Res resolution} \label{ss ind res resol} We keep using the notation in Section \ref{ss cox compl}.
Let $X$ be an $\mathbb{H}$-module. For $J \subset \Pi$, let 
\[  C_J(X) = \mathrm{Ind}_{\mathbb{H}_J}^{\mathbb{H}} \mathrm{Res}_{\mathbb{H}_J} X := \mathbb{H} \otimes_{\mathbb{H}_J} (\mathrm{Res}_{\mathbb{H}_J}X) .
\]
Let 
\[  C_i(X) = \bigoplus_{|J|=i} C_J(X) .
\]
 For $J \subset J' \subset \Pi$, define the natural map
\[\pi_J^{J'}: \mathrm{Ind}_{\mathbb{H}_J}^{\mathbb{H}} \mathrm{Res}_{\mathbb{H}_J} X  \rightarrow \mathrm{Ind}_{\mathbb{H}_{J'}}^{\mathbb{H}} \mathrm{Res}_{\mathbb{H}_{J'}} X , \quad h \otimes x \mapsto \epsilon_J^{J'} h \otimes x ,
\]
where $\epsilon_J^{J'}$ is an appropiate choice of $\pm 1$. 
Define
\[\pi_i= \bigoplus_{|J|=i} \bigoplus_{\substack{J \subset J' \subset \Pi \\ |J'|=i+1}} \pi_J^{J'}: C_i(X) \rightarrow C_{i+1}(X).
\]

Using $C_m(X)=X$, we have the following sequence of maps, which will be proven to be exact:
\begin{align} \label{eqn ind-res resol} 0\rightarrow \ker \pi_0 \rightarrow C_0(X) \stackrel{\pi_0}{\rightarrow} C_{1}(X) \stackrel{\pi_{1}}{\rightarrow} \ldots \stackrel{\pi_{m-2}}{\rightarrow} C_{m-1}(X) \stackrel{\pi_{m-1}}{\rightarrow} X \rightarrow 0 .
\end{align}

We now fix a non-zero $z \in X$. Let $C_J(X,z)$ be the linear subspace of $\mathbb{H} \otimes_{\mathbb{H}_J} \mathrm{Res}_{\mathbb{H}_J} X$ spanned by $t_w \otimes t_w^{-1}.z$ ($w \in W$). Let $C_i(X,z) \cong \bigoplus_{|J|=i} C_J(X,z)$ be the natural linear subspace of $C_i(X)$. Then $\pi_i$ induces a linear map $\pi_i^z: C_i(X,z) \rightarrow C_{i-1}(X,z)$. We now define another linear isomorphism $\varpi_J:   C_J(X,z)\rightarrow C_J^{\Delta}(X)$ characterized by
\[ t_w \otimes t_w^{-1}.z \mapsto [ w(v_{i_1^J}), \ldots, w(v_{i_k^J}) ] .
\]
Then those $\varpi_J$ induce a map $\varpi_i: C_i(X, z) \rightarrow C_i^{\Delta}(X)$. It is straightforward to verify that $\varpi_{i+1}^{-1} \circ \pi_i \circ \varpi_i$ is the differential for the simplicial complexes formed by $C_i^{\Delta}(X)$. Thus by the (reduced) homology of a sphere,
\begin{align*} \label{eqn ind-res resol x} 0\rightarrow \ker \pi_0^z \rightarrow C_0(X,z) \stackrel{\pi_0^z}{\rightarrow} C_{1}(X,z) \stackrel{\pi_{1}^z}{\rightarrow} \ldots \stackrel{\pi_{m-2}^z}{\rightarrow} C_{m-1}(X,z) \stackrel{\pi_{m-1}^z}{\rightarrow} C_m(X, z) \rightarrow 0 .
\end{align*}
is a long exact sequence. Hence, we have:

\begin{proposition} \label{prop ind res resol}
Let $X$ be an $\mathbb{H}$-module. (\ref{eqn ind-res resol}) is a resolution for $X$ (i.e.  (\ref{eqn ind-res resol}) is a long exact sequence).

\end{proposition}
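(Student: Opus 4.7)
The plan is to reduce the proposition to the exactness of the subcomplex $C_{\bullet}(X,z)$, which has already been established via the Coxeter simplicial complex identification just before the proposition statement. The reduction proceeds by decomposing the full complex $C_{\bullet}(X)$ as a direct sum of such subcomplexes indexed by a basis of $X$.

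First, fix a basis $z_1, \ldots, z_n$ of $X$ and claim that as vector spaces
\[
C_J(X) \;=\; \bigoplus_{k=1}^n C_J(X, z_k)
\]
for every $J \subset \Pi$. To prove this, use the standard vector-space decomposition $C_J(X) = \bigoplus_{w \in W^J} t_w \otimes X$. In each summand $t_w \otimes X$, the part of $C_J(X,z_k)$ contributed is the line spanned by $t_w \otimes t_w^{-1} z_k$, and as $k$ varies the vectors $\{t_w^{-1} z_k\}_k$ form a basis of $X$ since $t_w^{-1}$ acts invertibly. The claimed equality then follows from linear independence in each $w$-summand together with matching of total dimensions.

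Second, verify that the differentials respect this decomposition, i.e.\ $\pi_J^{J'}(C_J(X,z)) \subseteq C_{J'}(X,z)$ for every $z \in X$ and every pair $J \subset J'$. Given $w \in W^J$, factor $w = w'' u$ with $w'' \in W^{J'}$ and $u \in W_{J'}$ (possible since $W_J \subset W_{J'}$). In $C_{J'}(X) = \mathbb{H} \otimes_{\mathbb{H}_{J'}} X$ one may now push $t_u$ across the tensor symbol to obtain
\[
t_w \otimes t_w^{-1} z \;=\; t_{w''} t_u \otimes t_u^{-1} t_{w''}^{-1} z \;=\; t_{w''} \otimes t_{w''}^{-1} z,
\]
which lies in $C_{J'}(X,z)$. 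Since the sign $\epsilon_J^{J'}$ does not depend on $z$, the map $\pi_J^{J'}$ preserves the decomposition of Step~1.

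Third, conclude the argument. The full complex $C_{\bullet}(X)$ splits as a direct sum of the subcomplexes $C_{\bullet}(X, z_k)$ (with $\ker \pi_0 = \bigoplus_k \ker \pi_0^{z_k}$), and each summand is exact by the Coxeter-complex identification on the reduced chain complex of the $(m-1)$-sphere. A direct sum of exact sequences is exact, giving the proposition.

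The only nontrivial point is the compatibility in Step~2: one must correctly handle the change of the equivalence relation when passing from $\mathbb{H} \otimes_{\mathbb{H}_J} X$ to $\mathbb{H} \otimes_{\mathbb{H}_{J'}} X$, which is precisely what allows $t_u$ to be absorbed and lands the image in the matching $C_{J'}(X,z)$. Once this and the direct-sum decomposition of Step~1 are in hand, there is no further work.
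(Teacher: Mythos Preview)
Your argument is correct and is precisely the completion of the paper's own proof: the paper establishes exactness of the subcomplex $C_{\bullet}(X,z)$ via the Coxeter-sphere identification and then simply writes ``Hence, we have'' before stating the proposition, leaving the direct-sum decomposition over a basis of $X$ implicit; you have made that step explicit, and your verification in Step~2 that $\pi_J^{J'}$ preserves each $C_J(X,z)$ is exactly the point needed. Two cosmetic remarks: you reuse the letter $n$ for $\dim X$ while the paper reserves $n=\dim V$, and your phrasing ``a basis $z_1,\ldots,z_n$'' tacitly assumes $X$ is finite dimensional, though the argument goes through verbatim for an arbitrary basis.
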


%Let $T(J)$ be the vector space with a basis indexed by $T_w$ ($w \in W^J$). Let $\Omega_J: \mathbb{H} \otimes_{\mathbb{H}_J} \mathrm{Res}_{\mathbb{H}_J} X \rightarrow  T(J)$ such that $\Omega_J(h \otimes x)= \sum_{w \in W^J} T_w \otimes t_wh_w.x$, where $h_w \in \mathbb{H}_J$ such that $h=\sum_{w \in W^J} t_wh_w$ (the map essentially coincides with the inverse of the map $\phi$ in \cite[(1.3)]{Ka0}). 

\subsection{Aubert involution}
For any $x \in X$, define a linear map $\chi: X \rightarrow \mathbb{H} \otimes_{S(V)} \mathrm{Res}_{S(V)}X$ characterized by $\chi(x)=\sum_{w \in W} (-1)^{l(w)} t_w \otimes t_w^{-1}.x$. Note that $\chi$ is injective.

\begin{lemma} \label{lem im D action}
Let $x \in X$. 
\begin{enumerate} 
\item[(1)] For a simple reflection $s$, $t_s.\chi(x)=-\chi(t_s.x)=\chi(\iota(t_s^{\bullet})^*.x)$.
\item[(2)] For $v \in V$, $v. \chi(x)= \chi(\iota(v^{\bullet})^*.x)$. 
\end{enumerate}
In particular, $\im \chi$ is $\mathbb{H}$-invariant and $\im \chi \cong \mathbb{D}(X)$.
\end{lemma}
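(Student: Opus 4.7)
The plan is to verify the two equivariance formulas directly and then identify $\im\chi$ with $\mathbb{D}(X)$ by unwinding the dualities. For (1), I would compute
\[ t_s.\chi(x)=\sum_{w\in W}(-1)^{l(w)}t_s t_w\otimes t_w^{-1}.x=\sum_w(-1)^{l(w)}t_{sw}\otimes t_w^{-1}.x \]
(using the algebra embedding $w\mapsto t_w$) and re-index by $w'=sw$. Since $t_w^{-1}=t_{w'}^{-1}t_s$ (from $s^2=e$) and $(-1)^{l(w)}=-(-1)^{l(w')}$, the sum collapses to $-\chi(t_s.x)$. On the other hand, $\iota(t_s^{\bullet})^*=-t_s$, because $t_s^{\bullet}=t_s^{-1}=t_s$, $\iota(t_s)=\sgn(s)t_s=-t_s$, and $(-t_s)^*=-t_s^{-1}=-t_s$; hence $\chi(\iota(t_s^{\bullet})^*.x)=-\chi(t_s.x)$.

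For (2), I would first note that $\iota(v^{\bullet})^*=-v^*=t_{w_0}w_0(v)t_{w_0}^{-1}$, using $v^{\bullet}=v$, $\iota(v)=-v$, and $\theta(v)=-w_0(v)$. To prove $v.\chi(x)=\chi(\iota(v^{\bullet})^*.x)$, the cleanest route passes through the tilde element $\widetilde v$ of Section~\ref{sec kos resol}. Using Lemma~\ref{lem tilde element}, which gives $\widetilde v\,t_w=t_w\,\widetilde{w^{-1}(v)}$, and expanding $\widetilde{w^{-1}(v)}=w^{-1}(v)-\tfrac12\sum_{\beta\in R^+}k_\beta\langle w^{-1}(v),\beta^{\vee}\rangle t_{s_\beta}$, the polynomial part moves freely across the tensor product over $S(V)$, and a careful re-indexing of the reflection terms (pairing $w$ with $ws_\beta$) yields the intermediate identity $\widetilde v.\chi(x)=\chi(\widetilde v.x)$. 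This matches the expected formula because $\widetilde v^{\bullet}=\widetilde v$ and $\widetilde v^*=-\widetilde v$ (as recorded in the proof of Lemma~\ref{lem star form}), so $\iota(\widetilde v^{\bullet})^*=\widetilde v$. Finally, expanding $\widetilde v=v-\tfrac12\sum_\alpha k_\alpha\langle v,\alpha^{\vee}\rangle t_{s_\alpha}$ in both $\widetilde v.\chi(x)$ and $\chi(\widetilde v.x)$ and invoking (1) to replace each $t_{s_\alpha}.\chi(x)$ by $-\chi(t_{s_\alpha}.x)$ isolates the desired identity $v.\chi(x)-\chi(\iota(v^{\bullet})^*.x)=0$.

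For the concluding statement, (1) and (2) together assert that $\chi$ intertwines the twisted action $h.x:=\iota(h^{\bullet})^*.x$ on $X$ (which is a genuine module structure, since $h\mapsto\iota(h^{\bullet})^*$ is an algebra homomorphism, being the composition of one involution and two anti-involutions) with the standard left action on $\mathbb{H}\otimes_{S(V)}X$. Because the PBW coefficient of $1\otimes x$ in $\chi(x)$ is $x$, the map $\chi$ is injective, so $\im\chi$ is $\mathbb{H}$-invariant and is isomorphic, as an $\mathbb{H}$-module, to $X$ equipped with this twisted action. To identify the twist with $\mathbb{D}(X)=\iota(X^{\bullet})^*$, finite-dimensionality furnishes the canonical identification $(\iota(X^{\bullet}))^*\cong X$ of vector spaces, and unwinding the definitions shows that the $\mathbb{D}(X)$-action on $X$ is $h\mapsto\iota(h^*)^{\bullet}$. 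The equality $\iota(h^*)^{\bullet}=\iota(h^{\bullet})^*$ is then immediate on the generators $t_w$ (both equal $\sgn(w)t_w$) and $v\in V$ (both equal $t_{w_0}w_0(v)t_{w_0}^{-1}$), completing the identification $\im\chi\cong\mathbb{D}(X)$.

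The main obstacle is the reflection-term bookkeeping in the intermediate identity $\widetilde v.\chi(x)=\chi(\widetilde v.x)$: both sides involve sums over $W\times R^+$, and the apparent mismatch between a reflection appearing inside the tilde element on the left and multiplying $x$ on the right must be reconciled by a substitution $u=ws_\beta$ together with the parity identity $(-1)^{l(ws_\beta)}=-(-1)^{l(w)}$. Once this re-indexing is carried out carefully, the remaining steps are routine.
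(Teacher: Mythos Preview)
Your proposal is correct and takes a genuinely different route for part~(2) than the paper. The paper computes $v.\chi(x)$ and $\chi(t_{w_0}\theta(v)t_{w_0}^{-1}.x)$ directly, using the iterated cross relation
\[
vt_w = t_w\Bigl(w^{-1}(v) - \sum_{\alpha>0,\ w(\alpha)<0} k_\alpha\langle w^{-1}(v),\alpha^\vee\rangle t_{s_\alpha}\Bigr),
\]
and then matches the two expansions term by term after a change of summation variable. Your argument instead passes through the intermediate identity $\widetilde v.\chi(x)=\chi(\widetilde v.x)$, exploiting the clean commutation $\widetilde v\,t_w = t_w\,\widetilde{w^{-1}(v)}$ from Lemma~\ref{lem tilde element}; once that is established, the formula for $v$ follows by linearity from (1) and the expansion $v=\widetilde v+\tfrac12\sum_\alpha k_\alpha\langle v,\alpha^\vee\rangle t_{s_\alpha}$. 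Your route is conceptually tidier and reuses machinery already set up in Section~\ref{s resol h mod}, at the cost of one extra reduction step; the paper's route is more self-contained but requires tracking the condition $w(\alpha)<0$ through two separate expansions.

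One small point deserves to be made explicit: when you ``invoke (1) to replace each $t_{s_\alpha}.\chi(x)$ by $-\chi(t_{s_\alpha}.x)$'', the sum runs over all $\alpha\in R^+$, whereas (1) as stated is only for simple reflections. You should remark that (1) immediately gives $t_w.\chi(x)=\sgn(w)\,\chi(t_w.x)$ for every $w\in W$ by writing $t_w$ as a product of simple generators, and hence $t_{s_\alpha}.\chi(x)=-\chi(t_{s_\alpha}.x)$ for any positive root $\alpha$. With that sentence added, the argument is complete.
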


\begin{proof}
(1) is straightforward.
For (2), 
\begin{align*}
 & v. \chi(x) \\
= &  \sum_{w \in W}(-1)^{l(w)} vt_w \otimes t_w^{-1}.x \ \\
= & \sum_{w \in W} (-1)^{l(w)}vt_w \otimes t_w^{-1}.x \\
= & \sum_{w \in W} (-1)^{l(w)}t_w \left(w^{-1}(v)-\sum_{\alpha>0, w(\alpha)<0} k_{\alpha}\langle w^{-1}(v), \alpha^{\vee} \rangle t_{s_{\alpha}} \right) \otimes t_w^{-1}.x\\
= & \sum_{w \in W}(-1)^{l(w)} t_w \otimes w^{-1}(v) t_{w^{-1}}.x-\sum_{w \in W}\sum_{\alpha>0, w(\alpha)<0} (-1)^{l(w)}k_{\alpha}\langle w^{-1}(v), \alpha^{\vee} \rangle  t_wt_{s_{\alpha}} \otimes t_{w}^{-1}.x \\
= & \sum_{w \in W }(-1)^{l(w)} t_w \otimes w^{-1}(v) t_{w}^{-1}.x -\sum_{w \in W} \sum_{\alpha>0, w(\alpha)>0} (-1)^{l(ws_{\alpha})+1} k_{\alpha} \langle w^{-1}(v), \alpha^{\vee} \rangle t_w  \otimes t_{s_{\alpha}}t_{w}^{-1}.x
\end{align*}

On the other hand, we also have
\begin{align*}
 &  \chi (t_{w_0}\theta(v)t_{w_0}^{-1}.x )\\
= &  \sum_{w \in W}(-1)^{l(w)} t_w \otimes t_w^{-1}t_{w_0}\theta(v)t_{w_0}^{-1}.x\\
= & \sum_{w \in W} (-1)^{l(w)}t_w \otimes t_{w^{-1}w_0}\theta(v)t_{w_0}^{-1}.x \\
= & \sum_{w \in W} (-1)^{l(w)}t_w  \otimes\left(-w^{-1}(v)-\sum_{\alpha>0, w_0w(\alpha)<0} k_{\alpha}\langle -w^{-1}(v), \alpha^{\vee} \rangle t_{s_{\alpha}} \right) t_w^{-1}.x \quad \mbox{(by $w_0\theta(v)=-v$)} \\
= & -\sum_{w \in W}(-1)^{l(w)} t_w \otimes w^{-1}(v) t_{w}^{-1}.x-\sum_{w \in W} (-1)^{l(w)}\sum_{\alpha>0, w(\alpha)>0}k_{\alpha}\langle -w^{-1}(v), \alpha^{\vee} \rangle  t_w\otimes t_{s_{\alpha}} t_{w}^{-1}.x \\
= & -\sum_{w \in W }(-1)^{l(w)} t_w \otimes w^{-1}(v) t_{w}^{-1}.x +\sum_{w \in W} (-1)^{l(w)}\sum_{\alpha>0, w(\alpha)>0} k_{\alpha} \langle w^{-1}(v), \alpha^{\vee} \rangle t_w  \otimes t_{s_{\alpha}}t_w^{-1}.x \\
=&-v. \chi(x)
\end{align*}
This proves (2).
\end{proof}

\begin{lemma} \label{lem des ker}
With the notations in Section \ref{ss ind res resol}, $\ker \pi_0 =\im \chi$.
\end{lemma}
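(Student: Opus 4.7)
The plan is to verify both inclusions of $\ker\pi_0=\im\chi$ by direct computation, exploiting the vector-space decomposition
\[ C_0(X)=\mathbb{H}\otimes_{S(V)}X \;\cong\; \bigoplus_{w\in W} t_w\otimes X \]
coming from the factorization $\mathbb{H}\cong \mathbb{C}[W]\otimes S(V)$. Recall that $\pi_0=\bigoplus_{\alpha\in\Pi}\pi_{\emptyset}^{\{\alpha\}}$, and that in $\mathbb{H}\otimes_{\mathbb{H}_{\{\alpha\}}}X$ one has the balanced-tensor identity $t_wt_{s_\alpha}\otimes y=t_w\otimes t_{s_\alpha}y$. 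The signs $\epsilon_{\emptyset}^{\{\alpha\}}$ are overall nonzero scalars in each summand and therefore do not affect kernels.

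For the inclusion $\im\chi\subseteq \ker\pi_0$, I would fix $x\in X$ and a simple root $\alpha\in\Pi$, and pair each $w\in W$ with $ws_\alpha$, arranged so that $w\in W^{\{\alpha\}}$, i.e.\ $l(ws_\alpha)=l(w)+1$. Using $(ws_\alpha)^{-1}=s_\alpha w^{-1}$ and $t_{s_\alpha w^{-1}}=t_{s_\alpha}t_{w^{-1}}$, the two contributions to $\pi_{\emptyset}^{\{\alpha\}}(\chi(x))$ become
\[ (-1)^{l(w)}t_w\otimes t_{w^{-1}}x \;+\; (-1)^{l(w)+1}t_w t_{s_\alpha}\otimes t_{s_\alpha}t_{w^{-1}}x \;=\; \bigl((-1)^{l(w)}+(-1)^{l(w)+1}\bigr)t_w\otimes t_{w^{-1}}x \;=\;0, \]
after collapsing $t_{s_\alpha}^2=1$. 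Summing over $w\in W^{\{\alpha\}}$ shows $\pi_{\emptyset}^{\{\alpha\}}(\chi(x))=0$, hence $\chi(x)\in\ker\pi_0$.

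For the reverse inclusion $\ker\pi_0\subseteq\im\chi$, I would take $y=\sum_{w\in W}t_w\otimes x_w\in C_0(X)$ (unique decomposition). Reorganizing the same pairing, inside $\mathbb{H}\otimes_{\mathbb{H}_{\{\alpha\}}}X\cong \bigoplus_{w\in W^{\{\alpha\}}}t_w\otimes X$ we get
\[ \pi_{\emptyset}^{\{\alpha\}}(y) \;=\; \epsilon_{\emptyset}^{\{\alpha\}}\sum_{w\in W^{\{\alpha\}}} t_w\otimes\bigl(x_w+t_{s_\alpha}x_{ws_\alpha}\bigr). \]
Vanishing forces $x_{ws_\alpha}=-t_{s_\alpha}x_w$ whenever $l(ws_\alpha)=l(w)+1$. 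Iterating this recursion along any reduced expression $w=s_{\alpha_1}\cdots s_{\alpha_k}$ yields $x_w=(-1)^{l(w)}t_{w^{-1}}x_1$, and the answer depends only on the product $t_{w^{-1}}$ (independent of the reduced expression). Consequently $y=\sum_{w}(-1)^{l(w)}t_w\otimes t_{w^{-1}}x_1=\chi(x_1)\in\im\chi$, completing the proof.

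The main obstacle is the second inclusion: one must carefully show that the local recursion coming from each simple reflection extends consistently to all of $W$. This is handled by observing that the closed-form $x_w=(-1)^{l(w)}t_{w^{-1}}x_1$ is intrinsic to the Weyl-group element $w$, so the recursion is automatically well-defined along every sequence of simple covers from $1$ to $w$. Alternatively, one could deduce the second inclusion from a dimension count using the exactness in Proposition~\ref{prop ind res resol} together with the reduced top homology of the Coxeter sphere (each fixed choice of $z$ yields a one-dimensional $\ker\pi_0^z=\mathbb{C}\chi(z)$), but the direct PBW-style argument above avoids any such bookkeeping.
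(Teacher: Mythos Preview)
Your proof is correct and follows essentially the same idea as the paper's (which in turn defers to Kato): reduce the kernel condition $\pi_0(y)=0$ to a sign recursion along simple reflections and solve it. The only difference is cosmetic. The paper works inside the subspaces $C_0(X,z)$, writing a general element there as $\sum_w a_w\,t_w\otimes t_w^{-1}z$ with scalar coefficients $a_w$, and derives $a_{ws}=-a_w$; you instead use the full PBW decomposition $y=\sum_w t_w\otimes x_w$ with $x_w\in X$ and derive $x_{ws_\alpha}=-t_{s_\alpha}x_w$. Your parameterization handles all of $C_0(X)$ at once without needing the spanning by the $C_0(X,z)$'s, and you also spell out the easy inclusion $\im\chi\subseteq\ker\pi_0$ that the paper leaves implicit.
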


\begin{proof}
This is \cite[Lemma 1]{Ka0}. Note that if the element of the form $ \sum_{w \in W} a_wt_w \otimes (t_w^{-1}.x)$ is in $\ker \pi_0$, then $a_{ws}=-a_w$ for all $w \in W$ and all simple reflections $s \in W$. Hence $ \sum_{w \in W} a_wt_w \otimes (t_w^{-1}.x)=a_1\chi(x)$. We refer the reader for the detail in \cite[Lemma 1]{Ka0}. 
\end{proof}

Let $\mathrm{G}(\mathbb{H})$ be the Grothendieck group of finite-dimensional $\mathbb{H}$-modules. For a finite-dimensional $\mathbb{H}$-module $X$, denote by $[X]$ the image of $X$ in $\mathrm{G}(\mathbb{H})$. Define the Aubert-type involution $\widetilde{\mathbb{D}}: \mathrm{G}(\mathbb{H}) \rightarrow \mathrm{G}(\mathbb{H})$ as:
\[   \widetilde{\mathbb D}([X]) =\sum_{ J \subset \Pi} (-1)^{|J|} [\mathrm{Ind}_{\mathbb{H}_J}^{\mathbb{H}} \mathrm{Res}_{\mathbb{H}_J}X ].
\] 
 
\begin{theorem}
Let $X$ be an irreducible $\mathbb{H}$-module. Then $[\mathbb{D}(X)]=\widetilde{\mathbb{D}}([X])$.
\end{theorem}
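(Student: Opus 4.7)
The plan is to combine the Ind--Res resolution of Proposition \ref{prop ind res resol} with the identification of $\ker \pi_0$ as the Aubert-type dual $\mathbb{D}(X)$. All the heavy lifting has been done in Section \ref{s kato resolution}, so this is essentially a bookkeeping argument in the Grothendieck group $\mathrm{G}(\mathbb{H})$.

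First I would invoke Proposition \ref{prop ind res resol} to obtain the exact sequence
\[
0 \to \ker \pi_0 \to C_0(X) \stackrel{\pi_0}{\to} C_1(X) \stackrel{\pi_1}{\to} \cdots \stackrel{\pi_{m-2}}{\to} C_{m-1}(X) \stackrel{\pi_{m-1}}{\to} C_m(X)=X \to 0,
\]
where $m=|\Pi|$ and $C_i(X)=\bigoplus_{|J|=i}\mathrm{Ind}_{\mathbb{H}_J}^{\mathbb{H}}\mathrm{Res}_{\mathbb{H}_J}X$. Since an exact sequence vanishes in the Grothendieck group as an alternating sum, rearranging gives
\[
[\ker \pi_0] \;=\; \sum_{k=0}^{m} (-1)^k [C_k(X)] \;=\; \sum_{J\subset \Pi}(-1)^{|J|}\bigl[\mathrm{Ind}_{\mathbb{H}_J}^{\mathbb{H}}\mathrm{Res}_{\mathbb{H}_J}X\bigr] \;=\; \widetilde{\mathbb{D}}([X]).
\]

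Second, I would appeal to Lemma \ref{lem des ker} to identify $\ker \pi_0 = \mathrm{Im}\,\chi$ inside $C_0(X)=\mathbb{H}\otimes_{S(V)}\mathrm{Res}_{S(V)}X$, and then to Lemma \ref{lem im D action} to identify $\mathrm{Im}\,\chi$ with $\mathbb{D}(X)$ as $\mathbb{H}$-modules (the key computation $v\cdot \chi(x)=\chi(\iota(v^{\bullet})^{*}\cdot x)$ and its $t_s$-analogue already appear there). Stringing these identifications together gives $[\mathbb{D}(X)]=[\ker \pi_0]=\widetilde{\mathbb{D}}([X])$, as required.

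There is no genuine obstacle; the work has been front-loaded into the preceding lemmas. The only small subtlety to flag is that the argument actually goes through for any finite-dimensional $\mathbb{H}$-module $X$ — irreducibility is not used in the chain of equalities. The hypothesis is presumably kept because $\widetilde{\mathbb{D}}$ is most naturally interpreted on the basis of irreducible classes and because one wants $\mathbb{D}(X)$ to define a well-defined involution on irreducibles (up to a sign encoded in $\widetilde{\mathbb{D}}$).
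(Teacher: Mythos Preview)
Your proposal is correct and follows essentially the same route as the paper's proof: apply Proposition \ref{prop ind res resol} to get the Euler characteristic identity $[\ker\pi_0]=\widetilde{\mathbb{D}}([X])$, then use Lemmas \ref{lem des ker} and \ref{lem im D action} to identify $\ker\pi_0\cong\mathbb{D}(X)$. Your remark that irreducibility is not actually used is also accurate.
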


\begin{proof}
The proof is similar to the one of \cite[Theorems 1 and 2]{Ka0} for Hecke algebras. By Proposition \ref{prop ind res resol}, $\widetilde{\mathbb{D}}([X])=[\ker \pi_0]$, where $\pi_0$ is defined as above or as in Section \ref{ss ind res resol}. By Lemma \ref{lem des ker} and Lemma \ref{lem im D action}, we have $\ker \pi_0 \cong \mathbb{D}(X)$. Hence we obtain the statement.
\end{proof}

\section{Extensions of discrete series} \label{s ext ds}

In this section, we assume $R$ spans $V$. Let $n=\dim V=|\Pi|$. This assumption is more convenient for the discussion of discrete series and it should not be hard to formulate results without this assumption. 

This section is to compute extensions of discrete series by using the resolution (\ref{prop ind res resol}) and the duality of Theorem \ref{thm poin dua}.

\subsection{Extensions of discrete series}

We first state an algebraic definition of a tempered module and a discrete series, following \cite{Ev} and \cite{KR}. Since $V$ admits a natural real form, we can talk about the real part of a $S(V)$-weight of an $\mathbb{H}$-module.
\begin{definition} \label{def temp ds}
%An $\mathbb{H}$-module $X$ is said to be {\it tempered} if any weight $\gamma$ of $X$ is of the form:
%\[   \mathrm{Re} \gamma = \sum_{\alpha \in \Pi} a_{\alpha} \alpha^{\vee}, \quad a_{\alpha} \leq 0 .\]
%Equivalently, an $\mathbb{H}$-module $X$ is tempered if and only if $\langle \omega_{\alpha}, \gamma \rangle\leq 0$ for any $\alpha \in \Pi$ and for all weight $\gamma$ of $X$. 
An $\mathbb{H}$-module $X$ is said to be a {\it tempered module} if the real part of any $S(V)$-weight $\gamma$ of $X$ has the form:
\begin{align} \label{eqn temp}
   \mathrm{Re} \gamma = \sum_{\alpha \in \Pi} a_{\alpha} \alpha^{\vee}, \quad a_{\alpha} \leq 0 .
\end{align}
Equivalently, $X$ is a tempered module if and only if $\langle \omega_{\alpha^{\vee}},\mathrm{Re}\gamma \rangle \leq 0$ for all $\alpha \in \Pi$ and for all weight $\gamma$ of $X$. Here $\omega_{\alpha^{\vee}}$ is the fundamental weight corresponding to $\alpha^{\vee}$ i.e. $\langle \omega_{\alpha^{\vee}}, \beta^{\vee} \rangle =0$ for $\beta \in \Pi \setminus \left\{ \alpha \right\}$ and $\langle \omega_{\alpha^{\vee}}, \alpha^{\vee} \rangle =1$

An $\mathbb{H}$-module is said to be a {\it discrete series} if all the inequalities in (\ref{eqn temp}) are strict. In particular, any discrete series is tempered.
\end{definition}

\begin{remark}
For the real-valued and equal parameters $k$, discrete series and tempered modules naturally come from the corresponding notion of $p$-adic groups and affine Hecke algebras. They play important roles for the Langlands classification of simple modules, which is also valid for arbitrary parameters. In other words, tempered modules form the basic building blocks for simple modules of graded Hecke algebras for arbitrary parameters (see  \cite[Section 2]{KR} for the details). However, the classification for tempered modules for non-real-valued and unequal parameters seems to be less studied.
\end{remark}

An important ingredient for computing extensions between a tempered module and a discrete series is the following vanishing result.
\begin{lemma}\label{lem proper vanishing}
Let $X_1$ be an irreducible tempered module and let $X_2$ be an irreducible discrete series. Let $J$ be a proper subset of $\Pi$. Then 
\[  \mathrm{Ext}_{\mathbb{H}}^i(\mathrm{Ind}_{\mathbb{H}_J}^{\mathbb{H}} \mathrm{Res}_{\mathbb{H}_J}X_1, \mathbb{D}(X_2)) =0 \] 
for all $i$. 
\end{lemma}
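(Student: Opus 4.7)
The plan is to reduce to an $\mathbb{H}_J$-level Ext computation via Frobenius reciprocity and then exploit a mismatch of $\mathbb{H}_J$-central characters forced by the tempered/discrete series dichotomy. First I will use that $\mathbb{H}$ is free as a right $\mathbb{H}_J$-module (with basis $\{t_w : w\in W^J\}$), so that $\mathrm{Ind}_{\mathbb{H}_J}^{\mathbb{H}}=\mathbb{H}\otimes_{\mathbb{H}_J}(-)$ is exact and sends projective $\mathbb{H}_J$-modules to projective $\mathbb{H}$-modules. Combined with the tensor-hom adjunction, this upgrades to the identification
\[ \mathrm{Ext}^i_{\mathbb{H}}(\mathrm{Ind}_{\mathbb{H}_J}^{\mathbb{H}} \mathrm{Res}_{\mathbb{H}_J} X_1,\, \mathbb{D}(X_2)) \cong \mathrm{Ext}^i_{\mathbb{H}_J}(\mathrm{Res}_{\mathbb{H}_J} X_1,\, \mathrm{Res}_{\mathbb{H}_J} \mathbb{D}(X_2)). \]

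Next I would decompose $\mathrm{Res}_{\mathbb{H}_J} X_1$ and $\mathrm{Res}_{\mathbb{H}_J} \mathbb{D}(X_2)$ as $\mathbb{H}_J$-modules into their generalized $Z(\mathbb{H}_J)=S(V)^{W_J}$-eigenspaces, which are indexed by $W_J$-orbits of $S(V)$-weights. By Proposition~\ref{prop char ext 0} applied to $\mathbb{H}_J$, the Ext groups split as a direct sum over pairs of central characters, and the off-diagonal pieces vanish. Thus it suffices to show that no $W_J$-orbit of an $S(V)$-weight of $X_1$ coincides with a $W_J$-orbit of an $S(V)$-weight of $\mathbb{D}(X_2)$.

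For the weight analysis, since $\mathbb{D}(X_2)\cong \iota(\theta(X_2))$ by Lemma~\ref{lem two dual theta}, a short check of how $\iota$ and $\theta$ act on $S(V)$-weights shows that the $S(V)$-weights of $\mathbb{D}(X_2)$ are exactly $\{w_0(\mu) : \mu \text{ a weight of } X_2\}$. Combining $w_0\,\omega_{\alpha^\vee} = -\omega_{\theta(\alpha)^\vee}$ (since $-w_0$ permutes $\Pi$) with the strict discrete series inequalities for $X_2$ yields $\langle \omega_{\alpha^\vee},\, \mathrm{Re}\,\gamma'\rangle > 0$ for every weight $\gamma'$ of $\mathbb{D}(X_2)$ and every $\alpha\in\Pi$, while the temperedness of $X_1$ provides $\langle \omega_{\alpha^\vee},\, \mathrm{Re}\,\gamma\rangle \leq 0$ for every weight $\gamma$ of $X_1$ and every $\alpha\in\Pi$.

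To conclude, the properness of $J$ lets me pick $\alpha_0\in\Pi\setminus J$. The fundamental weight $\omega_{\alpha_0^\vee}$ is $W_J$-invariant (since $s_\beta$ fixes it for each $\beta\in J$), so the linear functional $\langle \omega_{\alpha_0^\vee},\,\cdot\rangle$ is constant along each $W_J$-orbit; its value is $\leq 0$ on every orbit arising from a weight of $X_1$ but $>0$ on every orbit arising from a weight of $\mathbb{D}(X_2)$, forcing the two families of central characters to be disjoint and the Ext groups to vanish. The main delicate step will be the third paragraph: correctly tracking weights through $\iota$ and $\theta$ and verifying that the strict positivity of $\mathrm{Re}\,\gamma'$ survives projection onto the $W_J$-invariant direction detected by $\omega_{\alpha_0^\vee}$ for $\alpha_0 \in \Pi\setminus J$.
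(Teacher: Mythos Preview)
Your proposal is correct and follows essentially the same approach as the paper's proof: reduce to $\mathbb{H}_J$ via Frobenius reciprocity, then separate central characters by pairing weights against a fundamental weight $\omega_{\alpha_0^{\vee}}$ for some $\alpha_0\in\Pi\setminus J$, which is $W_J$-invariant. Your version is in fact more explicit than the paper's in tracking the weights of $\mathbb{D}(X_2)$ through $\iota$ and $\theta$, whereas the paper simply asserts $\langle\omega,\mathrm{Re}\,\gamma_2\rangle>0$ without spelling out that step.
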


\begin{proof}
Pick $\beta \in \Pi \setminus J$. Set $\omega=\omega_{\beta^{\vee}}$. Then for any weight $\gamma_1$ of $X_1$, $\langle \omega, \mathrm{Re}\gamma_1 \rangle \leq 0$ and so $\mathrm{Re}\gamma_1$ is of the form $\sum_{\alpha \in \Pi \setminus \left\{ \beta \right\}} a_{\alpha}\alpha^{\vee}+a_{\beta}\omega^{\vee}$, where $a_{\alpha} \in \mathbb{R}$, $a_{\beta} \leq 0$ and $\omega^{\vee}$ is the fundamental coweight corresponding to $\beta$. For any weight $\gamma_2$ of $\mathbb{D}(X_2)$, $\langle  \omega, \mathrm{Re}\gamma_2 \rangle >0$ and so $\mathrm{Re}\gamma_2$ is of the form $\sum_{\alpha \in \Pi \setminus \left\{ \beta \right\}} b_{\alpha}\alpha^{\vee}+b_{\beta}\omega^{\vee}$, where $b_{\alpha} \in \mathbb{R}$ and $b_{\beta}>0$. Hence $\gamma_1$ and $\gamma_2$ cannot be $W_J$ conjugate to each other. Thus $\mathrm{Res}_{\mathbb{H}_J}X_1$ and $\mathrm{Res}_{\mathbb{H}_J}\mathbb{D}(X_2)$ have distinct $\mathbb{H}_J$-central characters. Hence, by Proposition \ref{prop char ext 0}, for all $i$,
\[  \mathrm{Ext}_{\mathbb{H}_J}^i(\mathrm{Res}_{\mathbb{H}_J}X_1, \mathrm{Res}_{\mathbb{H}_J}\mathbb{D}(X_2))=0. \]
Then by Frobenius reciprocity, we obtain the statement.
\end{proof}

\begin{theorem} \label{thm ds ext}
 Let $\mathbb{H}$ be the graded affine Hecke algebra associated to a root datum $( R, V, R^{\vee}, V^{\vee}, \Pi)$ and a parameter function $k: \Pi \rightarrow \mathbb{C}$ (Definition \ref{def graded affine}). Assume $R$ spans $V$. Let $X_1$ be an irreducible tempered module and let $X_2$ be an irreducible discrete series. Then
\[     \mathrm{Ext}^i_{\mathbb{H}}(X_1, X_2) \cong \left\{ \begin{array}{cc}
                                                       \mathbb{C} & \quad \mbox{ if $X_1 \cong X_2$ and $i=0$ } \\
																											 0       & \quad \mbox{ otherwise . }
																					\end{array}  \right.
\]

\end{theorem}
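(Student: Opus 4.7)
The strategy is to combine the Yoneda duality of Theorem \ref{thm dua YP} with the Ind-Res resolution of $X_2$ furnished by Proposition \ref{prop ind res resol}, thereby reducing the higher-Ext vanishing to a central-character argument in the style of Lemma \ref{lem proper vanishing} -- only with the roles of the tempered module and discrete series swapped.

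More precisely, by the non-degenerate Yoneda pairing of Theorem \ref{thm dua YP}(2) applied with $X=X_1$ and $Y=X_2$, one has
\[ \dim \mathrm{Ext}^i_{\mathbb{H}}(X_1, X_2) = \dim \mathrm{Ext}^{n-i}_{\mathbb{H}}(X_2, \mathbb{D}(X_1)), \]
so it suffices to prove $\mathrm{Ext}^j_{\mathbb{H}}(X_2, \mathbb{D}(X_1))=0$ for $0 \leq j \leq n-1$; the case $i=0$ of the theorem then follows from the immediate Schur's-lemma computation of $\mathrm{Hom}_{\mathbb{H}}(X_1, X_2)$ for irreducible $X_1, X_2$. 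Apply Proposition \ref{prop ind res resol} to $X_2$ to obtain
\[ 0 \to \mathbb{D}(X_2) \to C_0(X_2) \to \cdots \to C_{n-1}(X_2) \to X_2 \to 0, \]
chop it into the usual short exact sequences $0 \to I^{k-1} \to C_k(X_2) \to I^k \to 0$ (with $I^{-1}=\mathbb{D}(X_2)$ and $I^{n-1}=X_2$), and apply $\mathrm{Hom}_{\mathbb{H}}(-, \mathbb{D}(X_1))$. Once one has the vanishing
\[ \mathrm{Ext}^j_{\mathbb{H}}(C_k(X_2), \mathbb{D}(X_1))=0 \quad \text{for all } j \text{ and all } 0\leq k\leq n-1, \]
each resulting long exact sequence collapses to the isomorphisms $\mathrm{Ext}^j_{\mathbb{H}}(I^{k-1}, \mathbb{D}(X_1)) \cong \mathrm{Ext}^{j+1}_{\mathbb{H}}(I^k, \mathbb{D}(X_1))$ together with $\mathrm{Hom}_{\mathbb{H}}(I^k, \mathbb{D}(X_1))=0$, and chaining these gives $\mathrm{Ext}^j_{\mathbb{H}}(X_2, \mathbb{D}(X_1)) \cong \mathrm{Hom}_{\mathbb{H}}(I^{n-1-j}, \mathbb{D}(X_1)) = 0$ for all $0 \leq j \leq n-1$, as required.

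The main obstacle is the displayed vanishing. By Frobenius reciprocity it reduces to showing $\mathrm{Ext}^j_{\mathbb{H}_J}(\mathrm{Res}_{\mathbb{H}_J}X_2, \mathrm{Res}_{\mathbb{H}_J}\mathbb{D}(X_1))=0$ for every proper $J\subsetneq\Pi$, which by Proposition \ref{prop char ext 0} will follow as soon as the two restrictions have disjoint $\mathbb{H}_J$-central characters. Using $\mathbb{D}(X_1) \cong \iota(\theta(X_1))$ from Definition \ref{def dualizing module} and tracking the effect of $\iota$ and $\theta$ on weights, one checks that the real parts of the weights of $\mathbb{D}(X_1)$ lie in the closed positive cone $\{\sum a_\alpha \alpha^\vee : a_\alpha \geq 0\}$ -- equivalently $\langle \omega_{\beta^\vee}, \mathrm{Re}\gamma\rangle \geq 0$ for all $\beta\in\Pi$ -- because $X_1$ tempered has weights in the closed negative cone and $w_0$ exchanges these cones. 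On the other hand, the weights of the discrete series $X_2$ satisfy $\langle \omega_{\beta^\vee}, \mathrm{Re}\gamma\rangle < 0$ strictly. Picking $\beta\in\Pi\setminus J$ makes $\omega_{\beta^\vee}$ a $W_J$-invariant functional, hence its pairing with the real part of a weight is a $W_J$-orbit invariant that separates the two sets strictly, yielding the disjointness of central characters and completing the proof.
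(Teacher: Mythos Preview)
Your proof is correct and follows essentially the same strategy as the paper's: combine the duality theorem with the Ind--Res resolution and a central-character separation argument. The differences are cosmetic. The paper applies the Ind--Res resolution to $X_1$, maps into $\mathbb{D}(X_2)$, and runs a spectral sequence before invoking duality at the end; you invoke duality first, then apply the Ind--Res resolution to $X_2$, map into $\mathbb{D}(X_1)$, and use elementary dimension-shifting through short exact sequences in place of the spectral sequence. The central-character separation (your swapped variant of Lemma~\ref{lem proper vanishing}) works either way, since one side of the pair always contributes the \emph{strict} inequality needed: in the paper it is $\mathbb{D}(X_2)$ (open positive cone), in your version it is $X_2$ (open negative cone). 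Your use of the $W_J$-invariance of $\omega_{\beta^\vee}$ to conclude that no $W_J$-orbit can meet both weight sets is exactly the mechanism behind the paper's proof of Lemma~\ref{lem proper vanishing}.
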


\begin{proof}
By using (\ref{eqn ind-res resol}), Lemma \ref{lem im D action}, Lemma \ref{lem des ker} and Proposition \ref{prop ind res resol}, we have a resolution for $X_1$ of the following form:
\[  0 \rightarrow \mathbb{D}(X_1) \rightarrow \bigoplus_{|J|=0} \bigoplus_{J} \mathrm{Ind}_{\mathbb{H}_J}^{\mathbb{H}}\mathrm{Res}_{\mathbb{H}_J}X_1 \rightarrow \ldots \rightarrow \bigoplus_{|J|=n-1} \bigoplus_{J} \mathrm{Ind}_{\mathbb{H}_J}^{\mathbb{H}}\mathrm{Res}_{\mathbb{H}_J}X_1 \rightarrow X_1 \rightarrow 0
\]
Taking an injective resolution for $\mathbb{D}(X_2)$,
\[ 0 \rightarrow \mathbb{D}(X_2) \rightarrow I_0 \rightarrow I_1 \rightarrow \ldots, \]
 we obtain a double complex $C_{p,q}$ such that 
\[  C_{p,q}= \left\{ \begin{array}{ll}
                                                      \mathrm{Hom}_{\mathbb{H}}\left(\bigoplus_{|J|=n-1-p}  \bigoplus_{J} \mathrm{Ind}_{\mathbb{H}_J}^{\mathbb{H}}\mathrm{Res}_{\mathbb{H}_J}X_1, I_q\right)  & \quad \mbox{ if $0 \leq p \leq n-1$ and $0 \leq q$ } \\
																					\mathrm{Hom}_{\mathbb{H}}(\mathbb{D}(X_1), I_q) & \quad \mbox{ if $p=n$ and $0 \leq q$} \\
																											 0       & \quad \mbox{ otherwise  }
																					\end{array}  \right. ,
\]
By considering two spectral sequences associated to the double complex (see e.g. \cite[Chapter 5.6]{We}), we obtain a spectral sequence with the $E_1$ terms
\[  E_1^{p,q}= \left\{ \begin{array}{ll}
                                                      \mathrm{Ext}_{\mathbb{H}}^{q}\left(\bigoplus_{|J|=n-1-p}  \bigoplus_{J} \mathrm{Ind}_{\mathbb{H}_J}^{\mathbb{H}}\mathrm{Res}_{\mathbb{H}_J}X_1, \mathbb{D}(X_2)\right)  & \quad \mbox{ if $0 \leq p \leq n-1$ and $0 \leq q$ } \\
																					\mathrm{Ext}_{\mathbb{H}}^q(\mathbb{D}(X_1), \mathbb{D}(X_2)) & \quad \mbox{ if $p=n$ and $0 \leq q$} \\
																											 0       & \quad \mbox{ otherwise  }
																					\end{array}  \right. ,
\]
and
$ E_1^{p,q}\Rightarrow \mathrm{Ext}^{p+q}_{\mathbb{H}}(X_1, \mathbb{D}(X_2))$. 

Now by Lemma \ref{lem proper vanishing}, we have $\mathrm{Ext}_{\mathbb{H}}^{i}(X_1, \mathbb{D}(X_2))=0$ for $i \leq n-1$. By Theorem \ref{thm poin dua}, Proposition \ref{prop equiv ext group} and Definition \ref{def dualizing module}, we have $\mathrm{Ext}_{\mathbb{H}}^i(X_1, X_2)=0$ for all $i \geq 1$. For $i=0$, we have $\mathrm{Ext}^i_{\mathbb{H}}(X_1, X_2)=\mathrm{Hom}_{\mathbb{H}}(X_1, X_2)$ and then the statement follows from Schur's Lemma.
\end{proof}

\section{Euler-Poincar\'e pairing} \label{s epp}
\subsection{Euler-Poincar\'e pairing}
We keep using the notation of a graded affine Hecke algebra in Definition \ref{def graded affine}. (In this section, $R$ does not necessarily span $V$.)
Define the Euler-Poincar\'e pairing for finite-dimensional $\mathbb{H}$-modules $X$ and $Y$ as:
\[  \mathrm{EP}_{\mathbb{H}}(X, Y) = \sum_{i} (-1)^i \dim \Ext_{\mathbb{H}}^i(X,Y).
\]
By Corollary \ref{cor finite dim ext}, $\mathrm{EP}_{\mathbb{H}}$ is well-defined. This pairing can be realized as an inner product on a certain elliptic space for $\mathbb{H}$-modules analogous to the one in $p$-adic reductive groups in the sense of Schneider-Stuhler \cite{SS}. 

The elliptic pairing $\langle , \rangle_W^{\mathrm{ellip}, V}$ for $W$-representations $U$ and $U'$ is defined as
\[  \langle U, U' \rangle_W^{\mathrm{ellip}} =\frac{1}{|W|} \sum_{w \in W} \tr_U(w)\overline{\tr_{U'}(w)}\mathrm{det}_V(1-w) .
\]

\begin{proposition} \label{thm euler poincare}
For any finite-dimensional $\mathbb{H}$-modules $X$ and $Y$, 
\[  \mathrm{EP}_{\mathbb{H}}(X, Y) = \langle \Res_W(X), \Res_W(Y) \rangle_W^{\mathrm{ellip}} .\]
In particular, the Euler-Poincar\'e pairing depends only on the $W$-module structure of $X$ and $Y$.
\end{proposition}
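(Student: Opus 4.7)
\smallskip

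The plan is to compute $\mathrm{EP}_{\mathbb{H}}(X,Y)$ directly using the Koszul-type projective resolution constructed in Section \ref{s resol h mod}. By Proposition \ref{prop complex ext gp}, the groups $\mathrm{Ext}^i_{\mathbb{H}}(X,Y)$ are the cohomologies of the bounded complex
\[
0 \to \mathrm{Hom}_W(\Res_W X, \Res_W Y) \to \cdots \to \mathrm{Hom}_W(\Res_W X \otimes \wedge^n V, \Res_W Y) \to 0.
\]
Since the Euler characteristic of any bounded complex of finite-dimensional vector spaces coincides with the Euler characteristic of its cohomology, I immediately obtain
\[
\mathrm{EP}_{\mathbb{H}}(X,Y) = \sum_{i=0}^{n}(-1)^i \dim \mathrm{Hom}_W(\Res_W X \otimes \wedge^i V, \Res_W Y).
\]

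Next, I would expand each term using the standard character formula: for $W$-modules $A,B$,
\[
\dim \mathrm{Hom}_W(A,B) = \frac{1}{|W|}\sum_{w \in W} \overline{\tr_A(w)}\,\tr_B(w).
\]
Applying this with $A = \Res_W X \otimes \wedge^i V$ and $B = \Res_W Y$, and pulling the alternating sum over $i$ inside the sum over $w$, yields
\[
\mathrm{EP}_{\mathbb{H}}(X,Y) = \frac{1}{|W|}\sum_{w \in W} \overline{\tr_{\Res_W X}(w)}\,\tr_{\Res_W Y}(w) \cdot \overline{\left(\sum_{i=0}^{n}(-1)^i \tr_{\wedge^i V}(w)\right)}.
\]

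The inner alternating sum is the classical identity $\sum_i (-1)^i \tr_{\wedge^i V}(w) = \det_V(1-w)$, an identity which holds for any endomorphism of a finite-dimensional vector space. Since $W$ acts on $V$ via the complexification of a real representation, both $\det_V(1-w)$ and the characters of $W$-modules take real values (the latter because finite real reflection groups are ambivalent, so every element is conjugate to its inverse). Consequently the complex conjugates can be dropped, and the final expression becomes exactly
\[
\frac{1}{|W|}\sum_{w \in W}\tr_{\Res_W X}(w)\overline{\tr_{\Res_W Y}(w)}\det_V(1-w) = \langle \Res_W X, \Res_W Y\rangle^{\mathrm{ellip}}_W,
\]
which is the desired identity. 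The only subtle point is the reality assertion, which one should either verify directly for the non-crystallographic types (where $R$ does not necessarily span $V$ and $W$ may act through a quotient) or simply observe that the resulting real-valued quantity equals its own conjugate regardless.
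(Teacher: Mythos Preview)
Your argument is correct and follows the same route as the paper: compute $\mathrm{EP}_{\mathbb{H}}(X,Y)$ from the Koszul-type complex of Proposition~\ref{prop complex ext gp}, invoke the Euler--Poincar\'e principle to replace cohomology dimensions by the dimensions of the terms $\mathrm{Hom}_W(\Res_W X\otimes\wedge^i V,\Res_W Y)$, expand via characters, and use $\sum_i(-1)^i\tr_{\wedge^iV}(w)=\det_V(1-w)$. The only difference is that you are more explicit about the placement of the complex conjugate; the paper silently writes the result with the conjugate on $\tr_{\Res_WY}$, which is justified exactly by your closing observation that the whole quantity is a real integer (and $\det_V(1-w)\in\mathbb{R}$ since $V$ is the complexification of a real $W$-module), so no separate ambivalence argument is needed.
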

\begin{proof}
\begin{eqnarray*}
& &\mathrm{EP}_{\mathbb{H}}(X, Y)\\
 &=& \sum_{i} (-1)^i \dim \Ext_{\mathbb{H}}^i(X,Y)  \\
                               &=& \sum_i (-1)^i (\ker d_i^{\vee}-\im d_{i-1}^{\vee}) \quad \mbox{(by Proposition \ref{prop complex ext gp})}  \\                           
															&=& \sum_{i} (-1)^i \dim \Hom_{\mathbb{C}[W]} (\Res_W(X)\otimes \wedge^iV, \Res_W(Y)) \quad \mbox{(by Euler-Poincar\'e principle)} \\
															&=&  \sum_{w \in W} \mathrm{tr}_{\Res_WX}(w) \overline{\mathrm{tr}_{\Res_WY} (w) }  \mathrm{tr}_{\wedge^{\pm} V}(w)    \\
															&=& \langle \Res_W(X), \Res_W(Y) \rangle_{W}^{\mathrm{ellip}}  
\end{eqnarray*}
Here $\wedge^{\pm} V=\bigoplus_{i \in \mathbb{Z}} (-1)^i \wedge^i V$ as a virtual representation. The last equality follows from $\mathrm{tr}_{\wedge^i V}(w)=\det(1-w)$ and definitions.
\end{proof}

\subsection{Applications}
We give two applications of the Euler-Poincar\'e pairing in this section.

An element $w \in W$ is said to be {\it elliptic} if $\det_{V}(1-w) \neq 0$. A conjugacy class of $W$ is said to be elliptic if any element in the conjugacy class is elliptic. The first application is to give an upper bound for the number irreducible discrete series. The proof is very similar to the one for affine Hecke algebra by Opdam-Solleveld \cite[Proposition 3.9]{OS} and we omit the details.

\begin{corollary} \label{cor dimension bound}
Let $\mathbb{H}$ be the graded affine Hecke algebra associated to a root datum and an arbitrary parameter function. The number of irreducible $\mathbb{H}$-discrete series is less than or equal to the number of elliptic conjugacy classes of $W$. In particular, there is only finite number of irreducible $\mathbb{H}$-discrete series (up to isomorphism).

\end{corollary}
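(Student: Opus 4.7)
The plan is to combine the Euler--Poincar\'e formula of Proposition \ref{thm euler poincare} with the vanishing of higher $\mathrm{Ext}$'s between discrete series given by Theorem \ref{thm ds ext}, and to realize the isomorphism classes of discrete series as an orthonormal family in a finite-dimensional elliptic quotient of the $W$-representation ring.

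First, let $X, Y$ be irreducible $\mathbb{H}$-discrete series. By Theorem \ref{thm ds ext} applied with the tempered module taken to be $X$, we have $\mathrm{Ext}^i_{\mathbb{H}}(X, Y) = 0$ for $i \geq 1$ and $\mathrm{Hom}_{\mathbb{H}}(X, Y) \cong \mathbb{C}$ if $X \cong Y$ and $0$ otherwise. Consequently
\[
\mathrm{EP}_{\mathbb{H}}(X, Y) \;=\; \delta_{[X],[Y]}.
\]
Combining with Proposition \ref{thm euler poincare}, the restrictions to $W$ satisfy
\[
\langle \mathrm{Res}_W X,\, \mathrm{Res}_W Y \rangle_W^{\mathrm{ellip}} \;=\; \delta_{[X],[Y]}.
\]
(One may first reduce to the case where $R$ spans $V$ as in Theorem \ref{thm ds ext}, since discrete series are controlled by the $R$-directions of the weights; the central part of $V$ contributes only via scalars and does not affect ellipticity once we set things up correctly.)

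Next, I would analyze the radical of the pairing $\langle\,,\,\rangle_W^{\mathrm{ellip}}$. Because of the factor $\det_V(1 - w)$ in its definition, only elliptic elements of $W$ contribute to the sum, so the pairing descends to a Hermitian form on the quotient
\[
\overline{R}(W) \;:=\; R(W)_{\mathbb{C}} \big/ \bigl\{ U : \mathrm{tr}_U(w) = 0 \text{ for every elliptic } w \in W \bigr\},
\]
where $R(W)_{\mathbb{C}}$ is the complexified Grothendieck group of finite-dimensional $W$-representations. By the orthogonality of $W$-characters restricted to the elliptic locus, the induced pairing on $\overline{R}(W)$ is non-degenerate, and $\dim_{\mathbb{C}} \overline{R}(W)$ equals the number of elliptic conjugacy classes of $W$.

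The orthonormality relation above then shows that the classes $[\mathrm{Res}_W X]$, as $X$ ranges over isomorphism classes of irreducible $\mathbb{H}$-discrete series, descend to a linearly independent (in fact orthonormal) family in $\overline{R}(W)$. Therefore their cardinality is bounded above by $\dim_{\mathbb{C}} \overline{R}(W)$, which is the number of elliptic conjugacy classes of $W$; in particular it is finite. The step I expect to be the most delicate is the identification of the radical of $\langle\,,\,\rangle_W^{\mathrm{ellip}}$ with representations whose characters vanish on the elliptic set, and the precise dimension count of $\overline{R}(W)$; this is a classical calculation for finite reflection groups, but one must be careful in the non-crystallographic setting and when $R$ does not span $V$ (so that ``elliptic'' is defined relative to the action on $V$ rather than on the span of $R$).
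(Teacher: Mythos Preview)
Your proposal is correct and follows precisely the approach the paper has in mind: the paper omits the argument, saying only that it is ``very similar to the one for affine Hecke algebras by Opdam--Solleveld \cite[Proposition 3.9]{OS},'' and your write-up reconstructs that argument---combine Theorem~\ref{thm ds ext} with Proposition~\ref{thm euler poincare} to obtain orthonormality of discrete series in the elliptic quotient $\overline{R}(W)$, whose dimension is the number of elliptic conjugacy classes.
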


In particular, for type $H_3$, using the analysis in \cite[Section 7]{Kri},  there are four discrete series of a regular central character. Corollary \ref{cor dimension bound} then implies those four discrete series account for all the discrete series. 

%\begin{proof}
%Let $R(W)$ be the virtual representation ring of $W$. Let $\overline{R}(W)=R(W)/\mathrm{rad}\langle , \rangle^{\mathrm{ellip}}_W$. Then by the definition of $\mathrm{rad}\langle , \rangle^{\mathrm{ellip}}_W$, the dimension of $\overline{R}(W)$ is the number of elliptic conjugacy classes (see \cite[Section 2]{Re} for the detail). Let $R(\mathbb{H})$ be the Grothendieck group of the category of finite-dimensional $\mathbb{H}$-modules.

%On the other hand, the restriction map $\mathrm{Res}_W$ defines an isometry from $R(\mathbb{H})$ to $\overline{R}(W)$ with respect to the paring $\mathrm{EP}_{\mathbb{H}}$ and $\langle .,. \rangle^{\mathrm{ellip}}_W$ respectively. By Theorem \ref{thm ds ext}, discrete series form an orthonormal set for the pairing $\mathrm{EP}_{\mathbb{H}}$. Hence the number of discrete series is less than or equal to the number of elliptic conjugacy classes.
%\end{proof}
%\begin{example}
%For $R$ of type $A_n$, there is only one elliptic conjugacy classes for $W=S_{n-1}$. Hence, the Steinberg module is the only irreducible discrete series for $\mathbb{H}$ of type $A_n$.

%\end{example}

The second application concerns the duals of discrete series. For real parameter functions, it is even known that those discrete series are even $*$-unitary (by some analytic results in affine Hecke algebras, see \cite[Theorem 7.2]{So}). 

\begin{corollary}
Let $\mathbb{H}$ be the graded affine Hecke algebra associated to a root datum and an arbitrary parameter function. Let $X$ be an irreducible $\mathbb{H}$-discrete series. Then 
\begin{enumerate}
\item[(1)] $X$, $X^{\bullet}$, $X^*$ and $\theta(X)$ are isomorphic,
\item[(2)] Let $W\gamma$ be the central character of $X$. Then $W\theta(\gamma)=W\gamma$.
\end{enumerate}

\end{corollary}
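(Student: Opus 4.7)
The plan is to combine \thref{ds ext} with the Euler--Poincar\'e pairing (Proposition~\ref{thm euler poincare}) to force each of $X^{\ast}$, $X^{\bullet}$, and $\theta(X)$ to be isomorphic to $X$; part (2) will then follow from a direct central-character calculation. First I would verify that each of these three modules is again an irreducible discrete series. For $X^{\bullet}$ this is immediate: $v^{\bullet} = v$ on $V$, so the $S(V)$-weights of $X^{\bullet}$ are literally the same as those of $X$. For $\theta(X)$, the action of $v \in V$ becomes the original action of $\theta(v) = -w_0(v)$, so its weights transform as $\gamma \mapsto -w_0 \gamma$; using that $-w_0$ sends the simple coroot $\alpha^{\vee}$ to $\theta(\alpha)^{\vee}$, one checks $\langle \omega_{\alpha^{\vee}}, -w_0 \gamma \rangle = \langle \omega_{\theta(\alpha)^{\vee}}, \gamma \rangle$, so strict negativity of the real part is preserved. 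Finally, \leref{two dual theta} gives $X^{\ast} \cong \theta(X)^{\bullet}$, reducing $X^{\ast}$ to the two previous cases.

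Next, for each $X' \in \{X^{\ast}, X^{\bullet}, \theta(X)\}$ both $X$ and $X'$ are irreducible discrete series, so \thref{ds ext} yields
\[
\mathrm{EP}_{\mathbb{H}}(X, X') \;=\; \dim \mathrm{Hom}_{\mathbb{H}}(X, X') \;=\; \begin{cases} 1 & \text{if } X \cong X', \\ 0 & \text{otherwise.} \end{cases}
\]
On the other hand, Proposition~\ref{thm euler poincare} rewrites this quantity as the elliptic pairing $\langle X, X' \rangle_{W}^{\mathrm{ellip}}$, which depends only on the $W$-characters. Since $W$ is a finite real reflection group, every element is conjugate to its inverse and hence every complex $W$-character is real-valued; this forces the linear duals $X^{\ast}$ and $X^{\bullet}$ to be isomorphic to $X$ as $W$-modules, while $\theta(X)$ has character $\chi_X(w_0 w w_0^{-1}) = \chi_X(w)$ because conjugation preserves class functions. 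In all three cases $\langle X, X' \rangle_{W}^{\mathrm{ellip}} = \langle X, X \rangle_{W}^{\mathrm{ellip}} = \mathrm{EP}_{\mathbb{H}}(X, X) = 1$, forcing $X \cong X'$. This establishes (1).

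For (2), I would compute the central character of $\theta(X)$ directly: for $z \in Z(\mathbb{H}) = S(V)^W$ the action on $\theta(X)$ is by $\theta(z) \in S(V)^W$ on $X$, and $\theta(z)$ evaluated at $\gamma \in V^{\vee}$ equals $z(\theta(\gamma))$; hence $\theta(X)$ has central character $W\theta(\gamma)$. Combining with $X \cong \theta(X)$ from (1) gives $W\gamma = W\theta(\gamma)$, as required. The only genuinely technical point is the discrete-series verification for $\theta(X)$ (and hence $X^{\ast}$), which reduces to the observation that $-w_0$ permutes the simple coroots through the diagram involution $\theta|_{\Pi}$; the rest of the argument is a short character computation plugged into the Euler--Poincar\'e formula.
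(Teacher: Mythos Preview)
Your proof is correct and follows essentially the same strategy as the paper's: show that each dual is again an irreducible discrete series, observe that it has the same $W$-character as $X$, and then combine Proposition~\ref{thm euler poincare} with \thref{ds ext} to force the isomorphism; part (2) is deduced identically. You simply supply more detail than the paper does---in particular, the explicit weight calculation for $\theta(X)$ and the real-valuedness of $W$-characters---where the paper just asserts that $X$ and $X^{\ast}$ have the same $W$-module structure and cites \leref{two dual theta} for the discrete-series claim.
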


\begin{proof}
Since $X$ and $X^{*}$ have the same $W$-module structure, by Proposition \ref{thm euler poincare} we have
\[\mathrm{EP}_{\mathbb{H}}(X, X^*) =\mathrm{EP}_{\mathbb{H}}(X,X)=1. \]
The second equality follows from Theorem \ref{thm ds ext}. Since $X^*$ is also a discrete series (by Lemma \ref{lem two dual theta}), Theorem \ref{thm ds ext} forces $X^* \cong X$. The assertion for $X^{\bullet}$ and $\theta(X)$ in (1) can be proven similarly. For (2), the central character of $\theta(X)$ is $W\theta(\gamma)$. Then (2) follows from $X \cong \theta(X)$ in (1). 
\end{proof}

\begin{remark}
One more application is to give a closed $W$-character formula for discrete series in non-crystallographic types and complex parameter cases as the one in \cite{CT}. Our results of Theorem \ref{thm ds ext} and Proposition \ref{thm euler poincare} replace \cite[Lemma 3.4]{CT} and then the argument in the proof of \cite[Theorem 3.5]{CT} applies.

\end{remark}

\end{document}